\newcommand{\noun}[1]{\textsc{#1}}
\theoremstyle{plain}
\newtheorem{thm}{\protect\theoremname}
  \theoremstyle{plain}
  \newtheorem{prop}[thm]{\protect\propositionname}
  \theoremstyle{definition}
  \newtheorem{defn}[thm]{\protect\definitionname}
  \theoremstyle{remark}
  \newtheorem{rem}[thm]{\protect\remarkname}
  \theoremstyle{definition}
  \newtheorem{example}[thm]{\protect\examplename}
  \theoremstyle{plain}
  \newtheorem{lem}[thm]{\protect\lemmaname}
\newcommand{\one}{\mathds 1}  % \usepackage[sans]{dsfont}
\newcommand{\AVaR}{{\sf AV@R}}
\newcommand{\VaR}{{\sf V@R}}
\newcommand{\sign}{{\operatorname{sign}}}
\newcommand{\id}{{\operatorname{id}}}
\newcommand{\essinf}{\operatornamewithlimits{ess\,inf}}
\newcommand{\esssup}{\operatornamewithlimits{ess\,sup}}
  \providecommand{\definitionname}{Definition}
  \providecommand{\examplename}{Example}
  \providecommand{\lemmaname}{Lemma}
  \providecommand{\propositionname}{Proposition}
  \providecommand{\remarkname}{Remark}
\providecommand{\theoremname}{Theorem}
\begin{document}

\title{The Natural Banach Space for Version Independent Risk Measures}

\author{Alois Pichler~%
\thanks{Norwegian University of Science and Technology.\protect \\
Contact: \protect\url{alois.pichler@iot.ntnu.no}%
}}
\maketitle
\begin{abstract}
Risk measures, or coherent measures of risk are often considered on
the space $L^{\infty}$, and important theorems on risk measures build
on that space. Other risk measures, among them the most important
risk measure---the Average Value-at-Risk---are well defined on the
larger space $L^{1}$ and this seems to be the natural domain space
for this risk measure. Spectral risk measures constitute a further
class of risk measures of central importance, and they are often considered
on some $L^{p}$ space. But in many situations this is possibly unnatural,
because any $L^{p}$ with $p>p_{0}$, say, is suitable to define the
spectral risk measure as well. In addition to that risk measures have
also been considered on Orlicz and Zygmund spaces. So it remains for
discussion and clarification, what the natural domain to consider
a risk measure is? 

This paper introduces a norm, which is built from the risk measure,
and a Banach space, which carries the risk measure in a natural way.
It is often strictly larger than its original domain, and obeys the
key property that the risk measure is finite valued and continuous
on that space in an elementary and natural way. 

\textbf{Keywords:} Risk Measures, Rearrangement Inequalities, Stochastic
Dominance, Dual Representation\\
\textbf{Classification:} 90C15, 60B05, 62P05
\end{abstract}

\section{Introduction}

This paper addresses coherent measures of risk (risk measures, for
short) and the natural domain (the natural space), where they can
be considered. Coherent measures of risk have been introduced in the
seminal paper \cite{Artzner1999} in an axiomatic way and have been
investigated in a series of subsequent papers in mathematical finance
since then. In the actuarial literature, however, risk measures and
axiomatic treatments have been considered already earlier, for example
in Denneberg (\cite{Denneberg1989}) and in this journal by Wang et
al. (\cite{Wang1997}). 

We state the axioms (cf.~\cite{Artzner1997}) for a convex risk measure
$\rho$, mapping $\mathbb{R}$\nobreakdash-valued random variables
into the real numbers $\mathbb{R}$ or to $+\infty$. Here, the initial
axioms have been adapted to follow the interpretation of loss instead
of profit---the common modification in insurance---in the usual and
appropriate way.\pagebreak[2]
\begin{enumerate}[label=(M)]
\item \emph{\noun{Monotonicity: \label{enu:Monotonicity} }}$\rho\left(Y_{1}\right)\le\rho\left(Y_{2}\right)$
whenever $Y_{1}\le Y_{2}$ almost surely;
\end{enumerate}

\begin{enumerate}[label=(H)]
\item \emph{\noun{Positive~homogeneity: \label{enu:Homogeneity} }}$\rho\left(\lambda Y\right)=\lambda\rho\left(Y\right)$
whenever $\lambda>0$;
\end{enumerate}

\begin{enumerate}[label=(C)]
\item \emph{\noun{Convexity: \label{enu:Convexity} }}$\rho\left(\left(1-\lambda\right)Y_{0}+\lambda Y_{1}\right)\le\left(1-\lambda\right)\rho\left(Y_{0}\right)+\lambda\rho\left(Y_{1}\right)$
for $0\le\lambda\le1$;
\end{enumerate}

\begin{enumerate}[label=(T)]
\item \emph{\noun{Translation~equivariance}}%
\footnote{In an economic or monetary environment this is often called \emph{\noun{Cash
invariance}} instead.%
}\emph{\noun{: \label{enu:Equivariance} }}$\rho\left(Y+c\right)=\rho\left(Y\right)+c$
if $c\in\mathbb{R}$.
\end{enumerate}
The main observation in this paper starts with the fact that the risk
measure $\rho$ can be associated in a natural way with a seminorm,
which is a norm in important cases. It is an elementary property that
the risk measure is continuous with respect to the norm introduced.

We investigate this new norm for specific risk measures, starting
with spectral risk measures. It turns out that the domain, where the
spectral risk measure can be defined in a meaningful way, is always
strictly larger than $L^{\infty}$. The respective space is a Banach
space, and we study its topology, which can be compared with $L^{p}$
spaces. However, the topology always differs from the topology of
an $L^{p}$ space (cf. \cite{Filipovic2012}).

A risk measure $\rho$---being a convex function---has a convex conjugate
function, and the Fenchel--Moreau theorem allows recovering the initial
function, the initial risk measure $\rho$ in our situation. The convex
conjugate function involves the dual of the initial space, for this
reason it is essential to understand the dual of the Banach space
associated with the risk measure. The norm on the dual space measures
the growth of the random variable by involving second order stochastic
dominance relations.

\medskip{}

It is elaborated moreover in this paper that a risk measure cannot
be defined in a meaningful way on a space larger than $L^{1}$.

\subsection*{The domain and the co-domain of spectral risk measures}

The axioms characterizing risk measures have been stated above without
giving the domain and the co-domain precisely. Indeed, important results
are well known when considering $\rho$ as a function on $L^{\infty}$,
$\rho:\, L^{\infty}\to\mathbb{R}$: the results include Kusuoka's
representation (cf.~\cite{Kusuoka} and \eqref{eq:Kusuoka} below)
and results on continuity. We state the following example.
\begin{prop}
\label{prop:LInfty}Every $\mathbb{R}$\nobreakdash-valued risk measure
$\rho$ on $L^{\infty}$ is Lipschitz-continuous with constant $1$,
it satisfies $\left|\rho\left(Y_{2}\right)-\rho\left(Y_{1}\right)\right|\le\left\Vert Y_{2}-Y_{1}\right\Vert _{\infty}$. \end{prop}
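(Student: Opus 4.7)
The plan is to give a short direct argument that invokes only two of the four axioms, namely monotonicity \ref{enu:Monotonicity} and translation equivariance \ref{enu:Equivariance}; convexity and positive homogeneity are not needed. The fact that $Y_1,Y_2\in L^{\infty}$ is used only to ensure that $c:=\lVert Y_{2}-Y_{1}\rVert_{\infty}$ is a finite real constant that can legitimately appear in the translation axiom.

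First I would observe that, by the definition of the $L^{\infty}$\nobreakdash-norm, the pointwise bound
\[
Y_{2}\;\le\;Y_{1}+\lVert Y_{2}-Y_{1}\rVert_{\infty}
\]
holds almost surely. Applying \ref{enu:Monotonicity} and then \ref{enu:Equivariance} (with $c=\lVert Y_{2}-Y_{1}\rVert_{\infty}\in\mathbb{R}$) yields
\[
\rho(Y_{2})\;\le\;\rho\bigl(Y_{1}+\lVert Y_{2}-Y_{1}\rVert_{\infty}\bigr)\;=\;\rho(Y_{1})+\lVert Y_{2}-Y_{1}\rVert_{\infty}.
\]
This gives one direction of the desired Lipschitz estimate. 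Interchanging the roles of $Y_{1}$ and $Y_{2}$ produces the reverse inequality, and combining the two bounds yields $\lvert\rho(Y_{2})-\rho(Y_{1})\rvert\le\lVert Y_{2}-Y_{1}\rVert_{\infty}$.

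There is no real obstacle in the argument; the only subtlety worth flagging is the standing assumption that $\rho$ is $\mathbb{R}$\nobreakdash-valued on $L^{\infty}$, because otherwise the additive shift in \ref{enu:Equivariance} could produce indeterminate forms like $+\infty-\infty$. Since $L^{\infty}$ is precisely the space on which $\lVert Y_{2}-Y_{1}\rVert_{\infty}<\infty$ for arbitrary pairs, this finiteness assumption matches the domain and the proof goes through without further work. The same template, with $\lVert\cdot\rVert_{\infty}$ replaced by a suitable seminorm coming from $\rho$ itself, will later motivate the definition of the natural Banach space studied in the paper.
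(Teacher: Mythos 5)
Your proof is correct: the bound $Y_{2}\le Y_{1}+\left\Vert Y_{2}-Y_{1}\right\Vert _{\infty}$ combined with monotonicity, translation equivariance, and symmetrization is exactly the standard argument, and it is the one found in the reference (F\"ollmer--Schied, Lemma 4.3) to which the paper delegates this proof rather than writing it out. Nothing is missing, and your remark that only these two axioms are needed is accurate.
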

\begin{proof}
See, e.g., \cite[Lemma 4.3]{Follmer2004} for a proof.
\end{proof}
In many situations, for example when considering the trivial risk
measure $\rho\left(\cdot\right):=\mathbb{E}\left(\cdot\right)$ or
the Average Value-at-Risk, the domain $L^{\infty}$ is not satisfactory
large enough, the domain $L^{1}$ is perhaps more natural and convenient
to consider in this situation.

Depending on the domain chosen for a risk measure, the co-domain is
often specified to be $\mathbb{R}$, or the extended reals $\mathbb{R}\cup\left\{ \infty\right\} $,
in some publications even $\mathbb{R}\cup\left\{ \infty,\,-\infty\right\} $.
In this context it should be emphasized that there is an intimate
relationship between the properties \emph{continuity} of a risk measure
and its \emph{range}, the following important result clarifies the
connections: 
\begin{prop}
Consider a $\mathbb{R}\cup\left\{ \infty\right\} $\nobreakdash-valued,
lsc. risk measure $\rho$ defined on $L^{p}$, $1\le p<\infty$, satisfying
\ref{enu:Monotonicity}, \ref{enu:Convexity} and \ref{enu:Equivariance}.
Suppose further that $\left\{ \rho<\infty\right\} $ has a nonempty
interior. Then $\rho$ is finite valued and continuous on the entire
$L^{p}$.
\end{prop}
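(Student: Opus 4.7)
The plan has two stages. I would first establish that $\rho$ is bounded above on some open ball inside its effective domain and hence continuous there; then I would use monotonicity together with translation equivariance to propagate finiteness from that ball to all of $L^{p}$. Stage one is standard: by hypothesis some open ball $B(Y_{0},r)$ lies inside $\{\rho<\infty\}$, each sublevel set $\{\rho\le n\}$ is closed by lower semicontinuity, and the sets $\{\rho\le n\}\cap\overline{B}(Y_{0},r/2)$ cover the complete metric space $\overline{B}(Y_{0},r/2)$, so the Baire category theorem produces an integer $M$ and, after possibly shrinking and recentring, an open ball---call it again $B(Y_{0},r)$---on which $\rho\le M$. A convex functional bounded above on a nonempty open subset of a Banach space is automatically continuous there, so continuity on $B(Y_{0},r)$ follows.

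The heart of the argument is stage two. Given an arbitrary $Y\in L^{p}$, my strategy is to dominate $Y$ almost surely by an element that, after translation by a sufficiently large constant, lies in $B(Y_{0},r)$. Since $(Y-Y_{0}-c)_{+}$ is dominated by $(Y-Y_{0})_{+}\in L^{p}$ and tends pointwise to $0$ as $c\to\infty$, dominated convergence---where the assumption $p<\infty$ enters essentially---produces some $c>0$ with $\|(Y-Y_{0}-c)_{+}\|_{p}<r$. The candidate majorant
\[
Y':=Y_{0}+c+(Y-Y_{0}-c)_{+}
\]
then satisfies $\|Y'-(Y_{0}+c)\|_{p}<r$, so by \ref{enu:Equivariance} one obtains $\rho(Y')\le M+c<\infty$; and $Y'\ge Y$ almost surely because $(\cdot)_{+}\ge(\cdot)$. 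Monotonicity \ref{enu:Monotonicity} therefore forces $\rho(Y)\le\rho(Y')<\infty$, showing $\{\rho<\infty\}=L^{p}$. Since $\rho$ is already continuous at the interior point $Y_{0}$ by stage one, the standard fact that a convex function is continuous throughout the interior of its domain as soon as it is continuous at a single interior point delivers continuity of $\rho$ on the whole of $L^{p}$.

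The main obstacle is the construction of $Y'$: for an unbounded $Y\in L^{p}$ one cannot shift by a scalar alone to land in a fixed ball, so the idea is to split the required displacement into a scalar part $c$ (absorbed by \ref{enu:Equivariance}) and a small $L^{p}$-perturbation $(Y-Y_{0}-c)_{+}$ (absorbed by the ball). Monotonicity \ref{enu:Monotonicity} is what converts the almost-sure domination $Y'\ge Y$ into the numerical inequality $\rho(Y)\le\rho(Y')$; without it, axioms \ref{enu:Convexity} and \ref{enu:Equivariance} alone would not suffice to push finiteness beyond the interior of the domain.
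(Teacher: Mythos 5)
Your proof is correct and complete: the Baire-category argument plus local boundedness of a convex function yields continuity on a ball inside $\left\{ \rho<\infty\right\} $, and the majorant $Y'=Y_{0}+c+\left(Y-Y_{0}-c\right)_{+}$, combined with \ref{enu:Equivariance} and \ref{enu:Monotonicity} (and dominated convergence, which is where $p<\infty$ enters), propagates finiteness to all of $L^{p}$, after which continuity everywhere follows from the standard propagation property of convex functions. The paper itself supplies no proof but defers to the cited references of Ruszczy\'nski and Shapiro, and your argument is essentially the standard one given there, so no changes are needed.
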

The proof is contained in \cite{Ruszczynski2006} and in \cite{RuszczynskiShapiro2009},
Proposition 6.7. The preceding discussion of the latter reference
also contains the following reformulation of the statement, which
is more striking: A risk measure satisfying \ref{enu:Monotonicity},
\ref{enu:Convexity} and \ref{enu:Equivariance} is either finite
valued and continuous on the \emph{entire} $L^{p}$, or it takes the
value $+\infty$ on a dense subset.

\medskip{}

Both results suggest to consider $\mathbb{R}$ (i.e. $\mathbb{R}\backslash\left\{ \pm\infty\right\} $)
valued risk measures solely, because these are precisely the finite
valued and continuous risk measures.

\medskip{}

Outline of the paper: The following Section~\ref{sec:Norm} introduces
the associated norm and elaborates its elementary property. The subsequent
section, Section~\ref{sec:LSigma}, addresses an elementary risk
measure, the spectral risk measure. This risk measure is elementary,
as every version independent risk measure can be built from spectral
risk measures. 

A space is introduced, which we call the space of \emph{natural domain},
which is as large as possible to carry a spectral risk measure. It
is verified that the associated space is a Banach space. The new norm
can be used in a natural way to extend the domain of elementary risk
measures, and it is elaborated which $L^{p}$ spaces the space of
natural domain comprises. 

This section contains moreover the remarkable result, that there is
no finite valued risk measure on a space larger than $L^{1}$.

We study further the topological dual of the Banach space introduced
(Section~\ref{sec:LDual}). It turns out the dual norm can be characterized
by use of the Average Value-at-Risk, the simplest risk measure, and
by second order stochastic dominance. The investigations are pushed
further to more general risk measures, and an even more general Banach
space to carry a general risk measure is highlighted in Section~\ref{sec:LGeneral}.

\section{\label{sec:Norm}The norm associated with a risk measure}

The results presented in this paper start along with the observation
that a risk measure $\rho$ induces a (semi-)norm in the following
elementary way. 
\begin{defn}
Let $L$ be a vector space of $\mathbb{R}$\nobreakdash-valued random
variables on $\left(\Omega,\mathcal{F},P\right)$ and $\rho:\, L\to\mathbb{R}\cup\left\{ -\infty,\infty\right\} $
be a risk measure. Then 
\[
\left\Vert \cdot\right\Vert _{\rho}:=\rho\left(\left|\cdot\right|\right)
\]
is called \emph{associated norm}, associated with the risk measure
$\rho$. \end{defn}
\begin{rem}
If no confusion may occur we shall simply write $\left\Vert \cdot\right\Vert $
to refer to $\left\Vert \cdot\right\Vert _{\rho}$.
\end{rem}
The following proposition verifies that $\left\Vert \cdot\right\Vert _{\rho}$
is indeed a seminorm on the appropriate vector space. 
\begin{prop}[Finiteness, and the seminorm property]
\label{prop:Finiteness}Let $\rho$ be a risk measure on a vector
space of $\mathbb{R}$\nobreakdash-valued random variables. Then
$\left\Vert \cdot\right\Vert =\rho\left(\left|\cdot\right|\right)$
is a seminorm on $L:=\left\{ Y:\,\rho\left(\left|Y\right|\right)<\infty\right\} $
and $\rho$ is finite valued on $L$.\end{prop}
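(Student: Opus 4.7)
The plan is to verify the three seminorm axioms for $\left\Vert \cdot\right\Vert =\rho(|\cdot|)$ on $L$ directly from the defining axioms \ref{enu:Monotonicity}--\ref{enu:Equivariance}, and then deduce finiteness of $\rho$ on $L$ from a sandwiching argument built on the pointwise bound $-|Y|\le Y\le|Y|$.

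The preliminary step is to pin down $\rho(0)=0$. Positive homogeneity applied to $\lambda=2$ gives $\rho(0)=2\rho(0)$, so a priori $\rho(0)\in\{0,\pm\infty\}$. Assuming $L$ is non-empty (else there is nothing to prove), monotonicity applied to $0\le|Y|$ for any $Y\in L$ rules out $+\infty$, and the value $-\infty$ is excluded by treating $\rho$ as proper (otherwise translation equivariance forces $\rho\equiv-\infty$). Once $\rho(0)=0$ is in hand, the three seminorm axioms follow by short computations: nonnegativity from monotonicity against $0\le|Y|$; absolute homogeneity from $|\lambda Y|=|\lambda|\,|Y|$ together with \ref{enu:Homogeneity} (the case $\lambda=0$ being covered by $\rho(0)=0$); and subadditivity from the pointwise bound $|Y_{1}+Y_{2}|\le|Y_{1}|+|Y_{2}|$, monotonicity, and the rescaling $|Y_{1}|+|Y_{2}|=2\cdot\tfrac{|Y_{1}|+|Y_{2}|}{2}$, which combines positive homogeneity with convexity to give
\[
\rho(|Y_{1}|+|Y_{2}|)\le\rho(|Y_{1}|)+\rho(|Y_{2}|).
\]
These same inequalities simultaneously show that $L$ is closed under addition and scalar multiplication, hence is a vector space on which $\left\Vert \cdot\right\Vert $ is a seminorm.

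The finiteness claim is where I would concentrate. Monotonicity applied to $Y\le|Y|$ immediately gives the upper bound $\rho(Y)\le\rho(|Y|)<\infty$. The real work is the lower bound: I would apply convexity to the identity $0=\tfrac{1}{2}(-|Y|)+\tfrac{1}{2}|Y|$ to obtain $0=\rho(0)\le\tfrac{1}{2}\rho(-|Y|)+\tfrac{1}{2}\rho(|Y|)$, hence $\rho(-|Y|)\ge-\rho(|Y|)$; combined with monotonicity applied to $-|Y|\le Y$ this yields $-\left\Vert Y\right\Vert \le\rho(Y)\le\left\Vert Y\right\Vert $, which delivers finiteness together with the natural bound $|\rho(Y)|\le\left\Vert Y\right\Vert $. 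The only conceptual obstacle is the careful handling of $\rho(0)$ and the implicit exclusion of $-\infty$ values; everything else is a routine assembly of the four defining axioms.
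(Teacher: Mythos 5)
Your proof is correct and follows essentially the same route as the paper's: the upper bound $\rho(Y)\le\rho(|Y|)$ from monotonicity, the lower bound from $\rho(0)=0$ plus convexity applied to a midpoint decomposition of $0$, and the triangle inequality via $|Y_{1}+Y_{2}|\le|Y_{1}|+|Y_{2}|$ together with the rescaling trick. The only cosmetic difference is that you run the convexity step on $\pm|Y|$ while the paper runs it on $\pm Y$, and you are somewhat more explicit about excluding the value $-\infty$ and about $L$ being a vector space.
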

\begin{proof}
We show first that that $\rho$ is $\mathbb{R}$\nobreakdash-valued
on $L=\left\{ Y:\,\rho\left(\left|Y\right|\right)<\infty\right\} $.
For this observe that $Y\le\left|Y\right|$, and by monotonicity thus
$\rho\left(Y\right)\le\rho\left(\left|Y\right|\right)=\left\Vert Y\right\Vert $.
Moreover it holds that $\rho\left(0\right)=0$ %
\footnote{Otherwise, $\rho\left(0\right)=\rho\left(2\cdot0\right)=2\cdot\rho\left(0\right)$
would imply $1=2$, a contradiction.%
} and thus 
\[
0=2\cdot\rho\left(\frac{1}{2}Y+\frac{1}{2}\left(-Y\right)\right)\le2\cdot\left(\frac{1}{2}\rho\left(Y\right)+\frac{1}{2}\rho\left(-Y\right)\right)=\rho\left(Y\right)+\rho\left(-Y\right),
\]
such that $-\rho\left(Y\right)\le\rho\left(-Y\right)$. Now $-Y\le\left|Y\right|$
and, again by monotonicity, $-\rho\left(Y\right)\le\rho\left(-Y\right)\le\rho\left(\left|Y\right|\right)=\left\Vert Y\right\Vert $.
Summarizing thus $\left|\rho\left(Y\right)\right|\le\left\Vert Y\right\Vert $,
such that $\rho$ is finite valued on $L$. 

Note that 
\[
\left\Vert \lambda\cdot Y\right\Vert =\rho\left(\left|\lambda\cdot Y\right|\right)=\rho\left(\left|\lambda\right|\cdot\left|Y\right|\right)=\left|\lambda\right|\cdot\rho\left(\left|Y\right|\right)=\left|\lambda\right|\cdot\left\Vert Y\right\Vert ,
\]
 and $\left\Vert \cdot\right\Vert $ thus is positively homogeneous.

Next it follows from monotonicity, positive homogeneity and convexity
that 
\begin{align*}
\left\Vert Y_{1}+Y_{2}\right\Vert = & \rho\left(\left|Y_{1}+Y_{2}\right|\right)\le\rho\left(\left|Y_{1}\right|+\left|Y_{2}\right|\right)=2\cdot\rho\left(\frac{1}{2}\left|Y_{1}\right|+\frac{1}{2}\left|Y_{2}\right|\right)\\
 & \le2\cdot\left(\frac{1}{2}\rho\left(\left|Y_{1}\right|\right)+\frac{1}{2}\rho\left(\left|Y_{2}\right|\right)\right)=\rho\left(\left|Y_{1}\right|\right)+\rho\left(\left|Y_{2}\right|\right)\\
 & =\left\Vert Y_{1}\right\Vert +\left\Vert Y_{2}\right\Vert ,
\end{align*}
and this is the triangle inequality. 
\end{proof}
The next proposition elaborates, that the risk measure is continuous
with respect to its associated norm. This consistency result on continuity
generalizes Proposition~\ref{prop:LInfty}.
\begin{prop}[Continuity]
\label{prop:Continuity}Let $\rho$ be a risk measure, defined on
a vector space of $\mathbb{R}$\nobreakdash-valued random variables.
Then $\rho$ is Lipschitz continuous with constant $1$ with respect
to the seminorm $\left\Vert \cdot\right\Vert =\rho\left(\left|\cdot\right|\right)$.\end{prop}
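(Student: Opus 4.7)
The plan is to show that $\rho(Y_2) - \rho(Y_1) \le \rho(|Y_2-Y_1|)$, and by symmetry the reverse inequality, which together give $|\rho(Y_2) - \rho(Y_1)| \le \|Y_2 - Y_1\|_\rho$, i.e.\ $1$-Lipschitz continuity. The key intermediate observation is that convexity \ref{enu:Convexity} combined with positive homogeneity \ref{enu:Homogeneity} forces $\rho$ to be subadditive: for arbitrary $A,B$ one writes
\[
\rho(A+B) = 2\,\rho\!\left(\tfrac{1}{2}(2A) + \tfrac{1}{2}(2B)\right) \le 2\left(\tfrac{1}{2}\rho(2A)+\tfrac{1}{2}\rho(2B)\right)=\rho(A)+\rho(B).
\]
This is the same trick used in the preceding proof of the triangle inequality, so no new idea is needed.

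Next I would exploit the pointwise bound $Y_2 = Y_1 + (Y_2-Y_1) \le Y_1 + |Y_2-Y_1|$. Applying monotonicity \ref{enu:Monotonicity} gives $\rho(Y_2)\le \rho(Y_1 + |Y_2-Y_1|)$, and then the subadditivity established above yields
\[
\rho(Y_2) \le \rho(Y_1) + \rho(|Y_2-Y_1|) = \rho(Y_1) + \|Y_2-Y_1\|_\rho.
\]
Interchanging the roles of $Y_1$ and $Y_2$ (noting $|Y_2-Y_1|=|Y_1-Y_2|$) yields the other direction, completing the proof.

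One subtle point worth flagging, rather than a true obstacle, is that the statement is asserted for a risk measure on a general vector space, which in principle could take the values $\pm\infty$. To form the difference $\rho(Y_2)-\rho(Y_1)$ one needs $\rho(Y_1)$ and $\rho(Y_2)$ to be finite whenever $\|Y_2-Y_1\|_\rho<\infty$; but this is guaranteed by Proposition~\ref{prop:Finiteness}, since the inequality $\rho(Y_i)\le \rho(Y_j)+\rho(|Y_i-Y_j|)$ derived above shows that finiteness of $\rho$ propagates along $\|\cdot\|_\rho$-neighbourhoods (and if $\|Y_2-Y_1\|_\rho=\infty$ the Lipschitz estimate is vacuous). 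So the only real work is to assemble \ref{enu:Monotonicity}, \ref{enu:Convexity} and \ref{enu:Homogeneity} into subadditivity and then bound $Y_2-Y_1$ by $|Y_2-Y_1|$; axiom \ref{enu:Equivariance} is not needed for this continuity statement.
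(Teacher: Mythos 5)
Your proof is correct and follows essentially the same route as the paper: both derive $\rho(Y_2)\le\rho(Y_1)+\rho(|Y_2-Y_1|)$ by combining the homogeneity--convexity subadditivity trick with monotonicity (you merely apply monotonicity before subadditivity rather than after, which is immaterial), and then symmetrize. Your remark on finiteness is a sensible clarification but not a departure from the paper's argument.
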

\begin{proof}
As for continuity note that 
\begin{eqnarray*}
\rho\left(Y_{2}\right) & = & 2\cdot\rho\left(\frac{1}{2}Y_{1}+\frac{1}{2}\left(Y_{2}-Y_{1}\right)\right)\\
 & \le & 2\left(\frac{1}{2}\rho\left(Y_{1}\right)+\frac{1}{2}\rho\left(Y_{2}-Y_{1}\right)\right)\le\rho\left(Y_{1}\right)+\rho\left(\left|Y_{2}-Y_{1}\right|\right)
\end{eqnarray*}
by convexity and monotonicity. It follows that $\rho\left(Y_{2}\right)-\rho\left(Y_{1}\right)\le\left\Vert Y_{2}-Y_{1}\right\Vert .$
Interchanging the roles of $Y_{1}$ and $Y_{2}$ reveals that 
\[
\left|\rho\left(Y_{2}\right)-\rho\left(Y_{1}\right)\right|\le\left\Vert Y_{2}-Y_{1}\right\Vert ,
\]
the assertion. To accept that the Lipschitz constant $1$ cannot be
improved consider the particular choices $Y_{1}:=0$ and $Y_{2}:=\one$
in view of translation equivariance \ref{enu:Equivariance}. 
\end{proof}

\section{\label{sec:LSigma}Spectral risk measures}

Among the initial attempts to introduce premium principles to price
insurance contracts are distorted probabilities, a concept which can
be summarized nowadays by distorted acceptability functionals (cf.~\cite{PflugRomisch2007})
or spectral risk measures. Spectral risk measures---or the  weighted
Value-at-Risk (cf.~\cite{Cherny2006}), which is a more suggestive
term---have been considered for example in \cite{Acerbi2002,Acerbi2002a}.
This risk measure involves the Value-at-Risk at level $p$, 
\[
\VaR_{p}\left(Y\right):=F_{Y}^{-1}\left(p\right):=\inf\left\{ y:\, P\left(Y\le y\right)\ge p\right\} ,
\]
which is the left-continuous, lower semi-continuous (lsc.) \emph{quantile};
the spectral risk measure (or weighted $\VaR$) then is the functional
\begin{equation}
\rho_{\sigma}\left(Y\right):=\int_{0}^{1}\sigma\left(u\right)\VaR_{u}\left(Y\right)\,\mathrm{d}u,\label{eq:SRM}
\end{equation}
mapping a random variable $Y$ to a real number, if the integral exists. 

The function $\sigma:\,\left[0,1\right]\to\mathbb{R}_{0}^{+}$, called
the \emph{spectrum} or \emph{spectral function}, is a weight function.
To build a reasonable premium principle the function $\sigma$ should
obey some properties to be consistent with the axioms imposed on risk
measures: first, associating $Y$ with loss, $\sigma$ should evaluate
to nonnegative reals, $\mathbb{R}_{0}^{+}$. Higher losses should
be weighted higher, thus $\sigma$ should be nondecreasing. And finally,
as $\sigma$ represents a weight function, it is natural to request
$\int_{0}^{1}\sigma\left(u\right)\,\mathrm{d}u=1$.

An important, elementary spectral  risk measure satisfying all axioms
above is the Average Value-at-Risk, which is specified by the spectral
function 
\[
\sigma_{\alpha}\left(u\right):=\begin{cases}
0 & \text{if }u<\alpha\\
\frac{1}{1-\alpha} & \text{else},
\end{cases}
\]
that is

\begin{equation}
\AVaR_{\alpha}\left(Y\right):=\frac{1}{1-\alpha}\int_{\alpha}^{1}\VaR_{u}\left(Y\right)\mathrm{d}u\qquad\left(\alpha<1\right),\label{eq:AVaR}
\end{equation}
and for $\alpha=1$ the Average Value-at-Risk per definition is 
\[
\AVaR_{1}\left(Y\right):=\lim_{\alpha\nearrow1}\AVaR_{\alpha}\left(Y\right)=\esssup Y\qquad\left(\alpha=1\right).
\]

\subsection*{The domain of spectral risk measures}

It is obvious that the Average Value-at-Risk ($\alpha<1$) may be
well defined on $L^{1}$, with the result that 
\[
\left|\AVaR_{\alpha}\left(Y\right)\right|\le\frac{1}{1-\alpha}\mathbb{E}\left|Y\right|=\frac{1}{1-\alpha}\left\Vert Y\right\Vert _{1}<\infty\qquad\left(Y\in L^{1}\right),
\]
that means that $\AVaR_{\alpha}$ is finite valued whenever $Y\in L^{1}$.
This is not the case, however, for $\alpha=1$: a restriction to the
smaller space $L^{\infty}\subset L^{1}$ is necessary in order to
ensure that $\AVaR_{1}$ is finite valued, 
\[
\left|\AVaR_{1}\left(Y\right)\right|\le\left\Vert Y\right\Vert _{\infty}<\infty\qquad\left(Y\in L^{\infty}\right).
\]

Even more peculiarities appear when considering the spectral function
$\sigma\left(u\right):=\frac{1}{2\sqrt{1-u}}$. Clearly, $\sigma\in L^{q}$
whenever $q<2$, but $\sigma\notin L^{2}$. Hölder's inequality can
be employed to insure that $\rho_{\sigma}$ is finite valued on $L^{p}$
($p>2$, $\frac{1}{q}+\frac{1}{p}=1$), because 
\[
\left|\rho_{\sigma}\left(Y\right)\right|\le\left\Vert \sigma\right\Vert _{q}\cdot\left(\int_{0}^{1}F_{Y}^{-1}(u)^{p}\right)^{\frac{1}{p}}=\frac{1}{2}\left(\frac{2}{2-q}\right)^{\frac{1}{q}}\cdot\left\Vert Y\right\Vert _{p},
\]
and the constant $\frac{1}{2}\left(\frac{2}{2-q}\right)^{\frac{1}{q}}$
again exceeds every finite bound whenever $q$ approaches $2$ from
below. 

So what is a good space to consider $\rho_{\sigma}$? Any $L^{p}$
($p>2$) guarantees that $\rho_{\sigma}$ is finite valued and continuous,
but $L^{2}$ is obviously too large. The naïve choice $\bigcup_{p>2}L^{p}$
does not have a satisfying norm, or topology neither. (See, for different
configurations, \cite{Cheridito2008,Cheridito2009a}.)

\medskip{}

\subsection*{Further properties and importance of spectral risk measures}

A well known and essential representation of risk measures was elaborated
by Kusuoka in \cite{Kusuoka} (see \cite{Schachermayer2006} for the
statement presented below). Kusuoka's result considers risk measures
on $L^{\infty}$ which are \emph{version independent} (also: \emph{law
invariant}), i.e. which satisfy $\rho\left(Y\right)=\rho\left(Y^{\prime}\right)$
whenever $Y$ and $Y^{\prime}$ share the same law, that is if $P\left(Y\le y\right)=P\left(Y^{\prime}\le y\right)$
for every $y\in\mathbb{R}$.
\begin{thm}[Kusuoka's representation]
\label{thm:Kusuoka}A version independent risk measure $\rho$ on
$L^{\infty}$ of an atom-less probability space $\left(\Omega,\,\mathcal{F},\, P\right)$
has the representation 
\begin{equation}
\rho\left(Y\right)=\sup_{\mu\in\mathscr{M}}\int_{0}^{1}\AVaR_{\alpha}\left(Y\right)\,\mu\left(\mathrm{d}\alpha\right),\label{eq:Kusuoka}
\end{equation}
where $\mathscr{M}$ is a collection of probability measures on $\left[0,\,1\right]$.
\end{thm}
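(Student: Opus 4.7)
The plan is to derive the representation in three steps: (i) a Fenchel--Moreau dual representation on $L^{\infty}$, (ii) a reduction by law invariance via a rearrangement inequality, and (iii) a Choquet-type decomposition of nondecreasing probability densities on $[0,1]$ as mixtures of the $\AVaR$-spectra $\sigma_{\alpha}$.

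For (i), Proposition~\ref{prop:LInfty} guarantees that $\rho$ is finite-valued and Lipschitz on $L^{\infty}$, and combined with \ref{enu:Monotonicity}, \ref{enu:Homogeneity}, \ref{enu:Convexity}, \ref{enu:Equivariance} the Fenchel--Moreau theorem, applied with the pairing $\langle L^{\infty}, L^{1}\rangle$, yields
\[
\rho(Y)\;=\;\sup_{Z \in \mathcal{Z}}\mathbb{E}[Y Z],
\]
where $\mathcal{Z} \subseteq L^{1}$ is a convex, weak-$*$ closed set of densities ($Z \geq 0$, $\mathbb{E}Z = 1$, enforced respectively by monotonicity and by translation equivariance with positive homogeneity, which also forces the conjugate $\rho^{*}(Z) = 0$ on $\mathcal{Z}$). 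The non-trivial input here is realizing the conjugate dual inside $L^{1}$ rather than in the much larger $(L^{\infty})^{*}$; this requires the Fatou property of $\rho$, which is automatic for version-independent $\rho$ on an atomless probability space by Jouini--Schachermayer--Touzi.

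For (ii), version independence makes $\mathcal{Z}$ invariant under equi-measurable rearrangement, so that $Z \in \mathcal{Z}$ and $Z' \stackrel{d}{=} Z$ imply $Z' \in \mathcal{Z}$. The Hardy--Littlewood--P\'olya rearrangement inequality, together with atomlessness of $(\Omega, \mathcal{F}, P)$ to realize a comonotone coupling of $Y$ with a rearrangement of $Z$, gives
\[
\sup_{Z' \stackrel{d}{=} Z}\mathbb{E}[Y Z']\;=\;\int_{0}^{1}\VaR_{u}(Y)\,\VaR_{u}(Z)\,\mathrm{d}u,
\]
so the dual supremum collapses to one indexed by equi-distribution classes in $\mathcal{Z}$, each encoded by a nondecreasing, nonnegative function $q := \VaR_{\cdot}(Z)$ on $[0,1]$ with $\int_{0}^{1}q(u)\,\mathrm{d}u = 1$.

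For (iii), to each such $q$ associate the Borel measure
\[
\mu(\mathrm{d}\alpha)\;:=\;q(0^{+})\,\delta_{0}(\mathrm{d}\alpha)\;+\;(1-\alpha)\,\mathrm{d}q(\alpha) \quad\text{on }[0,1],
\]
with a suitable atom at $\alpha = 1$ accounting for a possible jump of $q$ there. A Stieltjes integration by parts shows that $\mu$ is a probability on $[0,1]$ and that $q(u) = \int_{0}^{1}\sigma_{\alpha}(u)\,\mu(\mathrm{d}\alpha)$ for all $u \in [0,1]$; multiplying by $\VaR_{u}(Y)$, integrating over $u$ and interchanging order of integration via Fubini yields
\[
\int_{0}^{1}\VaR_{u}(Y)\,q(u)\,\mathrm{d}u\;=\;\int_{0}^{1}\AVaR_{\alpha}(Y)\,\mu(\mathrm{d}\alpha).
\]
Taking $\mathscr{M}$ to be the collection of all such $\mu$, one per equi-distribution class in $\mathcal{Z}$, yields the representation \eqref{eq:Kusuoka}. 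The principal obstacle is the Fatou property invoked in step (i), a genuinely non-elementary fact that relies crucially on atomlessness; the remaining two steps are classical rearrangement calculus and Stieltjes integration by parts, modulo careful bookkeeping of possible atoms of $q$ at the endpoints $0$ and $1$.
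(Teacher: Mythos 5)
Your sketch is correct in outline, but note that the paper does not prove Theorem~\ref{thm:Kusuoka} at all: it imports the result, citing Kusuoka and the survey of Jouini--Schachermayer--Touzi for the statement. What you have written is essentially the standard modern proof, and it is sound: (i) Fenchel--Moreau on the pairing $\langle L^{\infty},L^{1}\rangle$ gives $\rho(Y)=\sup_{Z\in\mathcal{Z}}\mathbb{E}[YZ]$ over densities; (ii) law invariance of $\rho$ makes $\rho^{*}$, hence $\mathcal{Z}$, rearrangement invariant, and Hardy--Littlewood with a comonotone coupling (available by atomlessness) collapses each equi-distribution class to $\int_{0}^{1}\VaR_{u}(Y)\,q(u)\,\mathrm{d}u$ with $q=F_{Z}^{-1}$; (iii) your mixture $\mu(\mathrm{d}\alpha)=q(0^{+})\delta_{0}+(1-\alpha)\,\mathrm{d}q(\alpha)$ is exactly the correspondence the paper records in \eqref{eq:MuSigma}--\eqref{eq:5}, read in the converse direction, and the Fubini step is harmless for $Y\in L^{\infty}$. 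You are also right that the only genuinely deep ingredient is the automatic Fatou property of law-invariant convex risk measures; two caveats there. First, that theorem is proved for \emph{standard} atomless spaces (one needs at least enough homogeneity of the underlying space to move mass around), so as stated for an arbitrary atomless $(\Omega,\mathcal{F},P)$ your step (i) either needs the Fatou property as a hypothesis or an appeal to the later extensions of the Jouini--Schachermayer--Touzi result. Second, the ``atom at $\alpha=1$'' you hedge about is not needed: for an integrable nondecreasing $q$ one has $(1-u)q(u)\le\int_{u}^{1}q\to0$, so integration by parts already gives $\mu([0,1])=\int_{0}^{1}q=1$ with no mass at $1$; atoms at $1$ only arise for components of the risk measure of $\esssup$ type, which in the supremum formulation \eqref{eq:Kusuoka} can equally be approximated by $\delta_{\alpha}$ with $\alpha\nearrow1$. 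With those two points tidied up, your argument is a complete and correct proof, which is more than the paper itself supplies.
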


\paragraph{Kusuoka representation \emph{of} a spectral risk measure. }

The Kusuoka representation of a spectral risk measure $\rho_{\sigma}$
is provided by the probability measure $\mu_{\sigma}\left(\left(a,b\right]\right):=\int_{a}^{b}\mathrm{d}\mu_{\sigma}\left(\alpha\right)$
on $\left[0,\,1\right]$, where $\mu_{\sigma}$ is the nondecreasing
function 
\begin{equation}
\mu_{\sigma}\left(p\right):=\left(1-p\right)\sigma\left(p\right)+\int_{0}^{p}\sigma\left(u\right)\mathrm{d}u\quad(0\le p\le1),\qquad\mu_{\sigma}\left(p\right):=0\quad\left(p<0\right),\label{eq:MuSigma}
\end{equation}
which satisfies $\mu_{\sigma}\left(1\right)=1$ and $\mathrm{d}\mu_{\sigma}\left(p\right)=\left(1-p\right)\mathrm{d}\sigma\left(p\right)$.
It holds that 
\begin{equation}
\rho_{\sigma}\left(Y\right)=\int_{0}^{1}\AVaR_{\alpha}\left(Y\right)\,\mu_{\sigma}\left(\mathrm{d}\alpha\right),\label{eq:5}
\end{equation}
which exposes the Kusuoka representation of a spectral risk measure
(cf.~\cite{ShapiroAlois}).

\paragraph{Kusuoka representation \emph{by} spectral risk measures. }

Conversely, any measure $\mu$ (provided that $\mu\left(\left\{ 1\right\} \right)=0$)
of the representation \eqref{eq:Kusuoka} can be related to the function
\begin{equation}
\sigma_{\mu}\left(\alpha\right)=\int_{0}^{\alpha}\frac{1}{1-u}\,\mu(\mathrm{d}u),\label{eq:Sigma}
\end{equation}
and it holds that 
\[
\int_{0}^{1}\AVaR_{\alpha}\left(Y\right)\,\mu\left(\mathrm{d}\alpha\right)=\int_{0}^{1}\sigma_{\mu}\left(\alpha\right)\VaR_{\alpha}\left(Y\right)\,\mathrm{d}\alpha=\rho_{\sigma_{\mu}}\left(Y\right),
\]
which is a spectral risk measure. 

But even the requirement $\mu\left(\left\{ 1\right\} \right)=0$ can
be dropped: indeed, there is a set $\mathscr{S}$ of continuous (and
thus bounded) spectral functions on $\left[0,\,1\right]$, such that
the relation 
\begin{equation}
\rho\left(Y\right)=\sup_{\mu\in\mathscr{M}}\int_{0}^{1}\AVaR_{\alpha}\left(Y\right)\,\mu\left(\mathrm{d}\alpha\right)=\sup_{\sigma\in\mathscr{S}}\int_{0}^{1}\VaR_{\alpha}\left(Y\right)\sigma\left(\alpha\right)\,\mathrm{d}\alpha=\sup_{\sigma\in\mathscr{S}}\rho_{\sigma}\left(Y\right)\label{eq:Spectral}
\end{equation}
holds (cf.~\cite{PflugPichler}). This again exposes the importance
of spectral risk measures, as every version independent risk measure
$\rho$ can be built from spectral risk measures by \eqref{eq:Spectral}. 

\medskip{}

Recall that Kusuoka's representation builds on the space $L^{\infty}$.
But again it is not clear, if, and to which larger space this risk
measure can be extended, because every $\sigma$ might allow a different
domain.

\section{The space of natural domain,~$L_{\sigma}$}

Let $\sigma$ be a nonnegative, nondecreasing, integrable function
with $\int_{0}^{1}\sigma(u)\mathrm{d}u=1$. For $Y$ a random variable
we consider the function 
\[
\rho_{\sigma}\left(Y\right)=\int_{0}^{1}\sigma\left(u\right)\, F_{Y}^{-1}\left(u\right)\,\mathrm{d}u
\]
already defined in \eqref{eq:SRM}. For $\sigma\in L^{1}$ (which
is a minimal requirement to insure that $\int_{0}^{1}\sigma(u)\mathrm{d}u=1$),
$\rho_{\sigma}$ is certainly well defined for $Y\in L^{\infty}$,
but for other random variables the integral possibly diverges. And
it might diverge to $+\infty$, to $-\infty$, or be even of the indefinite
form $\infty-\infty$. The following definition respects the finiteness
of the spectral risk measure in view of Proposition~\ref{prop:Finiteness}.
\begin{defn}
The \emph{natural domain} corresponding to a spectral risk measure
$\rho_{\sigma}$ induced by a spectral function $\sigma$ is 
\[
L_{\sigma}:=\left\{ Y\in L^{0}:\,\left\Vert Y\right\Vert _{\sigma}<\infty\right\} ,
\]
where 
\[
\left\Vert Y\right\Vert _{\sigma}:=\rho_{\sigma}\left(\left|Y\right|\right).
\]

\end{defn}
Note that $\left|Y\right|\ge0$ is positive, such that $F_{\left|Y\right|}^{-1}\left(\cdot\right)\ge0$
is positive as well and the condition $\rho_{\sigma}\left(\left|Y\right|\right)<\infty$
makes perfect sense for any measurable random variable $Y\in L^{0}$.
\begin{rem}
The seminorm $\left\Vert \cdot\right\Vert _{\sigma}$ has the representation
\[
\left\Vert Y\right\Vert _{\sigma}=\int_{0}^{\infty}\tau_{\sigma}\left(F_{\left|Y\right|}(y)\right)\mathrm{d}y
\]
in terms of the cdf. $F_{\left|Y\right|}$ directly, without involving
the inverse $F_{\left|Y\right|}^{-1}$ ($\tau_{\sigma}\left(\alpha\right):=\int_{\alpha}^{1}\sigma(u)\mathrm{d}u$).\end{rem}
\begin{prop}
$\left\Vert \cdot\right\Vert _{\sigma}=\rho_{\sigma}\left(\left|\cdot\right|\right)$
is a norm on $L_{\sigma}$.\end{prop}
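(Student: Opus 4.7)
The plan is to reuse the work already done in Proposition~\ref{prop:Finiteness}: since $\rho_\sigma$ satisfies \ref{enu:Monotonicity}, \ref{enu:Homogeneity}, \ref{enu:Convexity} and \ref{enu:Equivariance} (as an $\AVaR$-mixture, cf.~\eqref{eq:5}), that proposition already tells us that $\|\cdot\|_\sigma$ is a seminorm on its natural domain $L_\sigma$. So only \emph{definiteness} remains to be established, i.e.\ the implication $\|Y\|_\sigma=0\Rightarrow Y=0$ almost surely. The rest is bookkeeping.

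First I would verify that Proposition~\ref{prop:Finiteness} applies verbatim to $\rho_\sigma$: monotonicity, positive homogeneity, convexity and translation equivariance of $\rho_\sigma$ are all inherited from the quantile representation \eqref{eq:SRM}, using that $u\mapsto\VaR_u$ is monotone, positively homogeneous, translation equivariant, and that $Y\mapsto\VaR_u(Y)$ is comonotone-additive (so convexity follows from $\sigma\geq 0$ together with standard rearrangement arguments). This hands us subadditivity and homogeneity of $\|\cdot\|_\sigma$ on $L_\sigma$ for free.

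For definiteness, suppose $\|Y\|_\sigma=\int_0^1\sigma(u)\,F_{|Y|}^{-1}(u)\,\mathrm{d}u=0$. Since $|Y|\geq 0$, the quantile $F_{|Y|}^{-1}(\cdot)$ is nonnegative, and $\sigma$ is nonnegative, so the integrand vanishes Lebesgue-almost everywhere. Now set $u_0:=\inf\{u\in[0,1]:\sigma(u)>0\}$; since $\sigma$ is nondecreasing with $\int_0^1\sigma\,\mathrm{d}u=1>0$, we have $u_0<1$ and $\sigma(u)>0$ for every $u\in(u_0,1)$. Therefore $F_{|Y|}^{-1}(u)=0$ for a.e.\ $u\in(u_0,1)$, and since $F_{|Y|}^{-1}$ is nondecreasing we may pick $u_1\in(u_0,1)$ arbitrarily close to $1$ with $F_{|Y|}^{-1}(u_1)=0$. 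By the definition of the lower quantile this gives $P(|Y|\leq 0)\geq u_1$ for $u_1$ arbitrarily close to $1$, whence $P(|Y|=0)=1$ and $Y=0$ almost surely.

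I do not anticipate any serious obstacle: the seminorm properties are purely a restatement of Proposition~\ref{prop:Finiteness}, and the only non-automatic point, definiteness, is driven entirely by the fact that $\sigma$ is nondecreasing with total mass $1$, so its support reaches up to $u=1$ and forces the quantile of $|Y|$ to vanish there. The mildly delicate ingredient is handling the set $\{\sigma=0\}$ correctly, which is what the introduction of $u_0$ takes care of.
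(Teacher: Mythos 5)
Your proposal is correct and follows essentially the same route as the paper: invoke Proposition~\ref{prop:Finiteness} for the seminorm properties, then show definiteness from the nonnegativity and monotonicity of $\sigma$ and $F_{\left|Y\right|}^{-1}$. Your explicit treatment of the set $\left\{ \sigma=0\right\} $ via $u_{0}$ is a detail the paper's proof glosses over (it simply asserts $F_{\left|Y\right|}^{-1}\equiv0$), and it is handled correctly.
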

\begin{proof}
It was already shown in Proposition~\ref{prop:Finiteness} that $\left\Vert \cdot\right\Vert _{\sigma}$
is a seminorm. What remains to be shown is that $\left\Vert \cdot\right\Vert _{\sigma}$
separates points. For this recall that $\sigma$ is positive, nondecreasing,
and satisfies $\int_{0}^{1}\sigma\left(p\right)\mathrm{d}p=1$, and
$F_{\left|Y\right|}\left(\cdot\right)$ is a nondecreasing and positive
function as well. Hence if $\int_{0}^{1}\sigma\left(p\right)F_{\left|Y\right|}^{-1}\left(p\right)\mathrm{d}p=0$,
then $F_{\left|Y\right|}^{-1}\left(\cdot\right)\equiv0$, that is
$Y=0$ almost everywhere. The function $\left\Vert \cdot\right\Vert _{\sigma}$
thus separates points in $L_{\sigma}$ and $\left\Vert \cdot\right\Vert _{\sigma}$
hence is a norm.
\end{proof}
The next theorem already elaborates that the set $L_{\sigma}$ is
large enough and at least contains $L^{p}$, whenever $\sigma\in L^{q}$
(and the exponents are conjugate, $\frac{1}{p}+\frac{1}{q}=1$). 
\begin{thm}[Comparison with $L^{p}$]
 \label{thm:Inclusions}Let $\sigma$ be fixed. 
\begin{enumerate}[label=(\roman{enumi})]
\item \label{enu:Lp}If $\sigma\in L^{q}$ for some $q\in\left[1,\infty\right]$
with conjugate exponent $p$, then 
\[
L^{\infty}\subset L^{p}\subset L_{\sigma}\subset L^{1}
\]
and 
\begin{equation}
\left\Vert Y\right\Vert _{1}\le\left\Vert Y\right\Vert _{\sigma}\le\left\Vert \sigma\right\Vert _{q}\cdot\left\Vert Y\right\Vert _{p}\label{eq:sigmaLp}
\end{equation}
whenever $Y\in L^{p}$.
\item \label{enu:LInfty}For $\sigma$ bounded (i.e. $\sigma\in L^{\infty}$)
it holds moreover that $L_{\sigma}=L^{1}$, the norms are equivalent
and satisfy 
\[
\left\Vert Y\right\Vert _{1}\le\left\Vert Y\right\Vert _{\sigma}\le\left\Vert \sigma\right\Vert _{\infty}\cdot\left\Vert Y\right\Vert _{1}.
\]

\end{enumerate}
\end{thm}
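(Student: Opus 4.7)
The plan is to decompose the statement into the two norm inequalities of \ref{enu:Lp}, deduce the set inclusions from those inequalities, and then read off \ref{enu:LInfty} as the special case $q=\infty$, $p=1$.

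First I would recall the equimeasurability identity $\int_{0}^{1}F_{|Y|}^{-1}(u)^{p}\,\mathrm{d}u=\mathbb{E}|Y|^{p}=\left\Vert Y\right\Vert _{p}^{p}$, which is a standard property of the quantile function and is the only bridge one needs between the $L^{p}$\nobreakdash-norms on $\Omega$ and the $L^{p}$\nobreakdash-norms on $[0,1]$ with Lebesgue measure. Given this, the upper bound
\[
\left\Vert Y\right\Vert _{\sigma}=\int_{0}^{1}\sigma(u)\,F_{|Y|}^{-1}(u)\,\mathrm{d}u\le\left\Vert \sigma\right\Vert _{q}\cdot\left\Vert Y\right\Vert _{p}
\]
is just H\"older's inequality on $\bigl([0,1],\mathrm{d}u\bigr)$ applied to $\sigma$ and $F_{|Y|}^{-1}$. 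From this $L^{p}\subset L_{\sigma}$ is immediate, and the standard inclusion $L^{\infty}\subset L^{p}$ needs nothing further.

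The slightly more delicate step, and the one I expect to be the main obstacle, is the lower bound $\left\Vert Y\right\Vert _{1}\le\left\Vert Y\right\Vert _{\sigma}$. The idea is to exploit that both $\sigma$ and $F_{|Y|}^{-1}$ are nondecreasing on $[0,1]$ and that $\sigma$ has mean $1$. This is a Chebyshev/Hardy--Littlewood rearrangement (comonotone covariance) inequality: for two nondecreasing functions $f,g$ on $[0,1]$ one has $\int_{0}^{1}fg\,\mathrm{d}u\ge\int_{0}^{1}f\,\mathrm{d}u\cdot\int_{0}^{1}g\,\mathrm{d}u$. Applied with $f=\sigma$ and $g=F_{|Y|}^{-1}$ this yields
\[
\left\Vert Y\right\Vert _{\sigma}=\int_{0}^{1}\sigma(u)\,F_{|Y|}^{-1}(u)\,\mathrm{d}u\ge\int_{0}^{1}\sigma(u)\,\mathrm{d}u\cdot\int_{0}^{1}F_{|Y|}^{-1}(u)\,\mathrm{d}u=\mathbb{E}\left|Y\right|=\left\Vert Y\right\Vert _{1}.
\]
If a self-contained justification is preferred, one writes $\sigma(u)-1=\sigma(u)-\int_{0}^{1}\sigma$, observes that the nondecreasing function $\sigma-1$ changes sign at most once (from nonpositive to nonnegative), and pairs this sign structure with the monotonicity of $F_{|Y|}^{-1}$ to conclude $\int_{0}^{1}(\sigma-1)\,F_{|Y|}^{-1}\,\mathrm{d}u\ge0$. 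From this lower bound the inclusion $L_{\sigma}\subset L^{1}$ is immediate, completing \ref{enu:Lp}.

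For \ref{enu:LInfty}, I would simply specialize \ref{enu:Lp} to $q=\infty$, $p=1$, obtaining $\left\Vert Y\right\Vert _{\sigma}\le\left\Vert \sigma\right\Vert _{\infty}\cdot\left\Vert Y\right\Vert _{1}$ directly, either from the H\"older step above or, more elementarily, by bounding $\sigma(u)\le\left\Vert \sigma\right\Vert _{\infty}$ pointwise inside the integral. Combined with $\left\Vert Y\right\Vert _{1}\le\left\Vert Y\right\Vert _{\sigma}$ from the previous paragraph the two norms are equivalent, whence $L_{\sigma}=L^{1}$ as sets.
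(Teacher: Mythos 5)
Your proposal is correct and follows essentially the same route as the paper: the upper bound via H\"older's inequality on $[0,1]$, and the lower bound via the Chebyshev/comonotone rearrangement inequality, for which the paper's own proof is precisely the sign-change argument you sketch as the self-contained justification (and the paper records the Chebyshev's sum inequality shortcut in a remark immediately after the theorem). Part \ref{enu:LInfty} as the case $q=\infty$, $p=1$ matches as well.
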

It follows in particular from \ref{enu:LInfty} that $P\left(A\right)\le\left\Vert \one_{A}\right\Vert _{\sigma}\le1$
for measurable sets $A$, and $\left\Vert Y\right\Vert _{\sigma}=\left\Vert Y\right\Vert _{1}$
for the function being constantly $1$ ($\sigma=\one$).
\begin{proof}
Note that $\int_{0}^{1}\sigma\left(u\right)\mathrm{d}u=1$ and $\sigma\left(\cdot\right)$
is nondecreasing, hence there is a $\tilde{u}\in\left(0,\,1\right)$
such that $\sigma\left(u\right)\le1$ for $u<\tilde{u}$ and $\sigma\left(u\right)\ge1$
for $u>\tilde{u}$. Note as well that $\int_{0}^{\tilde{u}}1-\sigma\left(u\right)\,\mathrm{d}u=\int_{\tilde{u}}^{1}\sigma\left(u\right)-1\,\mathrm{d}u$.
Then it follows that 
\begin{align*}
\int_{0}^{\tilde{u}}\left(1-\sigma\left(u\right)\right)F_{\left|Y\right|}^{-1}\left(u\right)\mathrm{d}u & \le\int_{0}^{\tilde{u}}\left(1-\sigma\left(u\right)\right)F_{\left|Y\right|}^{-1}\left(\tilde{u}\right)\mathrm{d}u\\
 & =\int_{\tilde{u}}^{1}\left(\sigma\left(u\right)-1\right)F_{\left|Y\right|}^{-1}\left(\tilde{u}\right)\mathrm{d}u\le\int_{\tilde{u}}^{1}\left(\sigma\left(u\right)-1\right)F_{\left|Y\right|}^{-1}\left(u\right)\mathrm{d}u,
\end{align*}
because $F_{\left|Y\right|}^{-1}\left(\cdot\right)$ is increasing.
After rearranging thus 
\[
\left\Vert Y\right\Vert _{1}=\mathbb{E}\left|Y\right|=\int_{0}^{1}F_{\left|Y\right|}^{-1}\left(u\right)\mathrm{d}u\le\int_{0}^{1}F_{\left|Y\right|}^{-1}\left(u\right)\sigma\left(u\right)\mathrm{d}u=\rho_{\sigma}\left(\left|Y\right|\right)=\left\Vert Y\right\Vert _{\sigma},
\]
which is the first assertion. The inclusion $L_{\sigma}\subset L^{1}$
is immediate as well, as $\left\Vert Y\right\Vert _{\sigma}<\infty$
implies that $\left\Vert Y\right\Vert _{1}<\infty$. 

The remaining inequality 
\[
\left\Vert Y\right\Vert _{\sigma}=\int_{0}^{1}F_{\left|Y\right|}^{-1}\left(u\right)\sigma\left(u\right)\mathrm{d}u\le\left(\int_{0}^{1}\sigma\left(u\right)^{q}\right)^{\frac{1}{q}}\cdot\left(\int_{0}^{1}F_{\left|Y\right|}^{-1}\left(u\right)^{p}\right)^{\frac{1}{p}}=\left\Vert \sigma\right\Vert _{q}\cdot\left(\mathbb{E}\left|Y\right|^{p}\right)^{\frac{1}{p}}
\]
is Hölder's inequality.\end{proof}
\begin{rem}
The inequality $\left\Vert Y\right\Vert _{1}\le\left\Vert Y\right\Vert _{\sigma}$
is also a direct consequence of Chebyshev's sum inequality in its
continuous form, which states that $\int_{0}^{1}f\left(u\right)\mathrm{d}u\cdot\int_{0}^{1}g\left(u\right)\mathrm{d}u\le\int_{0}^{1}f\left(u\right)g\left(u\right)\mathrm{d}u$
whenever $f$ and $g$ are both nondecreasing (choose $f=\sigma$
and $g=F_{\left|Y\right|}^{-1}$; cf.~\cite{Hardy1988}).\end{rem}
\begin{thm}[Comparability of $L_{\sigma}$\nobreakdash-spaces]
Suppose that 
\begin{equation}
c:=\sup_{0\le\alpha<1}\frac{\int_{\alpha}^{1}\sigma_{2}\left(u\right)\mathrm{d}u}{\int_{\alpha}^{1}\sigma_{1}\left(u\right)\mathrm{d}u}\label{eq:9}
\end{equation}
is finite ($c<\infty$), then 
\begin{equation}
\left\Vert Y\right\Vert _{\sigma_{2}}\le c\cdot\left\Vert Y\right\Vert _{\sigma_{1}}\qquad\left(Y\in L_{\sigma_{1}}\right)\label{eq:10}
\end{equation}
 and $L_{\sigma_{1}}\subset L_{\sigma_{2}}$; $c$ is moreover the
smallest constant satisfying \eqref{eq:10}, the identity 
\[
\id:\left(L_{\sigma_{1}},\left\Vert \cdot\right\Vert _{\sigma_{1}}\right)\rightarrow\left(L_{\sigma_{2}},\left\Vert \cdot\right\Vert _{\sigma_{2}}\right)
\]
 thus is continuous with norm $\left\Vert \id\right\Vert =c$. 
\end{thm}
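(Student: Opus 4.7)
The plan is to exploit the tail-integral representation of the seminorm already recorded in the remark preceding the theorem, namely
\[
\left\Vert Y\right\Vert _{\sigma}=\int_{0}^{\infty}\tau_{\sigma}\bigl(F_{\left|Y\right|}(y)\bigr)\,\mathrm{d}y,\qquad\tau_{\sigma}(\alpha)=\int_{\alpha}^{1}\sigma(u)\,\mathrm{d}u,
\]
which reduces the proof of the norm inequality to a pointwise comparison of the two tails $\tau_{\sigma_{1}}$ and $\tau_{\sigma_{2}}$.

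First I would observe that the definition \eqref{eq:9} of $c$ is precisely the statement
\[
\tau_{\sigma_{2}}(\alpha)\le c\cdot\tau_{\sigma_{1}}(\alpha)\qquad(0\le\alpha<1),
\]
and for $\alpha=1$ both tails vanish. Substituting $\alpha=F_{\left|Y\right|}(y)$ and integrating against $\mathrm{d}y$ on $[0,\infty)$ then yields
\[
\left\Vert Y\right\Vert _{\sigma_{2}}=\int_{0}^{\infty}\tau_{\sigma_{2}}\bigl(F_{\left|Y\right|}(y)\bigr)\,\mathrm{d}y\le c\int_{0}^{\infty}\tau_{\sigma_{1}}\bigl(F_{\left|Y\right|}(y)\bigr)\,\mathrm{d}y=c\cdot\left\Vert Y\right\Vert _{\sigma_{1}},
\]
which is \eqref{eq:10}. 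The inclusion $L_{\sigma_{1}}\subset L_{\sigma_{2}}$ is an immediate consequence, and the statement about the operator norm of the identity is a direct reformulation.

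For the sharpness of the constant $c$ the plan is to test the inequality against indicator functions. Given $\alpha\in[0,1)$, choose a measurable set $A\subset\Omega$ with $P(A)=1-\alpha$ (possible on an atom-less space; otherwise one approximates) and set $Y=\one_{A}$. Then $F_{\left|Y\right|}(y)=\alpha$ for $y\in[0,1)$ and $F_{\left|Y\right|}(y)=1$ for $y\ge1$, so that the representation immediately gives $\left\Vert \one_{A}\right\Vert _{\sigma}=\tau_{\sigma}(\alpha)$. Consequently
\[
\frac{\left\Vert \one_{A}\right\Vert _{\sigma_{2}}}{\left\Vert \one_{A}\right\Vert _{\sigma_{1}}}=\frac{\tau_{\sigma_{2}}(\alpha)}{\tau_{\sigma_{1}}(\alpha)},
\]
and taking the supremum over $\alpha\in[0,1)$ shows that no constant strictly smaller than $c$ can satisfy \eqref{eq:10}.

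The only delicate points I anticipate are, first, re-deriving (or citing) the tail representation of $\left\Vert \cdot\right\Vert _{\sigma}$ — a short Fubini argument applied to $\int_{0}^{1}\sigma(u)F_{\left|Y\right|}^{-1}(u)\,\mathrm{d}u$ with the layer-cake identity $F_{\left|Y\right|}^{-1}(u)=\int_{0}^{\infty}\one_{\{F_{\left|Y\right|}(y)<u\}}\,\mathrm{d}y$ — and, second, the mild technicality that the ratio $\tau_{\sigma_{2}}/\tau_{\sigma_{1}}$ need not attain its supremum; this is handled by working with a maximizing sequence $\alpha_{n}$ and the associated indicators, which makes the bound $c$ sharp as a supremum even if not a maximum. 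Neither of these is a serious obstacle.
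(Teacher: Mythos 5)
Your argument is correct, but it reaches \eqref{eq:10} by a different route than the paper. The paper stays with the quantile representation $\left\Vert Y\right\Vert _{\sigma_{i}}=\int_{0}^{1}F_{\left|Y\right|}^{-1}\left(u\right)\sigma_{i}\left(u\right)\mathrm{d}u$ and applies Riemann--Stieltjes integration by parts twice, comparing the tails $S_{i}\left(u\right)=\int_{u}^{1}\sigma_{i}$ under the positive measure $\mathrm{d}F_{\left|Y\right|}^{-1}$; this produces boundary terms, and the paper must separately note that $F_{\left|Y\right|}^{-1}\left(0\right)\ge0$ and $c\ge1$ (from $\alpha=0$ in \eqref{eq:9}) in order to discard them. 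You instead invoke the layer-cake representation $\left\Vert Y\right\Vert _{\sigma}=\int_{0}^{\infty}\tau_{\sigma}\left(F_{\left|Y\right|}(y)\right)\mathrm{d}y$ from the remark preceding the theorem, under which \eqref{eq:9} is literally the pointwise bound $\tau_{\sigma_{2}}\le c\cdot\tau_{\sigma_{1}}$ on $\left[0,1\right)$ (and trivially at $1$), so that \eqref{eq:10} follows by integrating in $y$ with no boundary terms and no appeal to $c\ge1$. The two arguments are essentially dual (integration by parts versus Tonelli), but yours has cleaner bookkeeping; the price is that you must justify the tail representation, which the paper records without proof --- your sketched Fubini argument with $F_{\left|Y\right|}^{-1}\left(u\right)=\lambda\left(\left\{ y\ge0:F_{\left|Y\right|}(y)<u\right\} \right)$ does the job since all integrands are nonnegative. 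For sharpness both proofs test against indicators and compute the same quantity ($\left\Vert \one_{A}\right\Vert _{\sigma}=\tau_{\sigma}\left(1-P\left(A\right)\right)$, the paper writing this as $\rho_{\sigma}\left(\one_{A^{c}}\right)=\int_{P(A)}^{1}\sigma$); you are in fact slightly more careful than the paper in flagging that realizing every $\alpha$ as $1-P\left(A\right)$ needs an atomless space (or an approximation argument), and that the supremum in \eqref{eq:9} need not be attained, neither of which the paper addresses.
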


\begin{proof}
To accept \eqref{eq:10} define the functions $S_{i}\left(\alpha\right):=\int_{\alpha}^{1}\sigma_{i}\left(u\right)\mathrm{d}u$
($i=1,\,2$), then by Riemann\textendash{}Stieltjes integration by
parts and as $u\mapsto F_{\left|Y\right|}^{-1}\left(u\right)$ is
nondecreasing, 
\begin{align*}
\left\Vert Y\right\Vert _{\sigma_{2}} & =\int_{0}^{1}F_{\left|Y\right|}^{-1}\left(u\right)\sigma_{2}\left(u\right)\mathrm{d}u=-\int_{0}^{1}F_{\left|Y\right|}^{-1}\left(u\right)\mathrm{d}S_{2}\left(u\right)\\
 & =-\left.F_{\left|Y\right|}^{-1}\left(u\right)S_{2}\left(u\right)\right|_{0}^{1}+\int_{0}^{1}S_{2}\left(u\right)\mathrm{d}F_{\left|Y\right|}^{-1}\left(u\right)=F_{\left|Y\right|}^{-1}\left(0\right)+\int_{0}^{1}S_{2}\left(u\right)\mathrm{d}F_{\left|Y\right|}^{-1}\left(u\right)\\
 & \le F_{\left|Y\right|}^{-1}\left(0\right)+c\cdot\int_{0}^{1}S_{1}\left(u\right)\mathrm{d}F_{\left|Y\right|}^{-1}\left(u\right)\\
 & =F_{\left|Y\right|}^{-1}\left(0\right)+c\cdot\left.F_{\left|Y\right|}^{-1}\left(u\right)S_{1}\left(u\right)\right|_{0}^{1}-c\cdot\int_{0}^{1}F_{\left|Y\right|}^{-1}\left(u\right)\mathrm{d}S_{1}\left(u\right)\\
 & =-F_{\left|Y\right|}^{-1}\left(0\right)\left(c-1\right)+c\cdot\int_{0}^{1}F_{\left|Y\right|}^{-1}\left(u\right)\sigma_{1}\left(u\right)\mathrm{d}u\le c\cdot\left\Vert Y\right\Vert _{\sigma_{1}},
\end{align*}
because $F_{\left|Y\right|}^{-1}\left(0\right)\ge0$ and $c\ge1$
(choose $\alpha=0$ in \eqref{eq:9}). 

To accept that $c$ is the smallest constant satisfying \eqref{eq:10}
just consider the random variable $Y=\one_{A^{c}}$, for which $\left\Vert Y\right\Vert _{\sigma}=\rho_{\sigma}\left(\one_{A^{c}}\right)=\int_{P\left(A\right)}^{1}\sigma\left(u\right)\mathrm{d}u$.
The assertion follows, as the measurable set $A$ may be chosen arbitrarily.
\end{proof}
It is a particular consequence of \eqref{eq:10} that 
\[
\AVaR_{\alpha_{1}}\left(\left|Y\right|\right)\le\AVaR_{\alpha_{2}}\left(\left|Y\right|\right)\le\frac{1-\alpha_{1}}{1-\alpha_{2}}\AVaR_{\alpha_{1}}\left(\left|Y\right|\right),
\]
which holds whenever $\alpha_{1}\le\alpha_{2}<1$. It should be noted,
however, that $\AVaR_{\alpha_{1}}\left(Y\right)\le\AVaR_{\alpha_{2}}\left(Y\right)\not\le\frac{1-\alpha_{1}}{1-\alpha_{2}}\AVaR_{\alpha_{1}}\left(Y\right)$
in general.

\medskip{}

The following representation result for spectral risk measures is
well known for $\sigma$ in an appropriate space. We extend it to
$L_{\sigma}$, the result will be used in the sequel.
\begin{prop}[Representation of the spectral risk measure]
\label{prop:Representation}$\rho_{\sigma}$ has the equivalent representation~%
\footnote{A random variable $U$ is uniformly distributed if $P\left(U\le u\right)=u$
whenever $u\in\left[0,\,1\right]$.%
} 
\begin{equation}
\rho_{\sigma}\left(Y\right)=\sup\left\{ \mathbb{E}\, Y\cdot\sigma\left(U\right):\, U\text{ is uniformly distributed}\right\} \label{eq:11}
\end{equation}
on $L_{\sigma}$.\end{prop}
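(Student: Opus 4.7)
The identity is a Hardy--Littlewood rearrangement statement in disguise: the supremum on the right is exactly the maximum of $\mathbb{E}(YZ)$ over all random variables $Z$ with the same law as $\sigma(U)$, and this maximum is attained when $Y$ and $Z$ are \emph{comonotonic}. My plan is to split the verification of $\rho_\sigma(Y)=\sup\mathbb{E}\,Y\cdot\sigma(U)$ into (i) an upper bound via rearrangement, (ii) an integrability check guaranteeing that the right-hand side is a well-defined real number for every $Y\in L_\sigma$, and (iii) attainment by an explicit comonotonic coupling.

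First, for an arbitrary uniform $U$, I would invoke the Hardy--Littlewood--P\'olya rearrangement inequality
\[
\mathbb{E}\,XZ\;\le\;\int_{0}^{1}F_{X}^{-1}(u)\,F_{Z}^{-1}(u)\,\mathrm{d}u
\]
with $X=Y$ and $Z=\sigma(U)$. Since $\sigma$ is nondecreasing and $U$ is uniform, the quantile function of $\sigma(U)$ coincides with $\sigma$ almost everywhere, so the bound becomes $\mathbb{E}\,Y\cdot\sigma(U)\le\int_{0}^{1}F_{Y}^{-1}(u)\sigma(u)\,\mathrm{d}u=\rho_{\sigma}(Y)$. Applied to $|Y|$ in place of $Y$ the same inequality gives $\mathbb{E}\bigl|Y\cdot\sigma(U)\bigr|\le\rho_{\sigma}(|Y|)=\|Y\|_{\sigma}<\infty$, which shows that the expectations appearing on the right of \eqref{eq:11} are finite real numbers whenever $Y\in L_{\sigma}$, so that the supremum is meaningful.

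Second, for the converse inequality I would construct a uniform $U$ that is comonotonic with $Y$, that is, a uniform $U$ satisfying $Y=F_{Y}^{-1}(U)$ almost surely. On an atom-less probability space such a $U$ always exists (pick $U:=F_{Y}(Y)$ when $F_{Y}$ is continuous, and otherwise use the standard trick of distributing the mass of atoms uniformly with the help of an auxiliary independent uniform variable). For this choice both $Y$ and $\sigma(U)$ are nondecreasing functions of the same uniform $U$, hence comonotonic, and equality holds in the rearrangement bound:
\[
\mathbb{E}\,Y\cdot\sigma(U)=\int_{0}^{1}F_{Y}^{-1}(u)\,\sigma(u)\,\mathrm{d}u=\rho_{\sigma}(Y).
\]
Combining the two directions yields the asserted supremum representation, with the sup actually attained.

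The main obstacle is the construction of the comonotonic uniform $U$ in step (iii), because it is exactly here that the richness of the underlying probability space enters; this is the reason the atom-less assumption carried over from Kusuoka's theorem is needed. A secondary subtlety is the extension from bounded $Y$ (where the rearrangement inequality is classical and straightforward) to all $Y\in L_{\sigma}$: this is handled by the integrability estimate $\mathbb{E}|Y\sigma(U)|\le\|Y\|_{\sigma}$ above, which lets dominated convergence transfer the identity from truncations $Y_{n}:=(-n)\vee Y\wedge n$ to $Y$ itself, using continuity of $\rho_{\sigma}$ with respect to $\|\cdot\|_{\sigma}$ from Proposition~\ref{prop:Continuity}.
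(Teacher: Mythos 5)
Your proposal is correct and follows essentially the same route as the paper: identify the quantile function of $\sigma(U)$ with $\sigma$, apply the Hardy--Littlewood--P\'olya rearrangement inequality for the upper bound, and attain equality via a comonotone coupling of $Y$ with $U$. The extra care you take with integrability on $L_{\sigma}$ and with the existence of the comonotone uniform on an atom-less space fills in details the paper leaves implicit, while the closing truncation/dominated-convergence remark is unnecessary once the rearrangement bound is established directly for $Y\in L_{\sigma}$.
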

\begin{rem}
For the Average Value-at-Risk it holds in particular that 
\begin{equation}
\AVaR_{\alpha}\left(Y\right)=\sup\left\{ \mathbb{E}\, Y\cdot Z:\,\mathbb{E}\, Z=1,\,0\le Z\le\frac{1}{1-\alpha}\right\} \label{eq:12}
\end{equation}
in view of the spectral function \eqref{eq:AVaR}.\end{rem}
\begin{proof}
Consider the random variable $Z=\sigma\left(U\right)$ for a uniformly
distributed random variable $U$, then $P\left(Z\le\sigma\left(\alpha\right)\right)=P\left(\sigma\left(U\right)\le\sigma\left(\alpha\right)\right)\ge P\left(U\le\alpha\right)=\alpha$,
that is $\VaR_{\alpha}\left(Z\right)\ge\sigma\left(\alpha\right)$.
But as $1=\int_{0}^{1}\sigma\left(\alpha\right)\mathrm{d}\alpha\le\int_{0}^{1}\VaR_{\alpha}\left(\sigma\left(U\right)\right)\mathrm{d}\alpha=\mathbb{E}\,\sigma\left(U\right)=\int_{0}^{1}\sigma(p)\,\mathrm{d}p=1$
it follows that 
\begin{equation}
\VaR_{\alpha}\left(Z\right)=\sigma\left(\alpha\right).\label{eq:13}
\end{equation}
Now $F_{Y}^{-1}\left(\cdot\right)$ is an increasing function, and
so is $\sigma\left(\cdot\right)$. By the Hardy\textendash{}Littlewood
rearrangement inequality (cf. \cite{Hoeffding} and \cite[Proposition 1.8]{PflugRomisch2007}
for the respective rearrangement inequality, sometimes also referred
to as \emph{Hardy}\textendash{}\emph{Littlewood}\textendash{}\emph{Pólya
inequality}, cf.~\cite{Dana2005}) it follows thus that 
\[
\mathbb{E}\, Y\cdot\sigma\left(U\right)\le\int_{0}^{1}F_{Y}^{-1}\left(\alpha\right)\sigma\left(\alpha\right)\mathrm{d}\alpha.
\]
However, if $Y$ and $U$ are coupled in a co-monotone way, then equality
is attained, that is $\mathbb{E}\, Y\cdot\sigma\left(U\right)=\int_{0}^{1}F_{Y}^{-1}\left(\alpha\right)\sigma\left(\alpha\right)\mathrm{d}\alpha$.
This proves the statement in view of the definition of the spectral
risk measure,~\eqref{eq:SRM}.
\end{proof}
The next theorem demonstrates that the spaces $L_{\sigma}$ really
add something to $L^{p}$ spaces, the space $L_{\sigma}$ is \emph{strictly
larger} than $L^{p}$.
\begin{thm}[$L_{\sigma}$ is larger than $L^{p}$]
\label{thm:LInfty}The following hold true:
\begin{enumerate}
\item Suppose that $\sigma\in L^{q}$ for some $1\le q<\infty$. Then the
space of natural domain $L_{\sigma}$ is strictly larger than $L^{p}$,
$L^{p}\subsetneqq L_{\sigma}$ ($\frac{1}{p}+\frac{1}{q}=1$).
\item In particular the space of natural domain $L_{\sigma}$ is (always)
strictly larger than $L^{\infty}$, $L^{\infty}\subsetneqq L_{\sigma}$
($q=1$). 
\end{enumerate}
\end{thm}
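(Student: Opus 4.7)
The plan is to exhibit, for each admissible $(p,q)$, an explicit $Y\in L_{\sigma}\setminus L^{p}$; the other direction $L^{p}\subseteq L_{\sigma}$ is already delivered by Theorem~\ref{thm:Inclusions}. Since both norms depend only on the law of $|Y|$, I would specify $Y:=\phi(U)$ via a nondecreasing quantile $\phi\colon[0,1)\to[0,\infty)$ and a uniform $U$, reducing the task to constructing $\phi$ with
\[
\int_{0}^{1}\sigma(u)\phi(u)\,\mathrm{d}u<\infty\qquad\text{and}\qquad\int_{0}^{1}\phi(u)^{p}\,\mathrm{d}u=\infty,
\]
where the second condition is replaced by ``$\phi$ unbounded'' when $p=\infty$.

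For the headline case $p=\infty$, $q=1$, I would set $\tau(u):=\int_{u}^{1}\sigma$ and $\phi(u):=\log\bigl(1/\tau(u)\bigr)$. Monotonicity of $\tau$ gives monotonicity of $\phi$, and $\tau(1)=0$ gives $\phi(u)\to\infty$, so $Y\notin L^{\infty}$. The substitution $\mathrm{d}t=-\sigma\,\mathrm{d}u$ collapses the $L_{\sigma}$-integral to $\int_{0}^{1}\log(1/t)\,\mathrm{d}t=1$, hence $\|Y\|_{\sigma}=1$ and $Y\in L_{\sigma}\setminus L^{\infty}$.

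For finite $p\in(1,\infty)$, guided by the equality case of H\"older's inequality I would take $\phi(u):=\sigma(u)^{q/p}g(u)$ with $g\colon[0,1)\to[1,\infty)$ nondecreasing. Using the identity $1+q/p=q$ (equivalent to $1/p+1/q=1$), the two requirements translate into statements about the finite, atomless measure $\mathrm{d}\nu:=\sigma^{q}\,\mathrm{d}u$, namely $\int g\,\mathrm{d}\nu<\infty$ and $\int g^{p}\,\mathrm{d}\nu=\infty$. I would then partition a right-neighbourhood of $1$ (where $\sigma>0$, since $\sigma$ is nondecreasing with $\int\sigma=1$) into intervals $I_{n}$ with $\nu(I_{n})=2^{-n}\nu([0,1])$, and set $g\equiv c_{n}:=2^{n/p}$ on $I_{n}$. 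The sums $\sum c_{n}\nu(I_{n})\propto\sum 2^{-n/q}$ and $\sum c_{n}^{p}\nu(I_{n})\propto\sum 1$ then converge, respectively diverge, as required.

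The key technical point is that $\phi$ must be nondecreasing to serve as a quantile; this is automatic here because both $\sigma^{q/p}$ and $g$ are nondecreasing and nonnegative, so is their product. A minor bookkeeping issue---$\sigma$ possibly vanishing on an initial interval $[0,u_{0}]$---would be handled by performing the construction on $(u_{0},1)$ and extending $\phi$ by the constant $\phi(u_{0}^{+})$ on $[0,u_{0}]$, which affects neither integral.
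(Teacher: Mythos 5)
Your construction is correct and essentially reproduces the paper's own argument: for $1<q<\infty$ the paper likewise takes $Y=\sigma(U)^{q-1}\tau(U)$ with $\tau$ a nondecreasing step function that is constant on intervals of prescribed $\sigma^{q}\,\mathrm{d}u$-mass (normalised via the zeta function where you normalise dyadically), and for $q=1$ it uses a step function with dyadic tail masses of $\sigma\,\mathrm{d}u$ where you use the closed form $\log\bigl(1/\int_{u}^{1}\sigma(t)\,\mathrm{d}t\bigr)$. Both of your variants check out, including the required monotonicity of the constructed quantile functions, so no changes are needed.
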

\begin{rem}
It should be noted that the statement of the latter theorem does not
hold for $\sigma\in L^{\infty}$: In this situation $\rho_{\sigma}$
is well defined on $L^{1}$, and $L_{\sigma}=L^{1}$ by the preceding
Theorem~\ref{thm:Inclusions}, \ref{enu:Lp}.\end{rem}
\begin{proof}
To prove the first assertion assume that $\sigma\in L^{q}$ for $1<q<\infty$.
Consider the uniquely determined numbers $t_{0}:=0<t_{1}<t_{2}<\dots<1$
for which $\int_{0}^{t_{n}}\sigma(u)^{q}\mathrm{d}u=\frac{\left\Vert \sigma\right\Vert _{q}^{q}}{\zeta\left(p+1\right)}\sum_{j=1}^{n}\frac{1}{j^{p+1}}$
and observe that $\int_{t_{n-1}}^{t_{n}}\sigma(u)^{q}\mathrm{d}u=\frac{\left\Vert \sigma\right\Vert _{q}^{q}}{\zeta\left(p+1\right)}\frac{1}{n^{p+1}}$.%
\footnote{$\zeta\left(p\right):=\sum_{n=1}^{\infty}\frac{1}{n^{p}}$ is Riemann's
Zeta function, the series converges whenever $p>1$.%
}  Define the function 
\[
\tau\left(u\right):=\begin{cases}
n & \text{ if }\end{cases}t_{n-1}\le u<t_{n},
\]
let $U$ be uniformly distributed and consider the random variable
\begin{equation}
Y:=\sigma\left(U\right)^{q-1}\cdot\tau\left(U\right).\label{eq:14}
\end{equation}
Note, by \eqref{eq:11}, that 
\begin{eqnarray*}
\rho_{\sigma}\left(Y\right) & = & \mathbb{E}\,\sigma\left(U\right)Y=\mathbb{E}\,\sigma\left(U\right)\sigma\left(U\right)^{q-1}\tau\left(U\right)=\mathbb{E}\,\sigma\left(U\right)^{q}\tau\left(U\right)\\
 & = & \int_{0}^{1}\sigma\left(u\right)^{q}\tau\left(u\right)\,\mathrm{d}u=\sum_{n=1}^{\infty}\int_{t_{n-1}}^{t_{n}}\sigma\left(u\right)^{q}\cdot n\,\mathrm{d}u\\
 & = & \frac{\left\Vert \sigma\right\Vert _{q}^{q}}{\zeta\left(p+1\right)}\sum_{n=1}^{\infty}\frac{n}{n^{p+1}}=\frac{\left\Vert \sigma\right\Vert _{q}^{q}}{\zeta\left(p+1\right)}\sum_{n=1}^{\infty}\frac{1}{n^{p}}=\left\Vert \sigma\right\Vert _{q}^{q}\frac{\zeta\left(p\right)}{\zeta\left(p+1\right)}<\infty,
\end{eqnarray*}
because $p>1$. Next, 
\begin{eqnarray*}
\left\Vert Y\right\Vert _{p}^{p} & = & \mathbb{E}\left|Y\right|^{p}=\int_{0}^{1}\sigma\left(u\right)^{\left(q-1\right)p}\tau\left(u\right)^{p}\mathrm{d}u\\
 & = & \int_{0}^{1}\sigma\left(u\right)^{q}\tau\left(u\right)^{p}\mathrm{d}u=\sum_{n=1}^{\infty}\int_{t_{n-1}}^{t_{n}}\sigma\left(u\right)^{q}\cdot n^{p}\,\mathrm{d}u\\
 & = & \frac{\left\Vert \sigma\right\Vert _{q}^{q}}{\zeta\left(p+1\right)}\sum_{n=1}^{\infty}\frac{n^{p}}{n^{p+1}}=\frac{\left\Vert \sigma\right\Vert _{q}^{q}}{\zeta\left(p+1\right)}\sum_{n=1}^{\infty}\frac{1}{n}=\infty.
\end{eqnarray*}
Hence, $Y\in L_{\sigma}$, but $Y\notin L^{p}$.

\medskip{}
The second statement of the theorem is actually the first statement
with $q=1$, but the above proof needs a modification: To accept it
define, as above, an increasing sequence of values by $t_{0}:=0<t_{1}<t_{2}<\dots<1$
satisfying $\int_{0}^{t_{n}}\sigma(t)\mathrm{d}t\ge1-2^{-n}$. Note,
that 
\[
\int_{t_{n-1}}^{t_{n}}\sigma(u)\mathrm{d}u\le\int_{t_{n-1}}^{1}\sigma\left(u\right)\mathrm{d}u=1-\int_{0}^{t_{n-1}}\sigma(u)\mathrm{d}u\le2^{1-n}.
\]
Define moreover the increasing function 
\[
\tau\left(\cdot\right):=\sum_{n=0}\one_{\left[t_{n},\,1\right]}\left(\cdot\right)
\]
(i.e. $\tau\left(t\right)=n$ if $t_{n-1}\le t<t_{n}$) and observe
that $\tau\nearrow\infty$ whenever $t\to1$. 

Now let $U$ be a uniformly distributed random variable and set $Y:=\tau\left(U\right)$.
Then 
\begin{align*}
\rho_{\sigma}\left(Y\right)= & \int_{0}^{1}\sigma(u)\tau(u)\mathrm{d}u=\sum_{n=1}\int_{t_{n-1}}^{t_{n}}\sigma(u)\tau(u)\mathrm{d}u\\
 & =\sum_{n=1}n\cdot\int_{t_{n-1}}^{t_{n}}\sigma(u)\mathrm{d}u\le\sum_{n=1}n\cdot2^{1-n}=4<\infty,
\end{align*}
so $Y\in L_{\sigma}$. But $Y\notin L^{\infty}$, because $P\left(Y\ge n\right)\ge1-t_{n-1}>0$
by definition of $\tau$. \end{proof}
\begin{rem}
Notably the preceding proof applies for the random variable $Y=\sigma\left(U\right)^{q-1}\cdot\tau\left(U\right)^{\alpha}$
in \eqref{eq:14} equally well whenever $1\le\alpha<p$, such that
$L_{\sigma}$ is larger than $L^{p}$ by an entire infinite dimensional
manifold.

\medskip{}

\end{rem}
It was demonstrated above that the space $L_{\sigma}$ is contained
in $L^{1}$. The above inequality \eqref{eq:sigmaLp}, $\left\Vert \cdot\right\Vert _{1}\le\left\Vert \cdot\right\Vert _{\sigma}$,
allows to prove an even much stronger result: a finite valued risk
measure cannot be considered on a space larger than $L^{1}$. This
is the content of the following theorem, which was communicated to
the author by Prof. Alexander Shapiro (Georgia Tech). In brief: it
does not make sense to consider risk measures on a space larger than
$L^{1}$.
\begin{thm}
\label{thm:Shapiro}Let $L\subset L^{0}$ be a vector space collecting
$\mathbb{R}$\nobreakdash-valued random variables on $\left(\left[0,1\right],\,\mathscr{B},\,\lambda\right)$
(the standard probability space equipped with its Borel sets) such
that $L\supsetneqq L^{1}$ and $\left|Y\right|\in L$, if $Y\in L$.
Then there does not exist a version independent, finite valued risk
measure on $L$. \end{thm}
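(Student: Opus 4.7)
The plan is to argue by contradiction and establish the pointwise inequality $\rho(X)\ge\mathbb{E}X$ for every $X\in L$. Once this is in hand, choose any $Y_0\in L\setminus L^{1}$; the hypothesis gives $|Y_0|\in L$ with $\mathbb{E}|Y_0|=+\infty$, whence $\rho(|Y_0|)\ge\mathbb{E}|Y_0|=+\infty$ contradicts the finiteness of $\rho$ on $L$.

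To obtain $\rho\ge\mathbb{E}$, I first restrict to $L^{\infty}$. Because $L^{\infty}\subset L^{1}\subsetneqq L$, the restriction $\rho|_{L^{\infty}}$ is a finite valued, version independent risk measure on $L^{\infty}$ over the atomless probability space $([0,1],\mathscr{B},\lambda)$, so Kusuoka's Theorem~\ref{thm:Kusuoka} produces a family $\mathscr{M}$ of probability measures on $[0,1]$ with
\[
\rho(X)=\sup_{\mu\in\mathscr{M}}\int_{0}^{1}\AVaR_{\alpha}(X)\,\mu(\mathrm{d}\alpha)\qquad(X\in L^{\infty}).
\]
Since $F_{X}^{-1}$ is nondecreasing, the upper-tail average $\AVaR_{\alpha}(X)=\frac{1}{1-\alpha}\int_{\alpha}^{1}F_{X}^{-1}(u)\,\mathrm{d}u$ dominates $\mathbb{E}X=\int_{0}^{1}F_{X}^{-1}(u)\,\mathrm{d}u$ for every $\alpha\in[0,1)$, and $\AVaR_{1}(X)=\esssup X\ge\mathbb{E}X$ as well. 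Integrating against any probability measure $\mu$ on $[0,1]$ and taking the supremum yields $\rho(X)\ge\mathbb{E}X$ for all $X\in L^{\infty}$.

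To extend this to an arbitrary nonnegative $Z\in L$, I use monotone truncation: the cutoff $Z\wedge n$ lies in $L^{\infty}\subset L$, so monotonicity~\ref{enu:Monotonicity} and the previous step give
\[
\rho(Z)\ge\rho(Z\wedge n)\ge\mathbb{E}(Z\wedge n).
\]
Letting $n\to\infty$ and invoking monotone convergence (valid since $Z\ge0$) delivers $\rho(Z)\ge\mathbb{E}Z$. Specializing to $Z=|Y_{0}|$ closes the contradiction.

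The main obstacle is justifying that any version independent risk measure must dominate expectation; version independence is indispensable here, because without it one can easily build convex, monotone, translation equivariant functionals that remain finite on spaces much larger than $L^{1}$ (for instance, pointwise evaluation at a fixed $\omega$). Kusuoka's representation carries all the weight: once it is invoked on $L^{\infty}$, the rest of the argument is a routine monotone-convergence step.
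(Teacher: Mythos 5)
Your proof is correct and follows essentially the same route as the paper's: restrict to $L^{\infty}$, invoke Kusuoka's representation to show $\rho$ dominates the expectation there (the paper phrases this as $\rho_{\sigma}(Y_n)=\left\Vert Y_n\right\Vert _{\sigma}\ge\left\Vert Y_n\right\Vert _{1}$ for a fixed spectrum, you as $\AVaR_{\alpha}\ge\mathbb{E}$ integrated against $\mu$), then truncate $\left|Y\right|$ at level $n$ and let $n\to\infty$ via monotonicity and monotone convergence. The arguments are interchangeable.
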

\begin{proof}
Suppose that $\rho:\, L\to\mathbb{R}$ is a version independent, and
finite valued risk measure on $L$. Restricted to $L^{\infty}$, Kusuoka's
theorem (Theorem~\ref{thm:Kusuoka}) applies and $\rho$ takes the
form $\rho\left(\cdot\right)=\sup_{\sigma\in\mathscr{S}}\rho_{\sigma}\left(\cdot\right)$.
Choose $Y\in L\backslash L^{1}$, that is $\mathbb{E}\left|Y\right|=\infty$,
or  $\int_{0}^{p}F_{\left|Y\right|}^{-1}\left(u\right)\mathrm{d}u\to\infty$
whenever $p\to1$.

Next, pick any $\sigma\in\mathscr{S}$. Define $Y_{n}:=\min\left\{ n,\,\left|Y\right|\right\} $
and observe that $\rho\left(Y_{n}\right)\le\rho\left(\left|Y\right|\right)$
by monotonicity. Note that $Y_{n}\in L^{\infty}$ and hence, by Kusuoka's
representation, \eqref{eq:sigmaLp} and the particular choice of $Y$,
\[
\rho\left(\left|Y\right|\right)\ge\rho\left(Y_{n}\right)\ge\rho_{\sigma}\left(Y_{n}\right)=\left\Vert Y_{n}\right\Vert _{\sigma}\ge\left\Vert Y_{n}\right\Vert _{1}\ge\int_{0}^{P\left(\left|Y\right|\le n\right)}F_{\left|Y\right|}^{-1}\left(u\right)\,\mathrm{d}u\to\infty,
\]
as $n\rightarrow\infty$. Hence, $\rho$ is not finite valued on $L$. \end{proof}
\begin{thm}
$\left(L_{\sigma},\,\left\Vert \cdot\right\Vert _{\sigma}\right)$
is a Banach space over $\mathbb{R}$.\end{thm}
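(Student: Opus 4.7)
The plan is to prove completeness of $\left(L_\sigma,\,\|\cdot\|_\sigma\right)$ by the standard Cauchy-sequence argument, using $L^1$-completeness as a stepping stone and then upgrading $L^1$ convergence to $\|\cdot\|_\sigma$ convergence via a Fatou-type inequality derived from the dual representation \eqref{eq:11}.

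First I would pick an arbitrary Cauchy sequence $\left(Y_{n}\right)_{n\in\mathbb{N}}$ in $\left(L_{\sigma},\,\|\cdot\|_{\sigma}\right)$. Because $\|Y\|_{1}\le\|Y\|_{\sigma}$ by Theorem~\ref{thm:Inclusions}, the sequence $(Y_n)$ is also Cauchy in $L^{1}$; completeness of $L^{1}$ yields a random variable $Y\in L^{1}$ with $\|Y_n - Y\|_1 \to 0$. Passing to a subsequence $\left(Y_{n_k}\right)_k$ I may assume $Y_{n_k}\to Y$ almost surely, and in particular $|Y_n-Y_{n_k}|\to|Y_n-Y|$ a.s.\ as $k\to\infty$ for every fixed $n$.

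The key technical step is a Fatou-type lower semicontinuity statement for $\|\cdot\|_{\sigma}$: whenever $(X_k)$ is a sequence of random variables with $|X_k|\to|X|$ almost surely, then
\[
\|X\|_{\sigma}\le\liminf_{k\to\infty}\|X_k\|_{\sigma}.
\]
This I would prove using Proposition~\ref{prop:Representation}: for every uniformly distributed $U$, classical Fatou's lemma applied to the nonnegative random variables $|X_k|\sigma(U)$ yields $\mathbb{E}\,|X|\sigma(U)\le\liminf_{k}\mathbb{E}\,|X_k|\sigma(U)\le\liminf_{k}\|X_k\|_{\sigma}$. Taking the supremum over all uniformly distributed $U$ on the left-hand side and invoking~\eqref{eq:11} gives the desired inequality. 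This is the only nontrivial ingredient; I expect it to be the main obstacle, since one might worry that the supremum over $U$ interacts badly with $\liminf$, but the one-sided direction needed here goes through without difficulty.

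Combining the two, fix $\varepsilon>0$ and choose $N$ so that $\|Y_m-Y_n\|_{\sigma}<\varepsilon$ for all $m,n\ge N$. For $n\ge N$ and $k$ large enough (so $n_k\ge N$) one has $\|Y_n-Y_{n_k}\|_{\sigma}<\varepsilon$; applying the Fatou-type inequality to $X_k:=Y_n-Y_{n_k}$ (which satisfies $|X_k|\to|Y_n-Y|$ a.s.) yields
\[
\|Y_n-Y\|_{\sigma}\le\liminf_{k\to\infty}\|Y_n-Y_{n_k}\|_{\sigma}\le\varepsilon.
\]
In particular $\|Y\|_{\sigma}\le\|Y_N\|_{\sigma}+\varepsilon<\infty$, so $Y\in L_{\sigma}$, and the above estimate simultaneously shows $Y_n\to Y$ in $\|\cdot\|_{\sigma}$. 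This proves completeness and hence that $\left(L_{\sigma},\,\|\cdot\|_{\sigma}\right)$ is a Banach space.
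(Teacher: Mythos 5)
Your proof is correct, and it differs from the paper's in two respects worth noting. First, the mechanism of the Fatou step: the paper deduces from $L^{1}$-convergence that the quantile functions converge, $F_{\left|Y_{k}\right|}^{-1}\to F_{\left|Y\right|}^{-1}$, and applies Fatou's lemma on $\left[0,1\right]$ to the integrands $\sigma\left(t\right)F_{\left|Y_{k}\right|}^{-1}\left(t\right)$; you instead pass to an almost surely convergent subsequence and apply Fatou on $\Omega$ through the supremum representation \eqref{eq:11}, which yields the same lower semicontinuity $\left\Vert X\right\Vert _{\sigma}\le\liminf_{k}\left\Vert X_{k}\right\Vert _{\sigma}$. (One small point: Proposition~\ref{prop:Representation} is stated on $L_{\sigma}$, whereas you invoke the representation for $\left|X\right|$ before knowing $X\in L_{\sigma}$; this is harmless because for nonnegative random variables the co-monotone coupling in its proof gives $\sup_{U}\mathbb{E}\left|X\right|\sigma\left(U\right)=\left\Vert X\right\Vert _{\sigma}$ with both sides possibly $+\infty$, but it deserves a remark.) Second, and more substantively, your argument goes further than the paper's: the paper only verifies that the $L^{1}$-limit $Y$ satisfies $\left\Vert Y\right\Vert _{\sigma}<\infty$, i.e.\ $Y\in L_{\sigma}$, and stops there, whereas completeness also requires $\left\Vert Y_{n}-Y\right\Vert _{\sigma}\to0$. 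Your final step --- applying the Fatou-type inequality to $X_{k}:=Y_{n}-Y_{n_{k}}$ to conclude $\left\Vert Y_{n}-Y\right\Vert _{\sigma}\le\varepsilon$ for $n\ge N$ --- supplies exactly this missing convergence, so your write-up is the more complete of the two.
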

\begin{proof}
It remains to be shown that $\left(L_{\sigma},\,\left\Vert \cdot\right\Vert _{\sigma}\right)$
is complete. For this let $\left(Y_{k}\right)_{k}$ be a Cauchy sequence
for $\left\Vert \cdot\right\Vert _{\sigma}$. By \eqref{eq:sigmaLp}
the sequence $\left(Y_{k}\right)_{k}$ is a Cauchy sequence for $\left\Vert \cdot\right\Vert _{1}$
as well, and from completeness of $L^{1}$ it follows that there exists
a limit $Y\in L^{1}$. We shall show that $Y\in L_{\sigma}$.

It follows from convergence in $L^{1}$ that $\left(Y_{k}\right)_{k}$
converges in distribution, that is $F_{Y_{k}}\left(y\right)\rightarrow F_{Y}\left(y\right)$
for every point $y$ where $F_{Y}$ is continuous and moreover $F_{\left|Y_{k}\right|}^{-1}\left(\cdot\right)\rightarrow F_{\left|Y\right|}^{-1}\left(\cdot\right)$
(cf. \cite[Chapter 21]{vdVaart}). Now 
\begin{eqnarray*}
\left\Vert Y\right\Vert _{\sigma} & = & \rho_{\sigma}\left(\left|Y\right|\right)=\int_{0}^{1}\sigma\left(t\right)F_{\left|Y\right|}^{-1}\left(t\right)\mathrm{d}t=\int_{0}^{1}\sigma\left(t\right)\lim_{k\to\infty}F_{\left|Y_{k}\right|}^{-1}\left(t\right)\mathrm{d}t\\
 & = & \int_{0}^{1}\liminf_{k\to\infty}\sigma\left(t\right)F_{\left|Y_{k}\right|}^{-1}\left(t\right)\mathrm{d}t\le\liminf_{k\to\infty}\int_{0}^{1}\sigma\left(t\right)F_{\left|Y_{k}\right|}^{-1}\left(t\right)\mathrm{d}t=\liminf_{k\to\infty}\left\Vert Y_{k}\right\Vert _{\sigma}
\end{eqnarray*}
by Fatou's Lemma, which is applicable because $F_{\left|Y_{k}\right|}^{-1}\left(\cdot\right)\ge0$. 

As $\left(Y_{k}\right)_{k}$ is a Cauchy sequence one may pick $k^{*}\in\mathbb{N}$
such that $\left\Vert Y_{k}-Y_{k^{*}}\right\Vert _{\sigma}<1$ for
all $k>k^{*}$, and hence $\left\Vert Y_{k}\right\Vert _{\sigma}\le\left\Vert Y_{k^{*}}\right\Vert _{\sigma}+\left\Vert Y_{k}-Y_{k^{*}}\right\Vert _{\sigma}<\left\Vert Y_{k^{*}}\right\Vert _{\sigma}+1<\infty$
by the triangle inequality. The sequence $\left(Y_{k}\right)_{k}$
thus is uniformly bounded in its norm. Hence, 
\[
\left\Vert Y\right\Vert _{\sigma}\le\liminf_{k\to\infty}\left\Vert Y_{k}\right\Vert _{\sigma}\le\left\Vert Y_{k^{*}}\right\Vert _{\sigma}+1<\infty,
\]
 that is $Y\in L_{\sigma}$ and $L_{\sigma}$ thus is complete.\end{proof}
\begin{example}
Consider the spectrum $\sigma\left(\alpha\right)=\frac{1}{2\sqrt{1-\alpha}}$.
It should be noted that $L_{\sigma}\supset\bigcup_{p>2}L^{p}$, and
$\left\Vert \cdot\right\Vert _{\sigma}$ provides a reasonable norm
on that set. 

Restricted to $L^{p}$, for some $p>2$, the open mapping theorem
(cf. \cite{Rudin1973} or \cite{Aliprantis2006}) insures that the
norms are equivalent, that is there are constants $c_{1}$ and $c_{2}$
such that 
\[
c_{1}\cdot\left\Vert Y\right\Vert _{p}\le\left\Vert Y\right\Vert _{\sigma}\le c_{2}\cdot\left\Vert Y\right\Vert _{p}\qquad(Y\in L^{p}\subset L_{\sigma}).
\]
The latter inequalities hold just for $Y\in L^{p}$, but not for $Y\in L_{\sigma}$. \end{example}
\begin{prop}
Measurable, simple (step) functions are dense in $L_{\sigma}$, and
in particular $L^{\infty}$ is dense in $L_{\sigma}$. \end{prop}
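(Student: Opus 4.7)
The plan is to do a two\nobreakdash-step approximation: first approximate an arbitrary $Y\in L_{\sigma}$ by a bounded random variable via truncation, and then approximate the bounded random variable by simple functions in the sup norm, invoking the elementary bound $\left\Vert\cdot\right\Vert_{\sigma}\le\left\Vert\cdot\right\Vert_{\infty}$ which holds because
\[
\left\Vert Y\right\Vert_{\sigma}=\int_{0}^{1}\sigma(u)F_{\left|Y\right|}^{-1}(u)\,\mathrm{d}u\le\left\Vert Y\right\Vert_{\infty}\int_{0}^{1}\sigma(u)\,\mathrm{d}u=\left\Vert Y\right\Vert_{\infty}.
\]

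For the first step, given $Y\in L_{\sigma}$ set
\[
Y_{n}:=\sign(Y)\cdot\min\bigl\{\left|Y\right|,n\bigr\},
\]
so that $Y_{n}\in L^{\infty}$ and $\left|Y-Y_{n}\right|=(\left|Y\right|-n)^{+}$. Since the map $x\mapsto(x-n)^{+}$ is nondecreasing on $[0,\infty)$ and $\left|Y\right|\ge 0$, the quantile function of $\left|Y-Y_{n}\right|$ is $(F_{\left|Y\right|}^{-1}(u)-n)^{+}$. Consequently
\[
\left\Vert Y-Y_{n}\right\Vert_{\sigma}=\int_{0}^{1}\sigma(u)\bigl(F_{\left|Y\right|}^{-1}(u)-n\bigr)^{+}\,\mathrm{d}u.
\]
The integrand converges pointwise to $0$ for every $u\in[0,1)$ with $F_{\left|Y\right|}^{-1}(u)<\infty$ (which is almost every $u$, since $Y$ is real\nobreakdash-valued), and it is dominated by $\sigma(u)F_{\left|Y\right|}^{-1}(u)$, whose integral is exactly $\left\Vert Y\right\Vert_{\sigma}<\infty$. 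Dominated convergence then yields $\left\Vert Y-Y_{n}\right\Vert_{\sigma}\to 0$, establishing density of $L^{\infty}$ in $L_{\sigma}$.

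For the second step, fix $Y_{n}\in L^{\infty}$ and let $s_{k}$ be a sequence of measurable simple functions with $\left\Vert Y_{n}-s_{k}\right\Vert_{\infty}\to 0$ (this is the standard fact that bounded measurable functions are uniform limits of simple functions, obtained e.g.\ by partitioning the range $[-\left\Vert Y_{n}\right\Vert_{\infty},\left\Vert Y_{n}\right\Vert_{\infty}]$ into intervals of length $1/k$). The bound $\left\Vert\cdot\right\Vert_{\sigma}\le\left\Vert\cdot\right\Vert_{\infty}$ recorded above then gives $\left\Vert Y_{n}-s_{k}\right\Vert_{\sigma}\to 0$. Combining the two steps with a diagonal argument (or just a two\nobreakdash-$\varepsilon$/2 choice) produces a simple function arbitrarily close to $Y$ in $\left\Vert\cdot\right\Vert_{\sigma}$. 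The main technical point on which I would be most careful is the identification of the quantile of $(\left|Y\right|-n)^{+}$ and the verification that $\sigma(u)F_{\left|Y\right|}^{-1}(u)$ is the correct, $\sigma$\nobreakdash-integrable majorant: everything else is then a routine application of dominated convergence and the elementary sup\nobreakdash-norm estimate.
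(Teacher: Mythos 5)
Your proof is correct, and it organizes the argument differently from the paper. The paper works in one shot on the quantile scale: it chooses $t_{0},t_{1}$ so that the tail integrals $\int_{0}^{t_{0}}\sigma(u)F_{Y}^{-1}(u)\,\mathrm{d}u$ and $\int_{t_{1}}^{1}\sigma(u)F_{Y}^{-1}(u)\,\mathrm{d}u$ are small, approximates the nondecreasing quantile uniformly by a step function $s$ on the compact $\left[t_{0},t_{1}\right]$, and takes $s(U)$, with $U$ uniform and comonotone with $Y$, as the simple approximant. You instead truncate first, $Y_{n}:=\sign(Y)\cdot\min\{\left|Y\right|,n\}$, prove $\Vert Y-Y_{n}\Vert_{\sigma}\to0$ by dominated convergence applied to $\int_{0}^{1}\sigma(u)\bigl(F_{\left|Y\right|}^{-1}(u)-n\bigr)^{+}\,\mathrm{d}u$, and then pass from $Y_{n}$ to simple functions via $\Vert\cdot\Vert_{\sigma}\le\Vert\cdot\Vert_{\infty}$ (valid since $\int_{0}^{1}\sigma=1$) and the triangle inequality. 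The two points you flag are indeed the ones that need checking, and both hold: $x\mapsto(x-n)^{+}$ is nondecreasing and left-continuous, so the quantile of $(\left|Y\right|-n)^{+}$ is $(F_{\left|Y\right|}^{-1}-n)^{+}$, and $\sigma F_{\left|Y\right|}^{-1}$ is an integrable majorant precisely because $Y\in L_{\sigma}$. A small advantage of your decomposition is that the truncation error $\left|Y-Y_{n}\right|=(\left|Y\right|-n)^{+}$ is a nondecreasing function of $\left|Y\right|$, so its quantile is identified exactly and no rearrangement argument is needed; the paper's final estimate $\Vert Y-s(U)\Vert_{\sigma}<\varepsilon$ implicitly has to relate the quantile of $\left|Y-s(U)\right|$ (a rearrangement of $\left|F_{Y}^{-1}-s\right|$) back to $\int_{0}^{1}\sigma\left|F_{Y}^{-1}-s\right|$, a step left to the reader. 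What the paper's construction buys in exchange is an explicit approximant that is comonotone with $Y$. Note also that your first step alone already establishes the ``in particular'' clause that $L^{\infty}$ is dense in $L_{\sigma}$.
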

\begin{proof}
Given $Y\in L_{\sigma}$ and $\varepsilon>0$, find $t_{0}\in\left(0,1\right)$
such that $\int_{0}^{t_{0}}F_{Y}^{-1}(u)\sigma(u)\mathrm{d}u<\frac{\varepsilon}{3}$
and set $s\left(t\right):=F_{Y}^{-1}\left(t_{0}\right)$ whenever
$t\le t_{0}$. Moreover, find $t_{1}\in\left(0,1\right)$ such that
$\int_{t_{1}}^{1}F_{Y}^{-1}(u)\sigma(u)\mathrm{d}u<\frac{\varepsilon}{3}$
and set $s\left(t\right):=F_{Y}^{-1}\left(t_{1}\right)$ whenever
$t\ge t_{1}$. In between, as $F_{Y}^{-1}(t)$ is nondecreasing on
the compact $\left[t_{0},t_{1}\right]$, there is an increasing step
function $s\left(t\right)$ such that $\left|s(t)-F_{Y}^{-1}(t)\right|\sigma(t)<\frac{\varepsilon}{3}$.
Let $U$ be uniformly distributed and co-monotone with $Y$. Then
it holds that $\left\Vert Y-s\left(U\right)\right\Vert _{\sigma}<\varepsilon$
by construction of the step function $s$.
\end{proof}

\section{\label{sec:LDual}The Dual of the natural domain $L_{\sigma}$}

Risk measures are convex and lower semi-continuous (cf. \cite{Schachermayer2006})
functions, hence they have a dual representation by involving the
Fenchel\textendash{}Moreau Theorem (also Legendre transformation,
see below). This representation involves the dual space in a natural
way, and hence it is of interest to understand the dual of the Banach
space $\left(L_{\sigma},\,\left\Vert \cdot\right\Vert _{\sigma}\right)$.
We describe the norm of the dual and identify the dual with a subspace
of $L^{1}$. The respective results are proven in this section, moreover
essential properties of the dual are highlighted.
\begin{thm}[Fenchel\textendash{}Moreau]
Let $\mathscr{Y}$ be a Banach space and $f:\,\mathscr{Y}\to\mathbb{R}\cup\left\{ \infty\right\} $
be convex and lower semi-continuous with $f\left(Y_{0}\right)<\infty$
for an $Y_{0}\in\mathscr{Y}$. Then 
\[
f^{**}=f,
\]
where 
\[
f^{*}\left(Z^{*}\right):=\sup_{Y\in\mathscr{Y}}Z^{*}\left(Y\right)-f\left(Y\right)\quad\text{and }\quad f^{**}\left(Y\right):=\sup_{Z^{*}\in\mathscr{Y}^{*}}Z^{*}\left(Y\right)-f^{*}\left(Z^{*}\right).
\]
\end{thm}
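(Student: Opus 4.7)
The plan is to prove the Fenchel--Moreau identity $f^{\ast\ast}=f$ in the two standard directions, with the substantive content concentrated in the geometric Hahn--Banach separation argument applied to the epigraph.

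First I would dispose of the inequality $f^{\ast\ast}\le f$, which is purely formal. For every $Y\in\mathscr{Y}$ and every $Z^{\ast}\in\mathscr{Y}^{\ast}$ the definition of $f^{\ast}$ yields $f^{\ast}(Z^{\ast})\ge Z^{\ast}(Y)-f(Y)$, hence $Z^{\ast}(Y)-f^{\ast}(Z^{\ast})\le f(Y)$, and taking the supremum over $Z^{\ast}\in\mathscr{Y}^{\ast}$ gives $f^{\ast\ast}(Y)\le f(Y)$. Along the way one notes that $f^{\ast}$, being a supremum of affine continuous functions on $\mathscr{Y}^{\ast}$, is itself convex and weak-$\ast$ lower semi-continuous, and that the hypothesis $f(Y_0)<\infty$ guarantees $f^{\ast}\not\equiv-\infty$; dually the inequality just established prevents $f^{\ast}\equiv+\infty$, so $f^{\ast}$ is proper.

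For the reverse inequality $f^{\ast\ast}\ge f$ I would argue by contradiction. Assume there exists $Y_0\in\mathscr{Y}$ and $\gamma\in\mathbb{R}$ with $f^{\ast\ast}(Y_0)<\gamma<f(Y_0)$. Because $f$ is convex and lsc, the epigraph
\[
\operatorname{epi}(f)=\bigl\{(Y,t)\in\mathscr{Y}\times\mathbb{R}:\,f(Y)\le t\bigr\}
\]
is a closed convex subset of the Banach space $\mathscr{Y}\times\mathbb{R}$ which does not contain the point $(Y_0,\gamma)$. The geometric Hahn--Banach theorem then produces a continuous linear functional $(Z^{\ast},\beta)\in\mathscr{Y}^{\ast}\times\mathbb{R}$ and a number $c\in\mathbb{R}$ such that $Z^{\ast}(Y_0)+\beta\gamma<c\le Z^{\ast}(Y)+\beta t$ for every $(Y,t)\in\operatorname{epi}(f)$. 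Stability of $\operatorname{epi}(f)$ under upward shifts forces $\beta\ge0$.

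The remaining work is to read off a contradiction from the separating functional, and this is the step where the main obstacle lies, namely the possibility that $\beta=0$ (a vertical separating hyperplane). In the easy sub-case $\beta>0$ I rescale so that $\beta=1$, set $\tilde Z^{\ast}:=-Z^{\ast}$, specialise to $t=f(Y)$ to obtain $f^{\ast}(\tilde Z^{\ast})=\sup_{Y}\bigl[\tilde Z^{\ast}(Y)-f(Y)\bigr]\le-c$, and conclude $f^{\ast\ast}(Y_0)\ge\tilde Z^{\ast}(Y_0)-f^{\ast}(\tilde Z^{\ast})\ge-Z^{\ast}(Y_0)+c>\gamma$, contradicting the choice of $\gamma$. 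The degenerate sub-case $\beta=0$ can only occur when $Y_0\notin\operatorname{dom}(f)$, i.e.\ $f(Y_0)=+\infty$; here I would invoke the properness of $f^{\ast}$ to pick some $\tilde Z^{\ast}\in\mathscr{Y}^{\ast}$ with $f^{\ast}(\tilde Z^{\ast})<\infty$ and consider the perturbed family $\tilde Z^{\ast}-\lambda Z^{\ast}$ for $\lambda>0$. Using the separation inequality $Z^{\ast}(Y)\ge c>Z^{\ast}(Y_0)$ on $\operatorname{dom}(f)$ one estimates $f^{\ast}(\tilde Z^{\ast}-\lambda Z^{\ast})\le f^{\ast}(\tilde Z^{\ast})-\lambda c$, whence
\[
f^{\ast\ast}(Y_0)\ge(\tilde Z^{\ast}-\lambda Z^{\ast})(Y_0)-f^{\ast}(\tilde Z^{\ast}-\lambda Z^{\ast})\ge\tilde Z^{\ast}(Y_0)-f^{\ast}(\tilde Z^{\ast})+\lambda\bigl(c-Z^{\ast}(Y_0)\bigr).
\]
Letting $\lambda\to\infty$ and using $c-Z^{\ast}(Y_0)>0$ forces $f^{\ast\ast}(Y_0)=+\infty$, again contradicting $f^{\ast\ast}(Y_0)<\gamma$. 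Both sub-cases rule out the existence of $Y_0$, which completes the proof.
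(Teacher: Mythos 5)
The paper does not actually prove this theorem --- it simply cites Rockafellar's monograph --- so there is no ``paper proof'' to compare against; your proposal supplies the standard epigraph-separation argument (as in Br\'ezis or Ekeland--Temam), and it is essentially correct: the Young--Fenchel inequality gives $f^{**}\le f$, and strict separation of $(Y_0,\gamma)$ from the closed convex set $\operatorname{epi}(f)$, with the case split on whether the separating hyperplane is vertical, gives the converse. Both sub-cases are handled correctly, including the perturbation trick $\tilde Z^{*}-\lambda Z^{*}$ for the vertical case.

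One justification is imprecise and worth repairing. You claim that $f^{*}\not\equiv+\infty$ because of ``the inequality just established,'' i.e.\ $f^{**}\le f$; but that inequality does not rule out $f^{*}\equiv+\infty$ (which would give $f^{**}\equiv-\infty\le f$, perfectly consistent). Properness of $f^{*}$ --- which you genuinely need in the $\beta=0$ sub-case to produce $\tilde Z^{*}$ with $f^{*}(\tilde Z^{*})<\infty$ --- is equivalent to $f$ admitting a continuous affine minorant, and must be proved. The fix uses machinery you already have: separate a point $(Y_{1},f(Y_{1})-1)$ with $Y_{1}\in\operatorname{dom}(f)$ from $\operatorname{epi}(f)$; there the vertical case is impossible (it would force $Z^{*}(Y_{1})<c\le Z^{*}(Y_{1})$), so $\beta>0$ and the resulting affine function minorizes $f$, whence $f^{*}$ is finite somewhere. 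Inserting this observation before the $\beta=0$ discussion closes the gap. A purely cosmetic point: you reuse the symbol $Y_{0}$ both for the point of the hypothesis where $f$ is finite and for the hypothetical point where $f^{**}<f$; in the $\beta=0$ sub-case these cannot be the same point, so a different letter would avoid confusion.
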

\begin{proof}
cf. \cite{Rockafellar1974}.
\end{proof}
Note, that a risk measure $\rho_{\sigma}$ is not only lower semicontinuous,
by Proposition~\ref{prop:Continuity} it is continuous with respect
to the norm $\left\Vert \cdot\right\Vert _{\sigma}$ on the Banach
space $\mathscr{Y}=\left(L_{\sigma},\,\left\Vert \cdot\right\Vert _{\sigma}\right)$.
By the Fenchel\textendash{}Moreau theorem thus $\rho_{\sigma}^{**}=\rho_{\sigma}$.
To involve it on its natural domain $\mathscr{Y}=\left(L_{\sigma},\,\left\Vert \cdot\right\Vert _{\sigma}\right)$
its dual $\mathscr{Y}^{*}=\left(L_{\sigma},\,\left\Vert \cdot\right\Vert _{\sigma}\right)^{*}$
has to be available, and this is elaborated in the sequel.
\begin{defn}
For a spectral function $\sigma$ and a random variable $Z\in L^{1}$
define the binary relation 
\begin{equation}
Z\preccurlyeq\sigma\;\text{ iff }\quad\AVaR_{\alpha}\left(\left|Z\right|\right)\le\frac{1}{1-\alpha}\int_{\alpha}^{1}\sigma(u)\mathrm{d}u\text{ for all }0\le\alpha<1,\label{eq:15}
\end{equation}
the gauge function (Minkowski functional) 
\begin{align}
\left\Vert Z\right\Vert _{\sigma}^{*}: & =\inf\left\{ \eta\ge0:\,\AVaR_{\alpha}\left(\left|Z\right|\right)\le\frac{\eta}{1-\alpha}\int_{\alpha}^{1}\sigma(u)\mathrm{d}u\text{ for all }0\le\alpha<1\right\} \label{eq:16}\\
 & =\inf\left\{ \eta\ge0:\,\left|Z\right|\preccurlyeq\eta\cdot\sigma\right\} \nonumber 
\end{align}
and the set $L_{\sigma}^{*}:=\left\{ Z\in L^{1}:\,\left\Vert Z\right\Vert _{\sigma}^{*}<\infty\right\} $.
\end{defn}
It should be noted that the relation \eqref{eq:15}, which is a kind
of second order stochastic dominance relation (cf.~\cite{Denuit,DentchevaRusz2004}),
can be interpreted as a growth condition for $\left|Z\right|$, which
is a condition on $Z$'s tails: $Z\preccurlyeq\eta\cdot\sigma$ can
only hold true if $\left|Z\right|$ does not grow (in quantiles) faster
towards $\infty$ than $\eta\cdot\sigma$. 

Notice as well that 
\begin{equation}
\left\Vert Z\right\Vert _{\sigma}^{*}\le\eta\text{ if and only if }\AVaR_{\alpha}\left(\left|Z\right|\right)\le\frac{\eta}{1-\alpha}\int_{\alpha}^{1}\sigma(u)\mathrm{d}u\text{ for all }0\le\alpha<1.\label{eq:17}
\end{equation}

Moreover the functions $\alpha\mapsto\int_{\alpha}^{1}\sigma(u)\mathrm{d}u$
and $\alpha\mapsto\left(1-\alpha\right)\AVaR_{\alpha}\left(\left|Z\right|\right)$
are both continuous functions on $\left[0,\,1\right]$, so the maximum
of their difference is attained in $\left[0,\,1\right]$. Hence, the
infimum in \eqref{eq:16} will be attained as well at some $\eta\ge0$.
\begin{example}
For $U$ a uniformly distributed random variable it follows readily
from the definition and \eqref{eq:13} that 
\begin{equation}
\left\Vert \sigma\left(U\right)\right\Vert _{\sigma}^{*}=1.\label{eq:25}
\end{equation}

The norm of the indicator function has the explicit form 
\begin{equation}
\left\Vert \one_{A}\right\Vert _{\sigma}^{*}=\frac{1}{\frac{1}{P\left(A\right)}\int_{1-P\left(A\right)}^{1}\sigma\left(u\right)\mathrm{d}u},\label{eq:12-1}
\end{equation}
which derives from $\AVaR_{\alpha}\left(\one_{A}\right)=\min\left\{ 1,\,\frac{P\left(A\right)}{1-\alpha}\right\} $
and the particular choice $\alpha=1-P\left(A\right)$ in \eqref{eq:15}.
Immediate consequences of~\eqref{eq:12-1} are further the bounds
$P\left(A\right)\le\left\Vert \one_{A}\right\Vert _{\sigma}^{*}\le1$. \end{example}
\begin{rem}
Given Kusuoka's representation one may employ the measure $\mu$ directly
instead of the spectral density $\sigma$ by involving \eqref{eq:Sigma}.
It holds that 
\[
\frac{1}{1-\alpha}\int_{\alpha}^{1}\sigma_{\mu}(u)\mathrm{d}u=\int_{0}^{1}\min\left\{ \frac{1}{1-u},\,\frac{1}{1-\alpha}\right\} \mathrm{d}\mu\left(u\right),
\]
the condition $Z\preccurlyeq\sigma_{\mu}$ thus reads directly 
\[
Z\preccurlyeq\sigma_{\mu}\;\text{ iff }\quad\AVaR_{\alpha}\left(\left|Z\right|\right)\le\int_{0}^{1}\min\left\{ \frac{1}{1-u},\,\frac{1}{1-\alpha}\right\} \mathrm{d}\mu\left(u\right)\text{ for all }0\le\alpha<1.
\]
Notice as well that $\int_{0}^{1}\min\left\{ \frac{1}{1-u},\,\frac{1}{1-\alpha}\right\} \mathrm{d}\mu\left(u\right)$
represents an expectation of a (bounded) function  with respect to
the measure $\mu$.\end{rem}
\begin{lem}
The unit ball of the norm $\left\Vert \cdot\right\Vert _{\sigma}^{*}$
is 
\[
B_{\sigma}=\left\{ Z\in L^{1}:\,\AVaR_{\alpha}\left(\left|Z\right|\right)\le\frac{1}{1-\alpha}\int_{\alpha}^{1}\sigma(u)\mathrm{d}u\text{ for all }0\le\alpha<1\right\} ,
\]
which is an absolutely convex set.\end{lem}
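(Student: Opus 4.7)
The plan is to establish the two claims in sequence. First, the identification of the unit ball with $B_\sigma$ is essentially a rewrite of the defining infimum. By the remark preceding the lemma (the continuity argument on $[0,1]$), the infimum in \eqref{eq:16} is attained, so $\|Z\|_\sigma^* \le 1$ is equivalent to the assertion that $\eta = 1$ is admissible in \eqref{eq:16}, which by \eqref{eq:17} is precisely the inequality defining membership in $B_\sigma$. Thus $\{Z : \|Z\|_\sigma^* \le 1\} = B_\sigma$, and there is nothing deep to do here beyond invoking the already-recorded equivalence.

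Next, absolute convexity. I would split this into showing that $B_\sigma$ is balanced and convex. For balancedness, if $|\lambda| \le 1$ and $Z \in B_\sigma$, then $|\lambda Z| = |\lambda|\cdot|Z|$, so by positive homogeneity of $\AVaR_\alpha$ one has $\AVaR_\alpha(|\lambda Z|) = |\lambda|\,\AVaR_\alpha(|Z|) \le \AVaR_\alpha(|Z|) \le \frac{1}{1-\alpha}\int_\alpha^1\sigma(u)\mathrm{d}u$, hence $\lambda Z \in B_\sigma$.

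For convexity, take $Z_1, Z_2 \in B_\sigma$ and $\lambda \in [0,1]$. The triangle inequality for the modulus gives $|\lambda Z_1 + (1-\lambda)Z_2| \le \lambda|Z_1| + (1-\lambda)|Z_2|$ pointwise, so by monotonicity, positive homogeneity and convexity of $\AVaR_\alpha$ (which, for $\alpha<1$, is a genuine risk measure on $L^1$),
\begin{align*}
\AVaR_\alpha\bigl(|\lambda Z_1 + (1-\lambda)Z_2|\bigr) &\le \AVaR_\alpha\bigl(\lambda|Z_1| + (1-\lambda)|Z_2|\bigr)\\
 &\le \lambda\,\AVaR_\alpha(|Z_1|) + (1-\lambda)\AVaR_\alpha(|Z_2|)\\
 &\le \frac{1}{1-\alpha}\int_\alpha^1\sigma(u)\mathrm{d}u,
\end{align*}
so $\lambda Z_1 + (1-\lambda)Z_2 \in B_\sigma$. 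Combining balancedness and convexity, $B_\sigma$ is absolutely convex.

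I do not anticipate a genuine obstacle: the whole statement is a direct translation of \eqref{eq:17} together with the elementary observation that $\AVaR_\alpha$ satisfies \ref{enu:Monotonicity}, \ref{enu:Homogeneity} and \ref{enu:Convexity}. The only subtlety worth flagging is the attainment of the infimum in \eqref{eq:16}, which is what allows the unit ball to be written with a non-strict inequality for \emph{every} $\alpha$; this is handled by the continuity remark that the author has already provided.
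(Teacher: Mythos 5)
Your proposal is correct and follows essentially the same route as the paper: the unit-ball identification is read off from \eqref{eq:17} with $\eta=1$, and absolute convexity rests on the same monotonicity, positive homogeneity and convexity (i.e.\ weighted sub-additivity) of $\AVaR_{\alpha}$ that the paper uses, merely repackaged as ``balanced plus convex'' instead of the paper's single estimate for $\left|\lambda_{1}\right|+\left|\lambda_{2}\right|\le1$. The only difference is that you spell out the unit-ball identification explicitly, which the paper leaves implicit.
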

\begin{proof}
Just observe that 
\begin{eqnarray*}
\AVaR_{\alpha}\left(\left|\lambda_{1}Z_{1}+\lambda_{2}Z_{2}\right|\right) & \le & \AVaR_{\alpha}\left(\left|\lambda_{1}Z_{1}\right|+\left|\lambda_{2}Z_{2}\right|\right)\\
 & = & 2\cdot\AVaR_{\alpha}\left(\frac{1}{2}\left|\lambda_{1}Z_{1}\right|+\frac{1}{2}\left|\lambda_{2}Z_{2}\right|\right)\\
 & \le & \left|\lambda_{1}\right|\AVaR_{\alpha}\left(\left|Z_{1}\right|\right)+\left|\lambda_{2}\right|\AVaR_{\alpha}\left(\left|Z_{2}\right|\right)
\end{eqnarray*}
by monotonicity, convexity and positive homogeneity (sub-additivity).
For $Z_{1},\, Z_{2}\in B_{\sigma}$ and $\left|\lambda_{1}\right|+\left|\lambda_{2}\right|\le1$
it follows thus that $\lambda_{1}Z_{1}+\lambda_{2}Z_{2}\in B_{\sigma}$
and $B_{\sigma}$ is absolutely convex.
\end{proof}

\paragraph{Monotonicity. }

It follows from monotonicity of the Average Value-at-Risk that 
\begin{equation}
\left\Vert Y_{1}\right\Vert _{\sigma}^{*}\le\left\Vert Y_{2}\right\Vert _{\sigma}^{*},\text{ if }\left|Y_{1}\right|\le\left|Y_{2}\right|.\label{eq:Monotonicity}
\end{equation}

\paragraph{Comparison with $L^{1}$. }

For $Z\in L^{\sigma}$, $\left\Vert Z\right\Vert _{\sigma}^{*}\le\eta$
implies that $\mathbb{E}\left|Z\right|\le\eta$ (by the choice $\alpha=0$
in~\eqref{eq:17}), hence 
\begin{equation}
\left\Vert Z\right\Vert _{1}\le\left\Vert Z\right\Vert _{\sigma}^{*}\label{eq:18}
\end{equation}
and $L_{\sigma}^{*}\subset L^{1}$.

\paragraph{Comparison with $L^{\infty}$. }

Suppose that $\sigma$ is bounded and $Z\in L^{\infty}$. Then $\AVaR_{\alpha}\left(\left|Z\right|\right)\to\left\Vert Z\right\Vert _{\infty}$
and $\frac{1}{1-\alpha}\int_{\alpha}^{1}\sigma(u)\mathrm{d}u\to\left\Vert \sigma\right\Vert _{\infty}$,
as $\alpha\to1$, and consequently $\left\Vert Z\right\Vert _{\infty}\le\eta\cdot\left\Vert \sigma\right\Vert _{\infty}$
has to hold by \eqref{eq:17} for $\eta$ to be feasible. That is,
\begin{equation}
\left\Vert Z\right\Vert _{\infty}\le\left\Vert Z\right\Vert _{\sigma}^{*}\cdot\left\Vert \sigma\right\Vert _{\infty}.\label{eq:19}
\end{equation}

\paragraph{Upper bound. }

An upper bound for the norm $\left\Vert \cdot\right\Vert _{\sigma}^{*}$
is given by 
\[
\left\Vert Z\right\Vert _{\sigma}^{*}\le\sup_{0\le u<1}\frac{F_{\left|Z\right|}^{-1}(u)}{\sigma(u)},
\]
where the conventions $\frac{0}{0}=0$ and $\frac{1}{0}=\infty$ have
to be employed. Indeed, if $\frac{F_{\left|Z\right|}^{-1}(u)}{\sigma(u)}\le\eta$,
then integrating gives $\left(1-\alpha\right)\AVaR_{\alpha}\left(\left|Z\right|\right)=\int_{\alpha}^{1}F_{\left|Z\right|}^{-1}(u)\mathrm{d}u\le\eta\cdot\int_{\alpha}^{1}\sigma\left(u\right)\mathrm{d}u$,
which in turn means that $\left\Vert Z\right\Vert _{\sigma}^{*}\le\eta$.
Notice, however, that $Z\mapsto\sup_{0\le u<1}\frac{F_{\left|Z\right|}^{-1}(u)}{\sigma(u)}$
is not a norm, it does not satisfy the triangle inequality.

\paragraph{Simple functions. }

For $Z=\sum_{j=1}^{n}a_{j}\one_{A_{j}}$ a simple (step) function,
$\alpha\mapsto\left(1-\alpha\right)\AVaR_{\alpha}\left(\left|Z\right|\right)=\int_{0}^{1}F_{\left|Z\right|}^{-1}\left(u\right)\mathrm{d}u$
is piecewise linear. As $\alpha\mapsto\int_{\alpha}^{1}\sigma\left(u\right)\mathrm{d}u$
is concave (this is, because $\sigma$ is increasing), the defining
condition \eqref{eq:17} has to be verified on finite many points
only, such that simple functions are contained in $L_{\sigma}^{*}$. 
\begin{prop}
The pair $\left(L_{\sigma}^{*},\,\left\Vert \cdot\right\Vert _{\sigma}^{*}\right)$
is a Banach space.\end{prop}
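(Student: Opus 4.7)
The plan is to establish first that $\|\cdot\|_\sigma^*$ is a genuine norm, and then to derive completeness by combining completeness of $L^1$ with a Fatou-type lower semicontinuity of the functionals $Z\mapsto\AVaR_\alpha(|Z|)$.

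For the norm axioms, positive homogeneity is immediate from the definition \eqref{eq:16}. The triangle inequality is exactly the content of the preceding lemma on absolute convexity of the unit ball $B_\sigma$: if $\|Z_1\|_\sigma^*\le\eta_1$ and $\|Z_2\|_\sigma^*\le\eta_2$, then $Z_1/\eta_1,\,Z_2/\eta_2\in B_\sigma$, and absolute convexity gives $(Z_1+Z_2)/(\eta_1+\eta_2)\in B_\sigma$, i.e.\ $\|Z_1+Z_2\|_\sigma^*\le\eta_1+\eta_2$. Separation of points follows immediately from the comparison \eqref{eq:18}: $\|Z\|_1\le\|Z\|_\sigma^*$, so $\|Z\|_\sigma^*=0$ forces $Z=0$ almost surely.

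For completeness, let $(Z_k)_k$ be a Cauchy sequence for $\|\cdot\|_\sigma^*$. By \eqref{eq:18} it is also Cauchy in $L^1$, hence converges to some $Z\in L^1$, and by passing to a subsequence we may assume $|Z_k|\to|Z|$ almost surely, which implies $F_{|Z_k|}^{-1}(u)\to F_{|Z|}^{-1}(u)$ at every continuity point of $F_{|Z|}^{-1}$ (cf.\ the argument used to prove that $L_\sigma$ is Banach). Since $F_{|Z_k|}^{-1}\ge 0$, Fatou's lemma yields, for every fixed $\alpha\in[0,1)$,
\[
(1-\alpha)\AVaR_\alpha(|Z|)=\int_\alpha^1 F_{|Z|}^{-1}(u)\,\mathrm{d}u\le\liminf_{k\to\infty}\int_\alpha^1 F_{|Z_k|}^{-1}(u)\,\mathrm{d}u=\liminf_{k\to\infty}(1-\alpha)\AVaR_\alpha(|Z_k|).
\]
Dividing by $\int_\alpha^1\sigma(u)\,\mathrm{d}u>0$ and taking the supremum over $\alpha\in[0,1)$ gives the lower semicontinuity $\|Z\|_\sigma^*\le\liminf_k\|Z_k\|_\sigma^*$. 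Since Cauchy sequences are norm-bounded, this places $Z$ in $L_\sigma^*$.

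To conclude convergence in norm, apply the same lower semicontinuity to the differences: for fixed $k$, the sequence $(Z_l-Z_k)_{l}$ converges to $Z-Z_k$ in $L^1$ (a.s.\ along a subsequence), so the same Fatou argument gives $\|Z-Z_k\|_\sigma^*\le\liminf_{l\to\infty}\|Z_l-Z_k\|_\sigma^*$. Given $\varepsilon>0$, pick $N$ so that $\|Z_l-Z_k\|_\sigma^*<\varepsilon$ for $k,l\ge N$; then $\|Z-Z_k\|_\sigma^*\le\varepsilon$ for $k\ge N$, so $Z_k\to Z$ in $(L_\sigma^*,\|\cdot\|_\sigma^*)$. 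The main technical point is the Fatou step, but it is handled cleanly because the quantile functions are nonnegative and the denominator $\int_\alpha^1\sigma\,\mathrm{d}u$ is deterministic and positive on $[0,1)$; the only care needed is to first secure almost-sure convergence of $|Z_k|$ by subsequence selection before swapping the $\liminf$ with the integral.
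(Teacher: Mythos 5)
Your proof is correct and follows essentially the same route as the paper: reduce the norm axioms to properties of the Average Value-at-Risk, use the comparison $\left\Vert Z\right\Vert _{1}\le\left\Vert Z\right\Vert _{\sigma}^{*}$ to obtain an $L^{1}$-limit $Z$ of a Cauchy sequence, and apply Fatou's lemma to the quantile functions to conclude that $Z$ lies in $L_{\sigma}^{*}$. Your argument is in fact slightly more complete than the paper's: by applying the same Fatou lower semicontinuity to the differences $Z_{l}-Z_{k}$ you actually verify $\left\Vert Z-Z_{k}\right\Vert _{\sigma}^{*}\to0$, i.e.\ convergence in the $\left\Vert \cdot\right\Vert _{\sigma}^{*}$-norm, a step that completeness requires but that the paper's proof leaves implicit (it only shows that the $L^{1}$-limit belongs to $L_{\sigma}^{*}$).
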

\begin{proof}
Notice first that $\left\Vert Z\right\Vert _{\sigma}^{*}=0$ implies
that $\AVaR_{\alpha}\left(\left|Z\right|\right)=0$ for all $\alpha<1$,
so 
\[
0=\lim_{\alpha\nearrow1}\AVaR_{\alpha}\left(\left|Z\right|\right)=\esssup\left|Z\right|,
\]
that is $Z=0$ almost everywhere, such that $\left\Vert \cdot\right\Vert _{\sigma}^{*}$
separates points in $L_{\sigma}^{*}$. 

Positive homogeneity is immediate and inherited from the Average Value-at-Risk.

As for the triangle inequality let $\eta_{1}$ and $\eta_{2}$, resp.
satisfy \eqref{eq:16} for $Z_{1}$ and $Z_{2}$, resp.. Then, by
monotonicity and sub-additivity of the Average Value-at-Risk, 
\[
\AVaR_{\alpha}\left(\left|Z_{1}+Z_{2}\right|\right)\le\AVaR_{\alpha}\left(\left|Z_{1}\right|+\left|Z_{2}\right|\right)\le\AVaR_{\alpha}\left(\left|Z_{1}\right|\right)+\AVaR_{\alpha}\left(\left|Z_{2}\right|\right)
\]
such that 
\[
\AVaR_{\alpha}\left(\left|Z_{1}+Z_{2}\right|\right)\le\frac{\eta_{1}+\eta_{2}}{1-\alpha}\int_{\alpha}^{1}\sigma(u)\mathrm{d}u,
\]
that is finally $\left\Vert Z_{1}+Z_{2}\right\Vert _{\sigma}^{*}\le\left\Vert Z_{1}\right\Vert _{\sigma}^{*}+\left\Vert Z_{2}\right\Vert _{\sigma}^{*}$,
the triangle inequality.

Finally completeness remains to be shown. For this let $Z_{k}$ be
a Cauchy sequence. Hence there is a natural number $k^{*}$, such
that $\left\Vert Z_{k}\right\Vert _{\sigma}^{*}\le\left\Vert Z_{k^{*}}\right\Vert _{\sigma}^{*}+\left\Vert Z_{k}-Z_{k^{*}}\right\Vert _{\sigma}^{*}\le\left\Vert Z_{k^{*}}\right\Vert _{\sigma}^{*}+1$,
that is there is $\eta\ge0$ ($\eta$ satisfies $\eta\le\left\Vert Z_{k^{*}}\right\Vert _{\sigma}^{*}+1$)
such that 
\[
\AVaR_{\alpha}\left(\left|Z_{k}\right|\right)\le\frac{\eta}{1-\alpha}\int_{\alpha}^{1}\sigma(u)\mathrm{d}u
\]
for all $k>k^{*}$ and $\alpha\in\left(0,1\right)$. Next, by \eqref{eq:18}
$Z_{k}$ is a Cauchy sequence for $L^{1}$ as well, hence there is
a limit $Z\in L^{1}$, and $Z_{k}$ converges in distribution and
in quantiles. By Fatou's inequality, 
\begin{eqnarray*}
\AVaR_{\alpha}\left(\left|Z\right|\right) & = & \frac{1}{1-\alpha}\int_{\alpha}^{1}F_{\left|Z\right|}^{-1}(u)\mathrm{d}u=\frac{1}{1-\alpha}\int_{\alpha}^{1}\liminf_{k\to\infty}F_{\left|Z_{k}\right|}^{-1}(u)\mathrm{d}u\\
 & \le & \frac{1}{1-\alpha}\liminf_{k\to\infty}\int_{\alpha}^{1}F_{\left|Z_{k}\right|}^{-1}(u)\mathrm{d}u=\liminf_{k\to\infty}\AVaR_{\alpha}\left(\left|Z_{k}\right|\right)\\
 & \le & \frac{\eta}{1-\alpha}\cdot\int_{\alpha}^{1}\sigma(u)\mathrm{d}u.
\end{eqnarray*}
The limit $Z\in L^{1}$ thus satisfies the defining conditions to
qualify for $L_{\sigma}^{*}$ and $\left\Vert Z\right\Vert _{\sigma}^{*}\le\eta$.
It follows that $Z\in L_{\sigma}^{*}$ and $\left(L_{\sigma}^{*},\,\left\Vert \cdot\right\Vert _{\sigma}^{*}\right)$
thus is a Banach space.\end{proof}
\begin{thm}
The space $\left(L_{\sigma}^{*},\,\left\Vert \cdot\right\Vert _{\sigma}^{*}\right)$
is the dual of \textup{$\left(L_{\sigma},\,\left\Vert \cdot\right\Vert _{\sigma}\right)$.}\end{thm}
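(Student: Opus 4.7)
The plan is to construct a canonical isometric isomorphism
\[
T\colon L_{\sigma}^{*}\to\bigl(L_{\sigma},\|\cdot\|_{\sigma}\bigr)^{*},\qquad T(Z)(Y):=\mathbb{E}[YZ],
\]
and to establish it in three stages: well-definedness together with the operator-norm bound $\|T(Z)\|_{\mathrm{op}}\le\|Z\|_{\sigma}^{*}$; the matching lower bound making $T$ an isometric embedding; and surjectivity.

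For the upper bound I begin with the Hardy--Littlewood rearrangement inequality,
\[
|\mathbb{E}[YZ]|\le\mathbb{E}|YZ|\le\int_{0}^{1}F_{|Y|}^{-1}(u)F_{|Z|}^{-1}(u)\,\mathrm{d}u,
\]
and then reproduce the Riemann--Stieltjes integration by parts used in the comparability theorem: with $h(u):=(1-u)\AVaR_{u}(|Z|)=\int_{u}^{1}F_{|Z|}^{-1}(t)\,\mathrm{d}t$ the right-hand side equals $F_{|Y|}^{-1}(0)\,h(0)+\int_{0}^{1}h(u)\,\mathrm{d}F_{|Y|}^{-1}(u)$. The defining inequality~\eqref{eq:17} gives $h(u)\le\|Z\|_{\sigma}^{*}\int_{u}^{1}\sigma(t)\,\mathrm{d}t$, and undoing the integration by parts with $\sigma$ in place of $F_{|Z|}^{-1}$ returns $\|Z\|_{\sigma}^{*}\cdot\|Y\|_{\sigma}$. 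This simultaneously shows $YZ\in L^{1}$ whenever $Y\in L_{\sigma}$ and $Z\in L_{\sigma}^{*}$.

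For the lower bound I use the equivalent characterization
\[
\|Z\|_{\sigma}^{*}=\sup_{0\le\alpha<1}\frac{(1-\alpha)\AVaR_{\alpha}(|Z|)}{\int_{\alpha}^{1}\sigma(u)\,\mathrm{d}u},
\]
which is immediate from~\eqref{eq:16}. For fixed $\alpha$ and $\varepsilon>0$ the dual representation~\eqref{eq:12} of the Average Value-at-Risk furnishes $W$ with $\mathbb{E}W=1$, $0\le W\le\tfrac{1}{1-\alpha}$ and $\mathbb{E}[|Z|W]\ge\AVaR_{\alpha}(|Z|)-\varepsilon$. Setting $Y:=\sign(Z)\cdot W$ I obtain $\mathbb{E}[YZ]\ge\AVaR_{\alpha}(|Z|)-\varepsilon$, while the structure of $W$ (values essentially in $\{0,\tfrac{1}{1-\alpha}\}$, coupled co-monotonically with $|Z|$) forces $F_{W}^{-1}(u)=\tfrac{1}{1-\alpha}\one_{u>\alpha}$, so that $\|Y\|_{\sigma}=\|W\|_{\sigma}=\tfrac{1}{1-\alpha}\int_{\alpha}^{1}\sigma$. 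Dividing, letting $\varepsilon\downarrow 0$ and taking the supremum in $\alpha$ yields $\|T(Z)\|_{\mathrm{op}}\ge\|Z\|_{\sigma}^{*}$.

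The main obstacle is surjectivity. Given $\phi\in(L_{\sigma})^{*}$, its restriction to $L^{\infty}$ is continuous because $\|Y\|_{\sigma}\le\|Y\|_{\infty}$, but $(L^{\infty})^{*}$ is strictly larger than $L^{1}$ (containing purely finitely additive parts), so I must show $\phi|_{L^{\infty}}$ has an $L^{1}$ representative. I verify order continuity: if $|Y_{n}|\le M$ and $Y_{n}\to 0$ almost surely, then $F_{|Y_{n}|}^{-1}(u)\to 0$ at every continuity point $u\in(0,1)$ and $F_{|Y_{n}|}^{-1}(u)\le M$, so dominated convergence with dominating function $M\sigma\in L^{1}([0,1])$ gives $\|Y_{n}\|_{\sigma}\to 0$ and therefore $\phi(Y_{n})\to 0$. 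This is precisely the criterion for $\phi|_{L^{\infty}}$ to coincide with $Y\mapsto\mathbb{E}[YZ]$ for some $Z\in L^{1}$. Testing $\phi$ against the $L^{\infty}$-valued random variables built in the lower-bound step then gives $\|Z\|_{\sigma}^{*}\le\|\phi\|_{\mathrm{op}}<\infty$, so $Z\in L_{\sigma}^{*}$. Since $L^{\infty}$ is dense in $L_{\sigma}$ by the preceding proposition, and both $\phi$ and $T(Z)$ are continuous on $L_{\sigma}$ and agree on $L^{\infty}$, they coincide on all of $L_{\sigma}$.
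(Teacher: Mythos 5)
Your argument is correct, and it buys a bit more than the paper's proof while following the same skeleton. The continuity bound $\left|\mathbb{E}\,YZ\right|\le\left\Vert Y\right\Vert _{\sigma}\left\Vert Z\right\Vert _{\sigma}^{*}$ is exactly the paper's computation (Hardy--Littlewood followed by Riemann--Stieltjes integration by parts); when you write it out, keep the boundary term $F_{\left|Y\right|}^{-1}\left(0\right)\left(\left\Vert Z\right\Vert _{1}-\left\Vert Z\right\Vert _{\sigma}^{*}\right)\le0$ explicit, since that is where \eqref{eq:18} is needed. Where you genuinely diverge is the representation step: the paper constructs the density by hand, defining $\mu\left(A\right):=\zeta\left(\one_{A}\right)$, checking countable additivity and absolute continuity, and applying Radon--Nikod\'ym, whereas you restrict $\zeta$ to $L^{\infty}$, verify order continuity by dominated convergence against $M\sigma\in L^{1}\left[0,1\right]$, and invoke the characterization of the countably additive part of $\left(L^{\infty}\right)^{*}$; the two are equivalent in substance, but your route makes the final extension from $L^{\infty}$ to $L_{\sigma}$ by density explicit, a step the paper leaves implicit, and your separate lower-bound stage makes the isometry $\left\Vert T\left(Z\right)\right\Vert =\left\Vert Z\right\Vert _{\sigma}^{*}$ explicit rather than deferring it to the subsequent Hahn--Banach paragraph. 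One imprecision to repair: an arbitrary $\varepsilon$-maximizer $W$ in \eqref{eq:12} need \emph{not} have quantile function $\frac{1}{1-\alpha}\one_{\left(\alpha,1\right]}$ (if $\left|Z\right|$ has an atom at its $\alpha$-quantile the optimal $W$ takes intermediate values there), so do not derive the structure of $W$ from near-optimality; instead \emph{choose} $W:=\frac{1}{1-\alpha}\one_{\left\{ U>\alpha\right\} }$ with $U$ uniform and comonotone with $\left|Z\right|$, which attains the supremum exactly on an atomless space and has the claimed quantile function by construction. The same explicit choice is the one you need when testing $\phi$ against these bounded variables to conclude $\left\Vert Z\right\Vert _{\sigma}^{*}\le\left\Vert \phi\right\Vert $.
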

\begin{proof}
Let $Y\in L_{\sigma}$ and $Z\in L_{\sigma}^{*}$ with $\left\Vert Z\right\Vert _{\sigma}^{*}=:\eta$
be chosen. Then note that 
\[
\left|\mathbb{E}\, YZ\right|\le\mathbb{E}\left|Y\right|\cdot\left|Z\right|\le\int_{0}^{1}F_{\left|Y\right|}^{-1}\left(u\right)F_{\left|Z\right|}^{-1}\left(u\right)\mathrm{d}u
\]
 by the Hardy\textendash{}Littlewood\textendash{}Pólya inequality.
To abbreviate the notation we introduce the functions $S\left(u\right):=\int_{u}^{1}\sigma(p)\mathrm{d}p$
and $G\left(u\right):=\int_{u}^{1}F_{\left|Z\right|}^{-1}\left(p\right)\mathrm{d}p$
(the functions are well defined, because $\sigma\in L^{1}$ and $Z\in L^{1}$).
Then, by Riemann\textendash{}Stieltjes integration by parts, 
\begin{eqnarray*}
\int_{0}^{1}F_{\left|Y\right|}^{-1}\left(u\right)F_{\left|Z\right|}^{-1}\left(u\right)\mathrm{d}u & = & -\int_{0}^{1}F_{\left|Y\right|}^{-1}\left(u\right)\mathrm{d}G\left(u\right)\\
 & = & -\left.F_{\left|Y\right|}^{-1}\left(u\right)G\left(u\right)\right|_{u=0}^{1}+\int_{0}^{1}G\left(u\right)\mathrm{d}F_{\left|Y\right|}^{-1}\left(u\right)\\
 & = & F_{\left|Y\right|}^{-1}\left(0\right)\cdot\mathbb{E}\left|Z\right|+\int_{0}^{1}G\left(u\right)\mathrm{d}F_{\left|Y\right|}^{-1}\left(u\right).
\end{eqnarray*}
Now note that $F_{\left|Y\right|}^{-1}\left(\cdot\right)$ is an increasing
function, and $G\left(u\right)=\int_{u}^{1}F_{\left|Z\right|}^{-1}\left(p\right)\mathrm{d}p\le\eta\cdot\int_{u}^{1}\sigma\left(p\right)\mathrm{d}p=\eta\cdot S\left(u\right)$
because $\left\Vert Z\right\Vert _{\sigma}^{*}\le\eta$. Thus, and
employing again Riemann\textendash{}Stieltjes integration by parts,
\begin{eqnarray*}
\left|\mathbb{E}YZ\right| & \le & F_{\left|Y\right|}^{-1}\left(0\right)\cdot\left\Vert Z\right\Vert _{1}+\eta\cdot\int_{0}^{1}S\left(u\right)\mathrm{d}F_{\left|Y\right|}^{-1}\left(u\right)\\
 & = & F_{\left|Y\right|}^{-1}\left(0\right)\cdot\left\Vert Z\right\Vert _{1}+\eta\cdot\left.S\left(u\right)F_{\left|Y\right|}^{-1}\left(u\right)\right|_{u=0}^{1}-\eta\cdot\int_{0}^{1}F_{\left|Y\right|}^{-1}\left(u\right)\mathrm{d}S\left(u\right)\\
 & = & F_{\left|Y\right|}^{-1}\left(0\right)\cdot\left\Vert Z\right\Vert _{1}-\eta\cdot F_{\left|Y\right|}^{-1}\left(0\right)+\eta\cdot\int_{0}^{1}F_{\left|Y\right|}^{-1}\left(u\right)\sigma\left(u\right)\mathrm{d}u\\
 & = & F_{\left|Y\right|}^{-1}\left(0\right)\cdot\left(\left\Vert Z\right\Vert _{1}-\eta\right)+\eta\cdot\int_{0}^{1}F_{\left|Y\right|}^{-1}\left(u\right)\sigma\left(u\right)\mathrm{d}u\\
 & = & F_{\left|Y\right|}^{-1}\left(0\right)\cdot\left(\left\Vert Z\right\Vert _{1}-\left\Vert Z\right\Vert _{\sigma}^{*}\right)+\left\Vert Z\right\Vert _{\sigma}^{*}\cdot\int_{0}^{1}F_{\left|Y\right|}^{-1}\left(u\right)\sigma\left(u\right)\mathrm{d}u.
\end{eqnarray*}
Finally observe that $F_{\left|Y\right|}^{-1}\left(0\right)=\essinf\left|Y\right|\ge0$
and $\left\Vert Z\right\Vert _{1}-\left\Vert Z\right\Vert _{\sigma}^{*}\le0$
by \eqref{eq:18}, hence 
\begin{eqnarray*}
\left|\mathbb{E}\, YZ\right| & \le & \left\Vert Z\right\Vert _{\sigma}^{*}\cdot\int_{0}^{1}F_{\left|Y\right|}^{-1}\left(u\right)\sigma\left(u\right)\mathrm{d}u=\rho_{\sigma}\left(\left|Y\right|\right)\cdot\left\Vert Z\right\Vert _{\sigma}^{*}=\left\Vert Y\right\Vert _{\sigma}\cdot\left\Vert Z\right\Vert _{\sigma}^{*}.
\end{eqnarray*}
This proves that for every $Z\in L_{\sigma}^{*}$ the linear mapping
$Y\mapsto\mathbb{E}\, YZ$ is continuous with respect to the norm
$\left\Vert \cdot\right\Vert _{\sigma}$. 

\medskip{}

It remains to be shown that every linear, continuous mapping $\zeta$
in the dual of $L_{\sigma}$ ($\zeta\in\left(L_{\sigma},\,\left\Vert \cdot\right\Vert _{\sigma}\right)^{*}$)
takes the form $\zeta\left(Y\right)=\mathbb{E}YZ$ for some $Z\in L_{\sigma}^{*}$.
For this consider the (signed) measure $\mu\left(A\right):=\zeta\left(\one_{A}\right)$.
If $A=\bigcup_{i=1}^{\infty}A_{i}$ is a disjoint union of countably
measurable sets, then $\one_{A}=\sum_{i=1}^{\infty}\one_{A_{i}}$.
Clearly, 
\[
\left\Vert \one_{A}-\sum_{i=1}^{n}\one_{A_{i}}\right\Vert _{\sigma}=\int_{1-\sum_{i=n+1}^{\infty}P\left(A_{i}\right)}^{1}\sigma(u)\mathrm{d}u\xrightarrow[n\to\infty]{}0,
\]
as $P$ is sigma-finite and $\sigma\in L^{1}$. It follows by continuity
of $\zeta$ with respect to $\left\Vert \cdot\right\Vert _{\sigma}$
that 
\[
\mu\left(A\right)=\zeta\left(\one_{A}\right)=\zeta\left(\sum_{i=1}^{\infty}\one_{A_{i}}\right)=\sum_{i=1}^{\infty}\zeta\left(\one_{A_{i}}\right)=\sum_{i=1}^{\infty}\mu\left(A_{i}\right),
\]
hence $\mu$ is a sigma-finite measure. If $P\left(A\right)=0$, then
\[
\left|\mu\left(A\right)\right|=\left|\zeta\left(\one_{A}\right)\right|\le\left\Vert \zeta\right\Vert \cdot\left\Vert \one_{A}\right\Vert _{\sigma}=\left\Vert \zeta\right\Vert \cdot\int_{0}^{1}\sigma\left(u\right)F_{\one_{A}}^{-1}\left(u\right)\mathrm{d}u=0,
\]
because $F_{\one_{A}}^{-1}\left(u\right)=0$ for every $u<1$. It
follows that $\mu\left(A\right)=0$, such that $\mu$ is moreover
absolutely continuous with respect to $P$. 

Let $Z\in L^{0}$ be the Radon\textendash{}Nikodým derivative, $\mathrm{d}\mu=Z\mathrm{d}P$.
Then $\zeta\left(\one_{A}\right)=\mu\left(A\right)=\int_{A}Z\mathrm{d}P=\int Z\one_{A}\mathrm{d}P=\mathbb{E}\, Z\one_{A}$
and hence $\zeta\left(\phi\right)=\mathbb{E}\, Z\phi$ for all simple
functions $\phi$ by linearity and $\left|\mathbb{E}\, Z\phi\right|=\left|\zeta\left(\phi\right)\right|\le\left\Vert \zeta\right\Vert \cdot\left\Vert \phi\right\Vert _{\sigma}$
by continuity of $\zeta$. 

Choose the function $\phi:=\sign\, Z$ (a simple function) to see
that $\mathbb{E}\left|Z\right|\le\left\Vert \zeta\right\Vert $, that
is $Z\in L^{1}$. 

Note as well that $\mathbb{E}\left|Z\right|\phi=\mathbb{E}\, Z\cdot\sign\left(Z\right)\phi\le\left\Vert \zeta\right\Vert \cdot\left\Vert \sign\left(Z\right)\phi\right\Vert _{\sigma}\le\left\Vert \zeta\right\Vert \cdot\left\Vert \phi\right\Vert _{\sigma}$,
because $\rho_{\sigma}$ is monotone and $\left|\sign\left(Z\right)\cdot\phi\right|\le\left|\phi\right|$.
For any measurable set $A$ (with complement denoted $A^{c}$) thus
\[
\mathbb{E}\left|Z\right|\one_{A^{c}}\le\left\Vert \zeta\right\Vert \cdot\left\Vert \one_{A^{c}}\right\Vert _{\sigma}=\left\Vert \zeta\right\Vert \cdot\rho_{\sigma}\left(\one_{A^{c}}\right)=\left\Vert \zeta\right\Vert \cdot\int_{P\left(A\right)}^{1}\sigma(u)\mathrm{d}u,
\]
and hence $\mathbb{E}\left|Z\right|\frac{\one_{A^{c}}}{P\left(A^{c}\right)}\le\left\Vert \zeta\right\Vert \cdot\frac{1}{1-P\left(A\right)}\int_{P\left(A\right)}^{1}\sigma(u)\mathrm{d}u$.
Taking the supremum over all sets $A$ with $P\left(A\right)\le\alpha$
gives
\begin{eqnarray*}
\AVaR_{\alpha}\left(\left|Z\right|\right) & = & \sup_{P\left(A^{c}\right)\ge1-\alpha}\mathbb{E}\left|Z\right|\frac{\one_{A^{c}}}{P\left(A^{c}\right)}\le\left\Vert \zeta\right\Vert \cdot\sup_{P\left(A\right)\le\alpha}\frac{1}{1-P\left(A\right)}\int_{P\left(A\right)}^{1}\sigma(u)\mathrm{d}u\\
 & = & \frac{\left\Vert \zeta\right\Vert }{1-\alpha}\int_{\alpha}^{1}\sigma(u)\mathrm{d}u
\end{eqnarray*}
by \eqref{eq:12} and because $\sigma$ is increasing. It follows
that $\left\Vert Z\right\Vert _{\sigma}^{*}\le\left\Vert \zeta\right\Vert $
and thus $Z\in L_{\sigma}^{*}$. This completes the proof.
\end{proof}

\paragraph{The Hahn-Banach functional. }

Let $Y\in L_{\sigma}$ be fixed, and let $U$ be coupled in a co-monotone
way with $\left|Y\right|$. Define $Z_{Y}:=\sigma\left(U\right)\cdot\sign\, Y$
and observe that $F_{\sigma\left(U\right)}^{-1}\left(\alpha\right)=\sigma\left(\alpha\right)$
by \eqref{eq:13}. Hence $\AVaR_{\alpha}\left(\sigma\left(U\right)\right)=\frac{1}{1-\alpha}\int_{\alpha}^{1}\sigma\left(u\right)\mathrm{d}u$,
and it follows that $\left\Vert Z_{Y}\right\Vert _{\sigma}^{*}=1$.
On the other side $\mathbb{E}\, Y\cdot Z_{Y}=\mathbb{E}\,\left|Y\right|\cdot\sigma\left(U\right)=\int_{0}^{1}F_{\left|Y\right|}^{-1}\left(u\right)\sigma\left(u\right)\mathrm{d}u=\left\Vert Y\right\Vert _{\sigma}$.
$Z_{Y}$ thus is a maximizer of the problem 
\[
\left\Vert Y\right\Vert _{\sigma}=\max\left\{ \mathbb{E}\, Y\cdot Z:\,\left\Vert Z\right\Vert _{\sigma}^{*}\le1\right\} .
\]

\begin{thm}
The Banach space $\left(L_{\sigma},\,\left\Vert \cdot\right\Vert _{\sigma}\right)$
is reflexive iff the spectrum function $\sigma$ is unbounded, $\sigma\notin L^{\infty}$.\end{thm}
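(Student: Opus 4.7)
The plan is to treat the two directions separately. The easy direction is the forward one: if $\sigma\in L^\infty$, then Theorem~\ref{thm:Inclusions}, \ref{enu:LInfty} identifies $(L_\sigma,\|\cdot\|_\sigma)$ with $L^1$ (the norms being equivalent). Since $L^1$ over an atomless probability space is not reflexive, neither is $L_\sigma$.

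For the substantial direction, suppose $\sigma\notin L^\infty$. The plan is to show that the canonical embedding $L_\sigma\hookrightarrow(L_\sigma^*)^*$ is surjective. Given $\Phi\in(L_\sigma^*)^*$, I would introduce the set function $\mu(A):=\Phi(\one_A)$. The pivotal observation, following from \eqref{eq:12-1} and the monotonicity of $\sigma$, is
\[
\|\one_A\|_\sigma^* \;=\; \frac{P(A)}{\int_{1-P(A)}^{1}\sigma(u)\,\mathrm{d}u} \;\le\; \frac{1}{\sigma\!\left(1-P(A)\right)},
\]
and the right-hand side tends to $0$ whenever $P(A)\to 0$---this is where the unboundedness of $\sigma$ enters, guaranteeing $\sigma(1-\varepsilon)\to\infty$. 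Combined with $|\mu(A)|\le\|\Phi\|\cdot\|\one_A\|_\sigma^*$ this yields countable additivity and $\mu\ll P$, so Radon--Nikod\'ym supplies a density $Y=\mathrm{d}\mu/\mathrm{d}P\in L^{1}$, and by linearity $\Phi(Z)=\mathbb{E}\,YZ$ for every simple function $Z$.

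It remains to verify $Y\in L_\sigma$ with $\|Y\|_\sigma\le\|\Phi\|$ and to extend the duality identity to all of $L_\sigma^*$. For the norm bound I would reuse the Hahn--Banach functional from the paragraph preceding the theorem: coupling a uniform $U$ co-monotonely with $|Y|$, the truncations $Z_n:=\sigma(U)\,\sign(Y)\,\one_{\{|Y|\le n\}}$ lie in $L^\infty\subset L_\sigma^*$ with $\|Z_n\|_\sigma^*\le 1$, and monotone convergence yields $\Phi(Z_n)=\mathbb{E}\,YZ_n\nearrow\|Y\|_\sigma$, forcing $\|Y\|_\sigma\le\|\Phi\|$.

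The technically hardest step---and the main obstacle---is the extension of $\Phi(Z)=\mathbb{E}\,YZ$ from simple to arbitrary $Z\in L_\sigma^*$, which amounts to proving that $L^\infty$ is $\|\cdot\|_\sigma^*$-dense in $L_\sigma^*$. This in turn requires a uniform-in-$\alpha$ comparison of $(1-\alpha)\AVaR_\alpha\!\left(|Z|\,\one_{\{|Z|>n\}}\right)$ with $\int_\alpha^1\sigma(u)\,\mathrm{d}u$, and the unboundedness of $\sigma$ is exploited a second time: it forces the denominator to grow on the critical range $\alpha\uparrow 1$ and thereby drives the ratio to zero, so truncations converge to $Z$ in $\|\cdot\|_\sigma^*$.
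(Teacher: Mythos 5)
Your forward direction and your construction of the candidate density $Y$ coincide with the paper's argument: the paper likewise reduces the bounded case to the non-reflexivity of $L^{1}$ via Theorem~\ref{thm:Inclusions}, and for unbounded $\sigma$ it sets $\nu\left(A\right):=\xi\left(\one_{A}\right)$, uses $\left\Vert \one_{A}\right\Vert _{\sigma}^{*}\le1/\sigma\left(1-P\left(A\right)\right)\to0$ for sigma-additivity and absolute continuity, and bounds $\left\Vert Y\right\Vert _{\sigma}$ by testing against co-monotone versions of $\sigma\left(U\right)\sign Y$. (Minor point: the paper tests with \emph{simple} nondecreasing $\sigma_{n}\nearrow\sigma$, whereas your $Z_{n}$ are bounded but not simple, so you must first extend $\Phi=\mathbb{E}\,Y\,\cdot$ from simple functions to $L^{\infty}$; this is harmless because $\left\Vert \cdot\right\Vert _{\sigma}^{*}\le\left\Vert \cdot\right\Vert _{\infty}$ makes simple functions dense in $L^{\infty}$ for the dual norm.)

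The genuine problem is the final step, which you rightly single out as the crux and which the paper's own proof passes over without comment: $L^{\infty}$ is \emph{not} dense in $\left(L_{\sigma}^{*},\,\left\Vert \cdot\right\Vert _{\sigma}^{*}\right)$ when $\sigma$ is unbounded, and truncations do not converge there. Take $Z=\sigma\left(U\right)$ and any $W\in L^{\infty}$. Since $\left|Z-W\right|\ge\sigma\left(U\right)-\left\Vert W\right\Vert _{\infty}$, monotonicity and translation equivariance of $\AVaR_{\alpha}$ give
\[
\left(1-\alpha\right)\AVaR_{\alpha}\left(\left|Z-W\right|\right)\ge\int_{\alpha}^{1}\sigma\left(u\right)\mathrm{d}u-\left(1-\alpha\right)\left\Vert W\right\Vert _{\infty},
\]
and because $\int_{\alpha}^{1}\sigma\left(u\right)\mathrm{d}u\ge\left(1-\alpha\right)\sigma\left(\alpha\right)$ with $\sigma\left(\alpha\right)\to\infty$, the quotient in \eqref{eq:17} tends to $1$ as $\alpha\to1$; hence $\left\Vert Z-W\right\Vert _{\sigma}^{*}\ge1$ for \emph{every} bounded $W$. (For your truncations one sees this directly: $F_{\left|Z-Z_{n}\right|}^{-1}\left(u\right)=\sigma\left(u\right)$ for $u$ close to $1$, so $\left\Vert Z-Z_{n}\right\Vert _{\sigma}^{*}=1$ for all $n$.) Consequently the identity $\Phi\left(Z\right)=\mathbb{E}\,YZ$, established on simple functions, does not propagate to all of $L_{\sigma}^{*}$, and the obstruction cannot be removed by a cleverer approximation: the closure of $L^{\infty}$ in $L_{\sigma}^{*}$ is a proper closed subspace, so Hahn--Banach yields a nonzero $\Phi\in\left(L_{\sigma}^{*}\right)^{*}$ vanishing on all simple functions, and such a $\Phi$ equals $\mathbb{E}\,Y\,\cdot$ for no $Y\in L_{\sigma}$ except $Y=0$. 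This shows the difficulty you flagged is a gap in the statement itself rather than in your strategy; consistently, for $\sigma\left(u\right)=\frac{1}{2\sqrt{1-u}}$ the space $L_{\sigma}$ is the separable Lorentz space $L^{2,1}$, whose dual $L^{2,\infty}$ is non-separable, which already excludes reflexivity.
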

\begin{proof}
If $\sigma$ is bounded, then $L_{\sigma}=L^{1}$, the norms being
equivalent by Theorem~\ref{thm:Inclusions}~\ref{enu:LInfty}. But
$L^{1}$ is not a reflexive space and thus $\left(L_{\sigma},\,\left\Vert \cdot\right\Vert _{\sigma}\right)$
is not reflexive.

Secondly, assume  that $\sigma$ is unbounded, and let $\xi$ be a
continuous functional in the bi-dual, $\xi\in\left(L_{\sigma}^{*},\,\left\Vert \cdot\right\Vert _{\sigma}^{*}\right)^{*}$,
with norm $\left\Vert \xi\right\Vert <\infty$. Define the measure
$\nu\left(A\right):=\xi\left(\one_{A}\right)$. Let $A_{i}$ be a
sequence of mutually disjoint, measurable sets and set $A:=\bigcup_{i=1}^{\infty}A_{i}$.
Note, that 
\[
\left\Vert \one_{A}-\sum_{i=1}^{N}\one_{A_{i}}\right\Vert _{\sigma}^{*}=\left\Vert \sum_{i=N+1}^{\infty}\one_{A_{i}}\right\Vert _{\sigma}^{*}=\frac{1}{\frac{1}{p_{N}}\int_{1-p_{N}}^{1}\sigma\left(u\right)\mathrm{d}u}\le\frac{1}{\sigma\left(1-p_{N}\right)}\xrightarrow[N\to\infty]{}0
\]
(where $p_{N}:=\sum_{i=N+1}^{\infty}P\left(A_{i}\right)$) by \eqref{eq:12-1},
and because $\sigma$ is unbounded. From continuity of $\xi$ it follows
thus that $\nu$ is sigma additive. Further, if $P\left(A\right)=0$,
then $\left\Vert \one_{A}\right\Vert _{\sigma}^{*}=0$ and 
\[
\left|\nu\left(A\right)\right|\le\left\Vert \xi\right\Vert \cdot\left\Vert \one_{A}\right\Vert _{\sigma}^{*}=0,
\]
$\nu$ thus is absolutely continuous with respect to $P$. 

Let $Y\in L^{0}$ be the Radon\textendash{}Nikodým density, $\mathrm{d}\nu=Y\mathrm{d}P$,
for which 
\[
\xi\left(\one_{A}\right)=\nu\left(A\right)=\int_{A}Y\mathrm{d}P=\int Y\one_{A}\mathrm{d}P=\mathbb{E}\, Y\one_{A},
\]
and from linearity thus $\xi\left(\phi\right)=\mathbb{E}\, Y\phi$
and $\left|\mathbb{E}\, Y\phi\right|=\left|\xi\left(\phi\right)\right|\le\left\Vert \xi\right\Vert \cdot\left\Vert \phi\right\Vert _{\sigma}^{*}$
for a simple function $\phi$.

Finally let $U$ be coupled in a co-monotone way with $\left|Y\right|$
and let $\sigma_{n}$ be simple, nondecreasing step functions with
$0\le\sigma_{n}\le\sigma_{n+1}\to\sigma$ pointwise, then 

\begin{eqnarray*}
\left\Vert Y\right\Vert _{\sigma} & = & \rho_{\sigma}\left(\left|Y\right|\right)=\mathbb{E}\left|Y\right|\sigma\left(U\right)=\mathbb{E}\left|Y\right|\cdot\lim_{n\to\infty}\sigma_{n}\left(U\right)\\
 & \le & \liminf_{n\to\infty}\:\mathbb{E}\left|Y\right|\sigma_{n}\left(U\right)=\liminf_{n\to\infty}\:\mathbb{E}\, Y\cdot\sign\left(Y\right)\sigma_{n}\left(U\right)\\
 & \le & \liminf_{n\to\infty}\left\Vert \xi\right\Vert \cdot\left\Vert \sign\left(Y\right)\sigma_{n}\left(U\right)\right\Vert _{\sigma}^{*}=\left\Vert \xi\right\Vert \cdot\liminf_{n\to\infty}\left\Vert \sigma_{n}\left(U\right)\right\Vert _{\sigma}^{*}\\
 & \le & \left\Vert \xi\right\Vert \cdot\left\Vert \sigma\left(U\right)\right\Vert _{\sigma}^{*}=\left\Vert \xi\right\Vert <\infty
\end{eqnarray*}
by Fatou's Lemma, monotonicity, \eqref{eq:Monotonicity}  and \eqref{eq:25}.
This proves that $Y\in L_{\sigma}$, and $L_{\sigma}$ thus is reflexive.
\end{proof}
\medskip{}

The following statement compares $L_{\sigma}^{*}$ spaces with spaces
$L^{q}$, and it generalizes the relations \eqref{eq:18} and \eqref{eq:19}
for general $L^{q}$ spaces. It is the dual statement to Theorem~\ref{thm:Inclusions}.
\begin{thm}[Comparison with $L^{q}$]
For $\sigma\in L^{q}$ ($1\le q\le\infty$) it holds that 
\[
\left\Vert Z\right\Vert _{q}\le\left\Vert Z\right\Vert _{\sigma}^{*}\cdot\left\Vert \sigma\right\Vert _{q}
\]
whenever $Z\in L_{\sigma}^{*}$, and thus $L_{\sigma}^{*}\subset L^{q}$.

Moreover, 
\[
\frac{\left\Vert Z\right\Vert _{\infty}}{\left\Vert \sigma\right\Vert _{\infty}}\le\left\Vert Z\right\Vert _{\sigma}^{*}\le\left\Vert Z\right\Vert _{\infty}
\]
such that the norms $\left\Vert \cdot\right\Vert _{\infty}$ and $\left\Vert \cdot\right\Vert _{\sigma}^{*}$
are equivalent whenever $\sigma\in L^{\infty}$, and in this case
$L_{\sigma}^{*}=L^{\infty}$.\end{thm}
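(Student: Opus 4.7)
The statement decomposes into two parts: (a) the inequality $\left\Vert Z\right\Vert _{q}\le\left\Vert Z\right\Vert _{\sigma}^{*}\cdot\left\Vert \sigma\right\Vert _{q}$ for every $1\le q\le\infty$, and (b) the equivalence of $\left\Vert \cdot\right\Vert _{\infty}$ and $\left\Vert \cdot\right\Vert _{\sigma}^{*}$ when $\sigma\in L^{\infty}$, from which $L_{\sigma}^{*}=L^{\infty}$ is immediate. The plan is to mimic the two-step Riemann--Stieltjes integration-by-parts already deployed in the proof of the duality theorem above.

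For (a), the cases $q=1$ and $q=\infty$ are \eqref{eq:18} and \eqref{eq:19} respectively; the content lies in $1<q<\infty$. Set $\eta:=\left\Vert Z\right\Vert _{\sigma}^{*}$ (attained, as remarked after \eqref{eq:16}) and abbreviate $G(\alpha):=\int_{\alpha}^{1}F_{\left|Z\right|}^{-1}(u)\mathrm{d}u$, $S(\alpha):=\int_{\alpha}^{1}\sigma(u)\mathrm{d}u$, so that \eqref{eq:17} reads $G\le\eta\cdot S$ on $[0,1)$. Starting from
\[
\int_{0}^{1}F_{\left|Z\right|}^{-1}(u)^{q}\,\mathrm{d}u=-\int_{0}^{1}F_{\left|Z\right|}^{-1}(u)^{q-1}\,\mathrm{d}G(u),
\]
I integrate by parts once; since $F_{\left|Z\right|}^{-1}$ is nondecreasing, $\mathrm{d}(F_{\left|Z\right|}^{-1})^{q-1}$ is a positive measure and $G\le\eta\cdot S$ may be inserted under the integral. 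A second integration by parts converts the $S$-integral back into an integral against $\sigma$, and the boundary terms from the two steps combine into the single contribution $F_{\left|Z\right|}^{-1}(0)^{q-1}(G(0)-\eta)$, which is $\le 0$ since $G(0)=\left\Vert Z\right\Vert _{1}\le\eta$ by \eqref{eq:18}. The residual estimate
\[
\int_{0}^{1}F_{\left|Z\right|}^{-1}(u)^{q}\,\mathrm{d}u\;\le\;\eta\int_{0}^{1}F_{\left|Z\right|}^{-1}(u)^{q-1}\sigma(u)\,\mathrm{d}u
\]
is then closed by H\"older's inequality with conjugate exponents $q$ and $q/(q-1)$, yielding $\left\Vert Z\right\Vert _{q}^{q}\le\eta\left\Vert \sigma\right\Vert _{q}\left\Vert Z\right\Vert _{q}^{q-1}$ and, after division, the claim. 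A preliminary truncation $\left|Z\right|\wedge N$ followed by $N\to\infty$ (together with monotone convergence of quantiles, and the observation that the same $\eta$ remains admissible for the truncation by monotonicity of $\AVaR$) legitimizes the division, since $\left\Vert Z\right\Vert _{q}$ is not a priori finite.

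For (b), the lower bound $\left\Vert Z\right\Vert _{\infty}/\left\Vert \sigma\right\Vert _{\infty}\le\left\Vert Z\right\Vert _{\sigma}^{*}$ is precisely \eqref{eq:19}. For the reverse inequality I verify via \eqref{eq:17} that $\eta:=\left\Vert Z\right\Vert _{\infty}$ is admissible: since $\AVaR_{\alpha}(\left|Z\right|)\le\left\Vert Z\right\Vert _{\infty}$, admissibility reduces to $(1-\alpha)\le\int_{\alpha}^{1}\sigma(u)\,\mathrm{d}u$ for every $\alpha\in[0,1)$. Introducing $\varphi(\alpha):=\int_{\alpha}^{1}\sigma(u)\,\mathrm{d}u-(1-\alpha)$, one has $\varphi^{\prime}(\alpha)=1-\sigma(\alpha)$ almost everywhere, which is nonincreasing, so $\varphi$ is concave; since $\varphi(0)=\varphi(1)=0$, concavity forces $\varphi\ge0$ throughout $[0,1]$, as required.

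The main obstacle is the bookkeeping in the integration-by-parts step: one must check that the ``extra'' term $-\eta F_{\left|Z\right|}^{-1}(0)^{q-1}$ produced by the second integration merges with the boundary term from the first so as to leave the clean quantity $F_{\left|Z\right|}^{-1}(0)^{q-1}(G(0)-\eta)$, whose nonpositivity is exactly supplied by \eqref{eq:18}. A conceptual shortcut bypassing this computation is available: by \eqref{eq:13}, $\eta\sigma$ is the quantile function of $\eta\sigma(U)$ for $U$ uniformly distributed, so $G\le\eta\cdot S$ is precisely $\AVaR_{\alpha}(\left|Z\right|)\le\AVaR_{\alpha}(\eta\sigma(U))$ for every $\alpha\in[0,1)$, i.e.\ $\left|Z\right|$ is dominated by $\eta\sigma(U)$ in the increasing convex order; the standard characterization of that order applied to $\phi(x)=x^{q}$ then gives $\mathbb{E}\left|Z\right|^{q}\le\eta^{q}\mathbb{E}\sigma(U)^{q}=\eta^{q}\left\Vert \sigma\right\Vert _{q}^{q}$ at once.
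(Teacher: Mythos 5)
Your proof is correct, but it follows a genuinely different route from the paper's. The paper disposes of both inequalities in two lines by pure duality: it writes $\left\Vert Z\right\Vert _{q}=\sup_{Y\neq0}\mathbb{E}YZ/\left\Vert Y\right\Vert _{p}$ and $\left\Vert Z\right\Vert _{\sigma}^{*}=\sup_{Y\neq0}\mathbb{E}YZ/\left\Vert Y\right\Vert _{\sigma}$, inserts the pairing bound $\left|\mathbb{E}YZ\right|\le\left\Vert Y\right\Vert _{\sigma}\left\Vert Z\right\Vert _{\sigma}^{*}$ established in the preceding duality theorem, and closes with the sandwich $\left\Vert Y\right\Vert _{1}\le\left\Vert Y\right\Vert _{\sigma}\le\left\Vert \sigma\right\Vert _{q}\left\Vert Y\right\Vert _{p}$ from \eqref{eq:sigmaLp}. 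You instead work entirely on the primal side: the two-step Riemann--Stieltjes computation (applied to $(F_{\left|Z\right|}^{-1})^{q-1}$ rather than to $F_{\left|Y\right|}^{-1}$), the boundary-term bookkeeping, the H\"older closure, and the truncation to legitimize the division are all carried out correctly, as is the concavity argument showing $\int_{\alpha}^{1}\sigma(u)\,\mathrm{d}u\ge1-\alpha$, which makes $\eta=\left\Vert Z\right\Vert _{\infty}$ directly admissible in \eqref{eq:17} without invoking the norming identity $\left\Vert Z\right\Vert _{\sigma}^{*}=\sup_{Y}\mathbb{E}YZ/\left\Vert Y\right\Vert _{\sigma}$. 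The trade-off: the paper's argument is shorter but leans on the full strength of the duality theorem (whose own proof contains exactly the integration-by-parts machinery you redo here), whereas yours is self-contained given only \eqref{eq:17}--\eqref{eq:19}. Your closing observation is the most valuable part: via \eqref{eq:13}, the condition $\left\Vert Z\right\Vert _{\sigma}^{*}\le\eta$ is precisely domination of $\left|Z\right|$ by $\eta\,\sigma(U)$ in the increasing convex order, so $\mathbb{E}\phi(\left|Z\right|)\le\mathbb{E}\phi(\eta\,\sigma(U))$ for \emph{every} increasing convex $\phi$, not just $\phi(x)=x^{q}$; this yields the theorem in one line and explains conceptually why $L_{\sigma}^{*}$ embeds into every $L^{q}$ containing $\sigma$.
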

\begin{proof}
Employing $L^{p}-L^{q}$ duality and $L_{\sigma}-L_{\sigma}^{*}$
duality it holds that 
\[
\left\Vert Z\right\Vert _{q}=\sup_{Y\neq0}\frac{\mathbb{E}YZ}{\left\Vert Y\right\Vert _{p}}\le\sup_{Y\neq0}\frac{\left\Vert Y\right\Vert _{\sigma}\left\Vert Z\right\Vert _{\sigma}^{*}}{\left\Vert Y\right\Vert _{p}}\le\sup_{Y\neq0}\frac{\left\Vert \sigma\right\Vert _{q}\left\Vert Y\right\Vert _{p}\left\Vert Z\right\Vert _{\sigma}^{*}}{\left\Vert Y\right\Vert _{p}}=\left\Vert \sigma\right\Vert _{q}\cdot\left\Vert Z\right\Vert _{\sigma}^{*}
\]
by \eqref{eq:sigmaLp}.

The inequality, which is missing, is given by 
\[
\left\Vert Z\right\Vert _{\sigma}^{*}=\sup_{Y\neq0}\frac{\mathbb{E}YZ}{\left\Vert Y\right\Vert _{\sigma}}\le\sup_{Y\neq0}\frac{\left\Vert Y\right\Vert _{1}\left\Vert Z\right\Vert _{\infty}}{\left\Vert Y\right\Vert _{\sigma}}\le\sup_{Y\neq0}\frac{\left\Vert Y\right\Vert _{\sigma}\left\Vert Z\right\Vert _{\infty}}{\left\Vert Y\right\Vert _{\sigma}}=\left\Vert Z\right\Vert _{\infty},
\]
again by \eqref{eq:sigmaLp}.
\end{proof}

\section{\label{sec:LGeneral}The general natural domain space $L_{\mathscr{S}}$}

Kusuoka's theorem (Theorem~\ref{thm:Kusuoka}) and \eqref{eq:Spectral}
suggest to consider risk measures of the form 
\[
\rho_{\mathscr{S}}\left(\cdot\right):=\sup_{\sigma\in\mathscr{S}}\rho_{\sigma}\left(\cdot\right).
\]
To investigate this general type of risk measure we define the according
norm and space first.
\begin{defn}
The \emph{natural domain} of $\rho_{\mathscr{S}}$, where $\mathscr{S}$
is a collection of spectral functions, is 
\[
L_{\mathscr{S}}:=\left\{ Y\in L^{1}:\,\left\Vert Y\right\Vert _{\mathscr{S}}<\infty\right\} ,
\]
where 
\[
\left\Vert \cdot\right\Vert _{\mathscr{S}}:=\rho_{\mathscr{S}}\left(\left|\cdot\right|\right)=\sup_{\sigma\in\mathscr{S}}\rho_{\sigma}\left(\left|\cdot\right|\right)=\sup_{\sigma\in\mathscr{S}}\left\Vert \cdot\right\Vert _{\sigma}.
\]

\end{defn}
Obviously, $L_{\mathscr{S}}\subset\bigcap_{\sigma\in\mathscr{S}}L_{\sigma}$.
In view of Theorem~\ref{thm:Inclusions}~\ref{enu:LInfty} it is
obvious as well that 
\[
L^{\infty}\subset L_{\mathscr{S}}\subset L^{1},
\]
even more, it holds that $\left\Vert Y\right\Vert _{\mathscr{S}}\le\left\Vert Y\right\Vert _{\infty}$
whenever $Y\in L^{\infty}$, and $\left\Vert Y\right\Vert _{1}\le\left\Vert Y\right\Vert _{\mathscr{S}}$,
whenever $Y\in L_{\mathscr{S}}$. Further, if $\sup_{\sigma\in\mathscr{S}}\left\Vert \sigma\right\Vert _{q}<\infty$
is finite as well, then 
\[
\left\Vert Y\right\Vert _{\mathscr{S}}\le\sup_{\sigma\in\mathscr{S}}\left\Vert \sigma\right\Vert _{q}\cdot\left\Vert Y\right\Vert _{p}
\]
 by Theorem~\ref{thm:Inclusions},~\ref{enu:Lp}.
\begin{thm}
The pair $\left(L_{\mathscr{S}},\,\left\Vert \cdot\right\Vert _{\mathscr{S}}\right)$
is a Banach space.\end{thm}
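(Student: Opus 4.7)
The plan is to mirror the proof that $\left(L_{\sigma},\left\Vert \cdot\right\Vert _{\sigma}\right)$ is a Banach space, with one extra twist to accommodate the supremum over $\mathscr{S}$. First I would verify the norm properties of $\left\Vert \cdot\right\Vert _{\mathscr{S}}$. Positive homogeneity and the triangle inequality transfer directly from each $\left\Vert \cdot\right\Vert _{\sigma}$ through the supremum, using only that $\sup$ is sub-additive and commutes with multiplication by nonnegative scalars. That $\left\Vert \cdot\right\Vert _{\mathscr{S}}$ separates points follows from the bound $\left\Vert Y\right\Vert _{1}\le\left\Vert Y\right\Vert _{\sigma}\le\left\Vert Y\right\Vert _{\mathscr{S}}$ provided by Theorem~\ref{thm:Inclusions}\,\ref{enu:Lp} applied to any fixed $\sigma\in\mathscr{S}$: if $\left\Vert Y\right\Vert _{\mathscr{S}}=0$, then $\left\Vert Y\right\Vert _{1}=0$ and hence $Y=0$ almost surely.

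For completeness, let $\left(Y_{k}\right)_{k}$ be Cauchy in $L_{\mathscr{S}}$. The inequality $\left\Vert \cdot\right\Vert _{1}\le\left\Vert \cdot\right\Vert _{\mathscr{S}}$ forces $\left(Y_{k}\right)_{k}$ to be Cauchy in $L^{1}$, so there exists $Y\in L^{1}$ with $\left\Vert Y_{k}-Y\right\Vert _{1}\to 0$. As in the proof for a single $\sigma$, this entails pointwise a.e.\ convergence of the inverse cdfs $F_{\left|Y_{k}-Y_{m}\right|}^{-1}(u)\to F_{\left|Y_{k}-Y\right|}^{-1}(u)$ as $m\to\infty$ (for each fixed $k$). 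I would then fix $k$ and $\sigma\in\mathscr{S}$ and apply Fatou's lemma to the nonnegative integrands $\sigma(u)F_{\left|Y_{k}-Y_{m}\right|}^{-1}(u)$ to obtain
\[
\left\Vert Y_{k}-Y\right\Vert _{\sigma}\le\liminf_{m\to\infty}\left\Vert Y_{k}-Y_{m}\right\Vert _{\sigma}\le\liminf_{m\to\infty}\left\Vert Y_{k}-Y_{m}\right\Vert _{\mathscr{S}}.
\]
The crucial observation is that the right-hand side no longer depends on $\sigma$, so taking the supremum over $\sigma\in\mathscr{S}$ on the left yields
\[
\left\Vert Y_{k}-Y\right\Vert _{\mathscr{S}}\le\liminf_{m\to\infty}\left\Vert Y_{k}-Y_{m}\right\Vert _{\mathscr{S}}.
\]
This inequality is exactly the general fact that $\sup\liminf\le\liminf\sup$, applied in the concrete setting. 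By the Cauchy property, the right-hand side can be made smaller than any prescribed $\varepsilon$ for all $k$ sufficiently large, which proves both that $\left\Vert Y_{k}-Y\right\Vert _{\mathscr{S}}\to 0$ and, via the triangle inequality $\left\Vert Y\right\Vert _{\mathscr{S}}\le\left\Vert Y_{k}\right\Vert _{\mathscr{S}}+\left\Vert Y_{k}-Y\right\Vert _{\mathscr{S}}$, that $Y\in L_{\mathscr{S}}$.

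The only step requiring real attention is the interplay between Fatou's lemma and the supremum over $\mathscr{S}$. A careless interchange would demand uniform control in $\sigma$ of the convergence $F_{\left|Y_{k}-Y_{m}\right|}^{-1}\to F_{\left|Y_{k}-Y\right|}^{-1}$, which is not available. The remedy is not to interchange $\sup_{\sigma}$ and $\liminf_{m}$ in the wrong direction: bounding for each fixed $\sigma$ first and then taking the supremum exploits the automatic inequality $\sup_{\sigma}\liminf_{m}\le\liminf_{m}\sup_{\sigma}$, which goes the correct way and obviates any uniformity hypothesis on $\mathscr{S}$.
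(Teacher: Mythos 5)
Your proof is correct and follows essentially the same route as the paper: reduce to $L^{1}$ (via $\left\Vert \cdot\right\Vert _{1}\le\left\Vert \cdot\right\Vert _{\mathscr{S}}$) to obtain a candidate limit, apply Fatou's lemma for each fixed $\sigma$, and pass the supremum over $\mathscr{S}$ through the limit via the max--min inequality $\sup\liminf\le\liminf\sup$. The only (welcome) difference is that by running the Fatou argument on the differences $Y_{k}-Y_{m}$ you obtain the norm convergence $\left\Vert Y_{k}-Y\right\Vert _{\mathscr{S}}\to0$ explicitly, whereas the paper's proof only verifies that the limit $Y$ belongs to $L_{\mathscr{S}}$.
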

\begin{proof}
First of all it is clear that $\left\Vert \cdot\right\Vert _{\mathscr{S}}$
is a norm on $L_{\mathscr{S}}$, as it separates points, is positively
homogeneous and satisfies the triangle inequality: these properties
are inherited from the spaces $\left(L_{\sigma},\,\left\Vert \cdot\right\Vert _{\sigma}\right)_{\sigma\in\mathscr{S}}$. 

It remains to be shown that $\left(L_{\mathscr{S}},\,\left\Vert \cdot\right\Vert _{\mathscr{S}}\right)$
is complete. So if $\left(Y_{k}\right)_{k}$ is a Cauchy sequence
in $L_{\mathscr{S}}$, then because of $\left\Vert \cdot\right\Vert _{\sigma}\le\left\Vert \cdot\right\Vert _{\mathscr{S}}$
it is a Cauchy sequence in any of the spaces $\left(L_{\sigma},\,\left\Vert \cdot\right\Vert _{\sigma}\right)$
and it has a limit $Y$ there. The limit is the same for all $L_{\sigma}$,
so $Y\in\bigcap_{\sigma\in\mathscr{S}}L_{\sigma}$. Following \eqref{eq:15}
it holds that 
\[
\left\Vert Y\right\Vert _{\mathscr{S}}=\sup_{\sigma\in\mathscr{S}}\left\Vert Y\right\Vert _{\sigma}\le\sup_{\sigma\in\mathscr{S}}\liminf_{k\to\infty}\left\Vert Y_{k}\right\Vert _{\sigma}\le\liminf_{k\to\infty}\sup_{\sigma\in\mathscr{S}}\left\Vert Y_{k}\right\Vert _{\sigma}=\liminf_{k\to\infty}\left\Vert Y_{k}\right\Vert _{\mathscr{S}}
\]
 by the max-min inequality. Now choose $k^{*}\in\mathbb{N}$ such
that $\left\Vert Y_{k}-Y_{k^{*}}\right\Vert _{\mathscr{S}}<1$ for
all $k>k^{*}$, which is possible because the sequence is Cauchy.
It follows that 
\[
\left\Vert Y\right\Vert _{\mathscr{S}}\le\liminf_{k\to\infty}\left\Vert Y_{k}\right\Vert _{\mathscr{S}}\le\left\Vert Y_{k^{*}}\right\Vert _{\mathscr{S}}+1<\infty,
\]
and hence $Y\in L_{\mathscr{S}}$, that is $L_{\mathscr{S}}$ is complete.\end{proof}
\begin{thm}
The risk measure $\rho_{\mathscr{S}}$ is finite valued on $L_{\mathscr{S}}$,
it is moreover continuous with respect to the norm $\left\Vert \cdot\right\Vert _{\mathscr{S}}$
with Lipschitz constant $1$.\end{thm}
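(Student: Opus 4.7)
The plan is to reduce the claim to the two general results already proved in Section~\ref{sec:Norm}, namely Proposition~\ref{prop:Finiteness} (Finiteness and the seminorm property) and Proposition~\ref{prop:Continuity} (Continuity). The point is that $\rho_{\mathscr{S}}$, being a pointwise supremum of the risk measures $\rho_{\sigma}$, is itself a risk measure in the axiomatic sense, so that once its associated seminorm is identified with $\left\Vert \cdot \right\Vert_{\mathscr{S}}$ the two propositions apply verbatim with $\rho := \rho_{\mathscr{S}}$ and $L := L_{\mathscr{S}}$.

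First I would verify that $\rho_{\mathscr{S}} = \sup_{\sigma \in \mathscr{S}} \rho_{\sigma}$ inherits axioms \ref{enu:Monotonicity}, \ref{enu:Homogeneity}, \ref{enu:Convexity} and \ref{enu:Equivariance}: monotonicity and positive homogeneity pass through suprema trivially, translation equivariance passes because each $\rho_{\sigma}$ adds the same constant $c$, and convexity is preserved since the supremum of a family of convex functions is convex. Thus $\rho_{\mathscr{S}}$ is an $\mathbb{R} \cup \{+\infty\}$-valued risk measure defined on the ambient space of $\mathbb{R}$-valued random variables.

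Next, by the very definition of $L_{\mathscr{S}}$, its associated seminorm $\rho_{\mathscr{S}}(|\cdot|)$ coincides with $\left\Vert \cdot \right\Vert_{\mathscr{S}}$ and is finite on $L_{\mathscr{S}}$. Proposition~\ref{prop:Finiteness} applied to $\rho_{\mathscr{S}}$ then yields the bound $|\rho_{\mathscr{S}}(Y)| \le \left\Vert Y \right\Vert_{\mathscr{S}}$ for all $Y \in L_{\mathscr{S}}$, which in particular shows that $\rho_{\mathscr{S}}$ is real-valued (not $+\infty$) on $L_{\mathscr{S}}$. Similarly Proposition~\ref{prop:Continuity} gives the Lipschitz estimate
\[
\left| \rho_{\mathscr{S}}(Y_{2}) - \rho_{\mathscr{S}}(Y_{1}) \right| \le \left\Vert Y_{2} - Y_{1} \right\Vert_{\mathscr{S}},
\]
with optimality of the Lipschitz constant $1$ witnessed by $Y_{1}:=0$ and $Y_{2}:=\one$ via translation equivariance, exactly as in the proof of Proposition~\ref{prop:Continuity}.

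I do not expect any genuine obstacle: the only subtlety is making explicit that $\rho_{\mathscr{S}}$ qualifies as a risk measure so that the general results of Section~\ref{sec:Norm} apply, and that its canonical seminorm is exactly the norm $\left\Vert \cdot \right\Vert_{\mathscr{S}}$ used to define $L_{\mathscr{S}}$. Once these two observations are in place the proof is a single line of quotation from Propositions~\ref{prop:Finiteness} and~\ref{prop:Continuity}.
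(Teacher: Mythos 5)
Your proof is correct and follows essentially the same route as the paper, which simply invokes Proposition~\ref{prop:Continuity} (and implicitly Proposition~\ref{prop:Finiteness}); you merely make explicit the routine verification that the supremum $\rho_{\mathscr{S}}=\sup_{\sigma\in\mathscr{S}}\rho_{\sigma}$ again satisfies the axioms of a risk measure and that its associated seminorm is $\left\Vert \cdot\right\Vert _{\mathscr{S}}$.
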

\begin{proof}
The assertion follows from the more general Proposition~\ref{prop:Continuity}.
\end{proof}

\paragraph{Comparison of different $L_{\mathscr{S}}$ spaces. }

The norm of the identity 
\[
\id:\left(L_{\mathscr{S}_{1}},\left\Vert \cdot\right\Vert _{\mathscr{S}_{1}}\right)\rightarrow\left(L_{\mathscr{S}_{2}},\left\Vert \cdot\right\Vert _{\mathscr{S}_{2}}\right)
\]
is 
\[
\left\Vert \id\right\Vert =\sup_{\sigma_{2}\in\mathscr{S}_{2}}\:\inf_{\sigma_{1}\in\mathscr{S}_{1}}\:\sup_{0\le\alpha<1}\frac{\int_{\alpha}^{1}\sigma_{2}\left(u\right)\mathrm{d}u}{\int_{\alpha}^{1}\sigma_{1}\left(u\right)\mathrm{d}u},
\]
and $L_{\mathscr{S}_{1}}\subset L_{\mathscr{S}_{2}}$ iff $\left\Vert \id\right\Vert <\infty$.
This is immediate from \eqref{eq:9}, \eqref{eq:10} and 
\[
\left\Vert \id\right\Vert =\inf\left\{ c>0:\;\forall\sigma_{2}\in\mathscr{S}_{2}\;\exists\sigma_{1}\in\mathscr{S}_{1}:\,\int_{\alpha}^{1}\sigma_{2}\left(u\right)\mathrm{d}u\le c\cdot\int_{\alpha}^{1}\sigma_{1}\left(u\right)\mathrm{d}u\text{ for all }\alpha\in\left(0,1\right)\right\} .
\]

\subsection*{Examples}

We give finally two examples for which the norm $\left\Vert \cdot\right\Vert _{\mathscr{S}}$
induced by a set of spectral functions $\mathscr{S}$ coincides with
the norm $\left\Vert \cdot\right\Vert _{p}$ on $L^{p}$. Note, that
this is contrast to the space $L_{\sigma}$, as Theorem~\ref{thm:LInfty}
insures that $L_{\sigma}$ is strictly larger than $L^{p}$. 
\begin{example}[Higher order semideviation]
The $p-$semideviation risk measure for $0<\lambda\le1$ is 
\[
\rho\left(Y\right):=\mathbb{E}Y+\lambda\cdot\left\Vert \left(Y-\mathbb{E}Y\right)_{+}\right\Vert _{p}.
\]
Then $L_{\mathscr{S}}=L^{p}$, where $\mathscr{S}$ is an appropriate
spectrum to generate $\rho=\rho_{\mathscr{S}}$, and the norms $\left\Vert \cdot\right\Vert _{\mathscr{S}}$
and $\left\Vert \cdot\right\Vert _{p}$ are equivalent.\end{example}
\begin{proof}
The generating set $\mathscr{S}$ is provided in \cite{RuszczynskiShapiro2009}
and in \cite{ShapiroAlois}, the higher order semideviation risk measure
takes the alternative form 
\[
\rho\left(Y\right)=\rho_{\mathscr{S}}\left(Y\right)=\sup_{\sigma\in L^{q}}\left(1-\frac{\lambda}{\left\Vert \sigma\right\Vert _{q}}\right)\mathbb{E}Y+\frac{\lambda}{\left\Vert \sigma\right\Vert _{q}}\rho_{\sigma}\left(Y\right).
\]
It is evident that $\rho_{\mathscr{S}}\left(\left|Y\right|\right)\le\left(1-\frac{\lambda}{\left\Vert \sigma\right\Vert _{q}}\right)\left\Vert Y\right\Vert _{1}+\lambda\left\Vert Y\right\Vert _{p}\le\left(1+\lambda\right)\left\Vert Y\right\Vert _{p}$,
such that $\rho_{\mathscr{S}}$ is finite valued for $Y\in L^{p}$.
We claim that the natural domain is $L_{\mathscr{S}}=L^{p}$. For
this suppose that $Y\in L_{\mathscr{S}}\backslash L^{p}$, i.e. $\left\Vert Y\right\Vert _{1}<\infty$,
but $\left\Vert Y\right\Vert _{p}=\infty$. So it holds that 
\[
\rho_{\mathscr{S}}\left(Y\right)\ge\lambda\cdot\sup_{\sigma\in L^{q}}\frac{\rho_{\sigma}\left(Y\right)}{\left\Vert \sigma\right\Vert _{q}}=\lambda\cdot\sup_{Z\in L^{q}}\mathbb{E}Y\frac{Z}{\left\Vert Z\right\Vert _{q}}=\lambda\cdot\left\Vert Y\right\Vert _{p}=\infty
\]
by $L^{p}-L^{q}$ duality, hence $Y\notin L_{\mathscr{S}}$ and thus
$L_{\mathscr{S}}=L^{p}$. 

It follows by the open mapping theorem that the norms are equivalent.\end{proof}
\begin{example}
Theorem \ref{thm:LInfty} states that $L_{\sigma}\supsetneqq L^{\infty}$,
that is to say $L_{\sigma}$ is strictly larger than $L^{\infty}$.
This is not the case any more for the space $L_{\mathscr{S}}$: for
this consider just the risk measure 
\[
\rho\left(Y\right):=\sup_{\alpha<1}\:\AVaR_{\alpha}\left(Y\right)\qquad\left(=\esssup Y\right).
\]
Then $\rho\left(Y\right)<\infty$ if and only if $\esssup Y<\infty$,
that is $L_{\mathscr{S}}=L^{\infty}$.
\end{example}

\section{\label{sec:Summary}Summary}

In this paper we associate a norm with a risk measure in a natural
way. The risk measure is continuous with respect to the associated
norm. This point of view allows considering spectral risk measures
on its natural domain, which is a Banach space and as large as possible.
The space of natural domain is considerably larger than an accordant
$L^{p}$ space for spectral risk measures. 

As important representation theorems, as the Fenchel\textendash{}Moreau
theorem,  involve the dual space, we study the dual space as well.
Its norm can be described by a gauge functional, and the underlying
set is characterized by second order stochastic dominance constraints,
which measure the pace of growth of the random variable considered.

A consequence of the results of this paper is given by the fact that
finite valued risk measures cannot be defined on a space lager than
$L^{1}$ in a meaningful way.

\section{Acknowledgment}

The author is indebted to Prof. Alexander Shapiro (Georgia Tech) for
numerous discussions on this and other subjects, not only during the
work on this paper. In particular Theorem~\ref{thm:Shapiro} is attributed
to Prof. Shapiro.

\bibliographystyle{abbrv}
\bibliography{LiteraturAlois}

\end{document}